\documentclass[a4paper,	12pt]{article}

\usepackage[utf8]{inputenc}
\usepackage[T1]{fontenc}
\usepackage[english]{babel}

\usepackage[a4paper, margin=1in]{geometry}
\usepackage{microtype}	

\usepackage[autostyle=true]{csquotes} 
\usepackage[
	backend=biber,		
	style=numeric,		
	citestyle=numeric,	
	sorting=none,		
	giveninits=true,	
	doi=false,			
	url=false,			
	eprint=true,		
	isbn=false			
]{biblatex}

\addbibresource{bibliografia.bib}

\usepackage{amsmath, amsthm, mathtools}	

\numberwithin{equation}{section} 

\theoremstyle{plain} 
\newtheorem{theorem}[equation]{Theorem}
\newtheorem{corollary}[equation]{Corollary}
\newtheorem{lemma}[equation]{Lemma}
\newtheorem*{lemma*}{Lemma}
\newtheorem{proposition}[equation]{Proposition}

\theoremstyle{definition} 
\newtheorem{definition}[equation]{Definition}
\newtheorem{example}[equation]{Example}

\usepackage{amssymb}

\newcommand{\RR}{\mathbb{R}}

\newcommand{\from}{\colon} 
\newcommand*{\at}[2]{{#1}|_{#2}}
\newcommand*{\abs}[1]{\left|{#1}\right|}
\newcommand*{\norm}[1]{\lVert{#1}\rVert}
\providecommand*{\de}{\mathop{}\!\mathrm{d}} 

\DeclareMathOperator{\diam}{diam}
\DeclareMathOperator{\dist}{dist}

\usepackage{newtxtext}
\usepackage{newtxmath}

\usepackage{comment}


\usepackage[hidelinks]{hyperref}

\usepackage{cleveref}
\crefformat{enumi}{(#2#1#3)}			

\title{Null Distance and Temporal Functions}
\author{Andrea Nigri\thanks{This work is based on the author's Master's thesis in Mathematics at the University of Milano-Bicocca.}}
\date{}

\begin{document}
	
\maketitle

\begin{abstract}
	The notion of null distance was introduced by Sormani and Vega in~\cite{SormaniVega2016} as part of a broader program to develop a theory of metric convergence adapted to Lorentzian geometry. Given a time function $\tau$ on a spacetime $(M,g)$, the associated null distance $\hat{d}_\tau$ is constructed from and closely related to the causal structure of $M$. While generally only a semi-metric, $\hat{d}_\tau$ becomes a metric when $\tau$ satisfies the local anti-Lipschitz condition.
	
	In this work, we focus on temporal functions, that is, differentiable functions whose gradient is everywhere past-directed timelike. Sormani and Vega showed that the class of $C^1$ temporal functions coincides with that of $C^1$ locally anti-Lipschitz time functions. When a temporal function $f$ is smooth, its level sets $M_t = f^{-1}(t)$ are spacelike hypersurfaces and thus Riemannian manifolds endowed with the induced metric $h_t$. Our main result establishes that, on any level set $M_t$ where the gradient $\nabla f$ has constant norm, the null distance $\hat{d}_f$ is bounded above by a constant multiple of the Riemannian distance $d_{h_t}$.
	
	Applying this result to a smooth regular cosmological time function $\tau_g$---as introduced by Andersson, Galloway, and Howard in~\cite{AnderssonGallowayHoward1998}---we prove a theorem confirming a conjecture of Sakovich and Sormani~\cite{SakovichSormani2025}: if the diameters of the level sets $M_t = \tau_g^{-1}(t)$ shrink to zero as $t \to 0$, then the spacetime exhibits a Big Bang singularity, as defined in their work.
\end{abstract}

\clearpage
\tableofcontents

\clearpage
\section{Introduction}

Let $(M,g)$ be a spacetime, that is, a connected, time-oriented Lorentzian manifold. A continuous function $\tau \from M \to \RR$ is called a \emph{time function} if it is strictly increasing along all future-directed causal curves. The null distance associated with a time function was defined by Sormani and Vega as follows.\footnote{In \cite{SormaniVega2016}, the null distance is defined with respect to a \emph{generalized time function}, which is not necessarily continuous. In this work, we restrict our attention to continuous time functions.}

\begin{definition}
	A curve $\beta \from [a,b] \to M$ is called a \emph{piecewise causal curve} if there exists a partition $a = s_0 < \dots < s_N = b$ such that each restriction $\beta_i = \at{\beta}{[s_i,s_{i+1}]}$ is either a future-directed or a past-directed causal curve. Given a time function $\tau$ on $M$, the \emph{null length} of a piecewise causal curve $\beta$ is defined by
	\[
		\hat{L}_{\tau}(\beta) = \sum_{i=0}^{N-1} \abs{ \tau(\beta(s_{i+1})) - \tau(\beta(s_i)) }.
	\]
	The \emph{null distance} between two points $p,q \in M$ is then given by
	\[
		\hat{d}_{\tau}(p,q) = \inf\{ \hat{L}_{\tau}(\beta) \mid \beta \text{ is a piecewise causal curve from $p$ to $q$} \}.
	\]
	If $q = p$, we set $\hat{d}_{\tau}(p,q) = 0$.
\end{definition}

It follows readily that $\hat{d}_{\tau}$ is a semi-metric on $M$. However, in general, it fails to be a metric, as it may vanish for distinct points. Sormani and Vega identified a condition on the time function---the \emph{local anti-Lipschitz condition}---which is equivalent to the definiteness of $\hat{d}_{\tau}$. When $\tau$ satisfies this condition, the associated null distance $\hat{d}_{\tau}$ is a metric on $M$ that induces the manifold topology.

A notable example of a time function is a \emph{temporal function}, that is, a differentiable function $f \from M \to \RR$ whose gradient $\nabla f$ is everywhere past-directed timelike.\footnote{We slightly deviate from the standard definition of a temporal function, which requires smoothness.} Sormani and Vega proved that the class of $C^1$ temporal functions coincides with that of $C^1$ locally anti-Lipschitz time functions. Consequently, if $f$ is a $C^1$ temporal function, then the associated null distance $\hat{d}_f$ is a metric on $M$ that induces the manifold topology. Furthermore, if $f$ is smooth and the level set $M_t = f^{-1}(t)$ is nonempty for some $t \in \RR$, then $M_t$ is a spacelike hypersurface. Hence, the restriction of the ambient Lorentzian metric $g$ to $M_t$, denoted by $h_t$, is a Riemannian metric on $M_t$. The following theorem represents our main result.

\begin{theorem}
	\label{thm:inequality}
	Let $(M,g)$ be a spacetime admitting a smooth temporal function $f \from M \to \RR$. Suppose there exist $t \in \RR$ and $C > 0$ such that the level set $M_t = f^{-1}(t)$ is nonempty and the equality
	\[
		g(\nabla f, \nabla f) = -C^2
	\]
	holds at every point of $M_t$. Then, for every $p,q \in M_t$, 
	\[
		\hat{d}_f(p,q) \leq Cd_{h_t}(p,q),
	\]
	where $h_t$ denotes the induced Riemannian metric on $M_t$ and $d_{h_t}$ is the corresponding distance function.
\end{theorem}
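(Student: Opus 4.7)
The plan is to bound $\hat{d}_f(p,q)$ above by showing that every smooth curve $\gamma \from [0,L] \to M_t$ from $p$ to $q$, parametrized by $h_t$-arc length, can be shadowed by a piecewise causal ``zigzag'' curve $\beta$ whose null length approaches $CL$. Taking the infimum over $\gamma$ then yields $\hat{d}_f(p,q) \leq C d_{h_t}(p,q)$.

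The key pointwise observation is that on $M_t$ the vector field $T := -\nabla f / C$ is future-directed unit timelike and $g$-orthogonal to $M_t$. Consequently, for any unit spacelike $v \in T_p M_t$, the vectors $v \pm T(p)$ are null, and using $g(\nabla f, v) = 0$ together with $g(\nabla f, T) = C$ one finds $df(v+T) = C$ and $df(v-T) = -C$. Hence $v + T$ is future-null and $v - T$ is past-null, and along a null geodesic of affine parameter $\lambda$ starting in direction $v + T$ one has $f(\lambda) = f(0) + C\lambda + O(\lambda^2)$.

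For the construction, fix $N$, set $\epsilon = L/N$, $p_k = \gamma(k\epsilon)$, $v_k = \dot\gamma(k\epsilon)$, $T_k = T(p_k)$, and build $\beta$ as a concatenation $\beta_0^+ \ast \beta_0^- \ast \beta_1^+ \ast \beta_1^- \ast \cdots \ast \beta_{N-1}^-$, where $\beta_k^+$ is the future-null geodesic of affine parameter length $\epsilon/2$ starting at $p_k$ with initial velocity $v_k + T_k$, arriving at some $y_k$; and $\beta_k^-$ is a past-null geodesic from $y_k$ whose initial direction is chosen so that it terminates exactly at $p_{k+1}$. In a normal chart at $p_k$ the natural first-order choice of initial velocity is the parallel transport of $v_k - T_k$ along $\beta_k^+$, and in flat space this closes the zigzag exactly; a Taylor expansion shows that an adjustment of size $O(\epsilon^2)$ in the curved setting suffices, which can be produced via the implicit function theorem applied to the endpoint map. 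Each leg contributes $|\Delta f| = C\epsilon/2 + O(\epsilon^2)$ to the null length (since $f$ is monotone along a causal segment), so
\[
\hat{L}_f(\beta) = N \cdot C\epsilon + O(N\epsilon^2) = CL + O(\epsilon).
\]

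The principal obstacle I anticipate is the closure condition together with uniform control of the $O(\epsilon^2)$ remainders. One must verify that the perturbed past-directed leg actually stays in the past causal cone (not spacelike) and that the required direction-adjustment from the implicit function theorem has the claimed size; these follow from the openness of the timelike cone and from the non-degeneracy of the endpoint map at $\epsilon = 0$, where the zigzag reduces to the flat-space picture. The uniform error bounds are then obtained from compactness of $\gamma([0,L])$, smoothness of $f$, $g$, and the exponential map, and a Gronwall-type argument preventing the cumulative drift $x_k - p_k$ from growing geometrically in $N$. Once established, letting $\epsilon \to 0$ yields $\hat{d}_f(p,q) \leq CL$, and infimizing over $\gamma$ concludes the proof.
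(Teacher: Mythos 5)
Your overall strategy is the same as the paper's: approximate a near-minimizing curve $\gamma$ in $M_t$ by a piecewise causal zigzag whose legs are (approximately) null and each of which changes $f$ by $C\cdot(\text{arc length})/2$ up to higher order, then sum over a fine subdivision (the paper packages the local step as Lemma~\ref{lem:local_curve_construction} and globalizes with the Lebesgue number lemma exactly as you propose). The difference lies in how causality of the legs is enforced, and this is where your argument has a genuine gap at its crux. Your return leg must be corrected so that the zigzag closes at $p_{k+1}$, but the uncorrected direction $v_k - T_k$ is \emph{null}, i.e.\ it lies on the boundary of the past causal cone, so ``openness of the timelike cone'' gives you nothing: an arbitrarily small perturbation of a null vector can be spacelike, and the implicit function theorem does not let you choose the sign of that perturbation. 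The bookkeeping is also off: to absorb an $O(\epsilon^2)$ endpoint discrepancy along a geodesic leg of affine length $\epsilon/2$ you need a direction change of order $O(\epsilon)$, not $O(\epsilon^2)$, which makes the causality question more, not less, delicate.

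The fix --- and it is precisely the device the paper uses --- is to aim the legs strictly \emph{inside} the causal cone by a fixed margin before taking limits. The paper works in normal coordinates at $p$ and replaces $\eta$ by the slim metric $\eta_\varepsilon = -(1-\varepsilon)\,dt^2 + \sum_i (dx^i)^2$; Lemma~\ref{lem:slim_Minkowski} shows that on a small enough neighborhood every $\eta_\varepsilon$-causal coordinate segment is genuinely $g$-causal, so one can use exact straight $\eta_\varepsilon$-null segments meeting at the intersection $r_s$ of the slim past cones, and no closure correction is needed at all. The price is a factor $1/\sqrt{1-\varepsilon}$ in the null length, which disappears as $\varepsilon \to 0^+$. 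Your construction can be repaired in the same spirit (e.g.\ take $v_k - (1+c\epsilon)T_k$ as the nominal return direction, so that the corrected direction remains uniformly timelike), but as written the causality of the corrected leg is unjustified, and that is exactly the step the whole proof turns on.
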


An example of a smooth temporal function with unit gradient everywhere is a smooth \emph{regular cosmological time function} $\tau_g \from M \to (0,\infty)$, as introduced by Andersson, Galloway, and Howard in \cite{AnderssonGallowayHoward1998}. Like any locally anti-Lipschitz time function, $\tau_g$ is $1$-Lipschitz with respect to the associated null distance $\hat{d}_{\tau_g}$, and hence admits a unique $1$-Lipschitz extension $\overline{\tau_g} \from \overline{M} \to [0,\infty)$ to the metric completion of $(M, \hat{d}_{\tau_g})$. This extension allows the definition of the initial level set $\overline{\tau_g}^{-1}(0) \subseteq \overline{M}$. 

In \cite{SakovichSormani2025}, Sakovich and Sormani considered the case in which the initial level set consists of a single point, in connection with their proposal for a general class of Big Bang spacetimes. Building upon Theorem~\ref{thm:inequality}, we establish the following result, which confirms Conjecture 3.7 of~\cite{SakovichSormani2025}.

\begin{theorem}
	\label{thm:initial_level_set}
	Let $(M,g)$ be a spacetime admitting a smooth regular cosmological time function $\tau_g \from M \to (0,\infty)$. For each $t > 0$, if the level set $M_t = \tau_g^{-1}(t)$ is nonempty, denote by $h_t$ the induced Riemannian metric on $M_t$ and by $d_{h_t}$ the corresponding distance function. If
	\[
		\lim_{t \to 0^+} \diam_{d_{h_t}}(M_t) = 0,
	\]
	then the initial level set $\overline{\tau_g}^{-1}(0) \subseteq \overline{M}$ consists of a single point $p_{BB} \in \overline{M}$.
\end{theorem}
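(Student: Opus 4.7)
The plan is to apply Theorem~\ref{thm:inequality} to $\tau_g$ and then to compare points lying on different level sets by flowing along the integral curves of $\nabla \tau_g$. Since $\tau_g$ is a smooth regular cosmological time function with unit gradient, $g(\nabla \tau_g, \nabla \tau_g) = -1$ at every point of $M$, so Theorem~\ref{thm:inequality} with $C=1$ yields $\hat{d}_{\tau_g}(p,q) \le d_{h_t}(p,q)$ for every nonempty level set $M_t$ and all $p,q \in M_t$. The diameter hypothesis therefore upgrades to $\diam_{\hat{d}_{\tau_g}}(M_t) \to 0$ as $t \to 0^+$.

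To show that $\overline{\tau_g}^{-1}(0)$ is a single point, I will prove that any $x,y \in \overline{\tau_g}^{-1}(0)$ coincide. Pick Cauchy sequences $(p_n),(q_n) \subset M$ with $p_n \to x$ and $q_n \to y$ in $\overline{M}$; by continuity of $\overline{\tau_g}$, the values $s_n := \tau_g(p_n)$ and $r_n := \tau_g(q_n)$ both tend to $0$. To exploit the diameter bound I will bring both points down to a common intermediate level $M_{\epsilon_n}$, where $\epsilon_n := \tfrac{1}{2}\min(s_n, r_n)$, by flowing along the integral curve of $\nabla \tau_g$ starting at each. Since $\nabla \tau_g$ is past-directed timelike and $d\tau_g(\nabla \tau_g) = g(\nabla \tau_g, \nabla \tau_g) = -1$, each flow line is a past-directed timelike curve along which $\tau_g$ decreases at unit rate; the portion from $p_n$ to $M_{\epsilon_n}$ is then a single past-directed causal curve of null length $s_n - \epsilon_n$, and analogously for $q_n$. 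Denoting by $p_n', q_n' \in M_{\epsilon_n}$ the endpoints, the triangle inequality together with the diameter bound gives
\[
	\hat{d}_{\tau_g}(p_n, q_n) \le (s_n - \epsilon_n) + \diam_{d_{h_{\epsilon_n}}}(M_{\epsilon_n}) + (r_n - \epsilon_n),
\]
and each summand tends to $0$ as $n \to \infty$. By uniqueness of limits in $(\overline{M}, \hat{d}_{\tau_g})$, this forces $x = y$.

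The step I expect to be the main obstacle is justifying that the integral curve of $\nabla \tau_g$ from $p_n$ actually reaches the lower level set $M_{\epsilon_n}$ rather than terminating prematurely. This is where the regularity of $\tau_g$ in the sense of Andersson--Galloway--Howard enters: along any past-inextendible past-directed causal curve, $\tau_g$ must tend to $0$. If the maximal flow line of $\nabla \tau_g$ starting at $p_n$ failed to extend down to $\tau_g = \epsilon_n > 0$, it would yield a past-inextendible past-directed causal curve whose $\tau_g$-values stay bounded below by $\epsilon_n$, contradicting regularity. With this extendibility in hand, the remainder of the argument is routine metric bookkeeping via the triangle inequality.
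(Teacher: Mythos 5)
Your argument that the initial level set contains \emph{at most} one point is correct and takes a genuinely different route from the paper. The paper first proves that $M_t$ converges to $\overline{\tau_g}^{-1}(0)$ in the Hausdorff sense, with the quantitative bound $d_H(M_t,\overline{\tau_g}^{-1}(0)) \leq t$, and then invokes the $2$-Lipschitz continuity of the diameter functional with respect to the Hausdorff distance (Lemma~\ref{lem:diam_convergence}) to conclude $\diam(\overline{\tau_g}^{-1}(0)) = \lim_{t\to 0^+}\diam_{\hat{d}_{\tau_g}}(M_t) = 0$. You instead compare two arbitrary points $x,y$ of the initial level set directly, via a three-leg triangle inequality through a common low level set $M_{\epsilon_n}$; this bypasses the Hausdorff-distance machinery entirely and is, if anything, more elementary. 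Note that your connecting curves---integral curves of $\nabla\tau_g$---coincide, up to orientation and parametrization, with the generators used in the paper, since Proposition~\ref{prop:cosmo_time_unit_norm} identifies $(\nabla\tau_g)_q$ with $-\gamma_q'(\tau_g(q))$; your justification via regularity that these curves actually reach the level $\epsilon_n$ is sound, and could equally be replaced by a citation of point~\cref{item:3_generators} of Proposition~\ref{prop:generators}.

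The one genuine omission is nonemptiness: the theorem asserts that $\overline{\tau_g}^{-1}(0)$ \emph{consists of} a single point, whereas your argument only rules out its having two distinct points. The paper handles this in a separate first step: along a generator $\gamma_q$, the points $\gamma_q(t_k)$ with $t_k \to 0^+$ form a Cauchy sequence for $\hat{d}_{\tau_g}$ (since $\hat{d}_{\tau_g}(\gamma_q(t_k),\gamma_q(t_j)) \leq \abs{t_k - t_j}$ by the causality property), hence converge in the complete space $\overline{M}$ to a point whose $\overline{\tau_g}$-value is $0$. You already have every ingredient for this---your flow lines are exactly these curves and you have shown they are defined down to arbitrarily small levels---so the fix is a few lines, but it must be included for the proof to establish the stated conclusion.
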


\paragraph*{Outline.}

Section 2 introduces definitions and results from Lorentzian and metric geometry that will be used throughout. Section 3 focuses on the notion of null distance. Section 4 presents the proof of Theorem~\ref{thm:inequality}. Finally, Section 5 discusses regular cosmological time functions and establishes Theorem~\ref{thm:initial_level_set}.

\paragraph*{Acknowledgments.}

Part of this work was carried out during a research stay at the University of Vienna in the group of Prof.~Michael Kunzinger, which is partly supported by the Austrian Science Fund (FWF) [Grant DOI 10.55776/EFP6]. I am grateful to Chiara Rigoni for giving me the opportunity to join the group and for proposing the topic. I am especially thankful to Tobias Beran, whose insights and ideas have been a valuable source of inspiration. I also wish to thank Professors Christina Sormani and Anna Sakovich for their interest in this work. Finally, I am sincerely grateful to my thesis advisor, Prof.~Stefano Pigola, for welcoming my research proposal with openness and for engaging in constructive discussions that contributed to its improvement.

\clearpage
\section{Preliminaries}

\subsection{Lorentzian Background}

This subsection provides a brief overview of Lorentzian geometry, presenting definitions and results used throughout. For a comprehensive treatment of the basic theory, see~\cite{oneill}.

\subsubsection{Spacetimes}

A \emph{Lorentzian manifold} is a pair $(M,g)$, where $M$ is a smooth manifold of dimension $n + 1$ and $g$ is a Lorentzian metric on $M$, that is, a metric tensor of signature $(1,n)$. Each non-zero tangent vector $X \in TM$ is classified as \emph{timelike}, \emph{null}, or \emph{spacelike}, according to whether $g(X,X)$ is negative, zero, or positive, respectively. Timelike and null vectors are collectively referred to as \emph{causal}. The $g$-norm of a tangent vector $X \in TM$ is defined by
\[
	\norm{X}_g = \sqrt{\abs{g(X,X)}}.
\]
Causal vectors satisfy a reverse form of the Cauchy--Schwarz inequality.

\begin{proposition}[Reverse Cauchy--Schwarz Inequality]
	Let $X,Y \in TM$ be causal vectors at the same point. Then
	\[
		g(X,Y)^2 \geq g(X,X)g(Y,Y),
	\] 
	with equality if and only if $X$ and $Y$ are linearly dependent.
\end{proposition}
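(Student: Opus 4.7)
The plan is to reduce to linear algebra in the tangent space at the common point, where $g$ becomes a symmetric bilinear form of signature $(1,n)$ on a finite-dimensional vector space. I would split the argument into two cases according to whether at least one of $X,Y$ is timelike or both are null, the first case being the main one.

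For the main case, assume without loss of generality that $X$ is timelike, so $g(X,X) < 0$. I would exploit the fact that the orthogonal complement $X^{\perp}$ is a spacelike hyperplane (on which $g$ restricts to a positive definite inner product), and perform the $g$-orthogonal decomposition
\[
    Y = \frac{g(X,Y)}{g(X,X)} \, X + Y_{\perp}, \qquad Y_{\perp} \in X^{\perp}.
\]
Expanding $g(Y,Y)$ using bilinearity gives
\[
    g(Y,Y) = \frac{g(X,Y)^2}{g(X,X)} + g(Y_{\perp}, Y_{\perp}),
\]
and since $g(Y_{\perp},Y_{\perp}) \geq 0$ while $g(X,X) < 0$, multiplying by $g(X,X)$ and rearranging yields the desired inequality $g(X,Y)^2 \geq g(X,X) g(Y,Y)$. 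Equality is equivalent to $Y_{\perp} = 0$, i.e.\ to $Y$ being a scalar multiple of $X$.

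For the remaining case in which both $X$ and $Y$ are null, the inequality $g(X,Y)^2 \geq 0 = g(X,X) g(Y,Y)$ is automatic, so the only content is the equality clause. I would argue that if $g(X,Y) = 0$ then the $2$-plane $\mathrm{span}(X,Y)$ is totally null, which is incompatible with Lorentzian signature unless $X$ and $Y$ are linearly dependent; concretely, a maximal totally null subspace in a Lorentzian inner product space is $1$-dimensional, so two independent null vectors cannot be $g$-orthogonal.

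The potential friction point is really just the null-null case and, within the timelike case, the assertion that $X^{\perp}$ is spacelike---both are standard facts about Lorentzian inner product spaces that I would state explicitly and, if needed, justify by passing to an orthonormal basis adapted to $X$. Once these ingredients are in hand, the computation is a one-line identity, and the approximation of a null vector by timelike vectors is not needed because the two cases together already cover all pairs of causal vectors.
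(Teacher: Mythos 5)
The paper states this proposition without proof, treating it as a standard fact of Lorentzian linear algebra (the preliminaries defer such background to O'Neill), so there is no in-paper argument to compare against; I can only assess your proof on its own terms, and it is correct and complete. The orthogonal decomposition $Y = \tfrac{g(X,Y)}{g(X,X)}X + Y_{\perp}$ with $Y_{\perp}$ in the positive definite complement $X^{\perp}$ is exactly the classical argument when one of the vectors is timelike, and the symmetry of the statement in $X$ and $Y$ justifies your reduction to that case whenever at least one vector is timelike. Your treatment of the remaining null--null case is also right: the inequality is trivial there, and equality forces $g(X,Y)=0$, which for two linearly independent null vectors would make $\mathrm{span}(X,Y)$ a totally isotropic $2$-plane, impossible in signature $(1,n)$ since such a plane must meet the spacelike hyperplane $\vartheta^{\perp}$ in a nonzero vector. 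The two supporting facts you flag---spacelikeness of $X^{\perp}$ for timelike $X$, and one-dimensionality of maximal totally isotropic subspaces---are indeed the only ingredients requiring separate justification, and both follow at once from an orthonormal basis adapted to a timelike direction, as you indicate.
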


A \emph{time orientation} on a Lorentzian manifold $(M,g)$ is a smooth timelike vector field $\vartheta$ on $M$. A causal vector $X \in TM$ is \emph{future-directed} (respectively, \emph{past-directed}) if $g(X,\vartheta) < 0$ (respectively, $g(X,\vartheta) > 0$). The following lemma ensures that every causal vector must be either future-directed or past-directed. The proof is provided in Appendix~\ref{chap:Deferred_Proofs}.

\begin{lemma}
	\label{lem:causal_timelike}
	Let $X,T \in TM$ be tangent vectors at the same point, with $X$ causal and $T$ timelike. Then $g(X,T) \neq 0$.
\end{lemma}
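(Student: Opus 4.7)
The plan is to argue by contradiction directly from the reverse Cauchy--Schwarz inequality just stated. Suppose $g(X,T) = 0$. Since $T$ is timelike it is in particular causal, so the inequality applies to the pair $(X,T)$ and yields $g(X,X)\,g(T,T) \leq g(X,T)^2 = 0$.

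First I would observe that $g(T,T) < 0$ strictly (since $T$ is timelike) and $g(X,X) \leq 0$ (since $X$ is causal), so the product $g(X,X)\,g(T,T)$ is in fact nonnegative. Combined with the preceding inequality this forces the product to be exactly zero, and since $g(T,T) \neq 0$ we conclude $g(X,X) = 0$, i.e.\ $X$ is null.

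Next I would exploit the equality clause of the reverse Cauchy--Schwarz inequality: since now $g(X,T)^2 = g(X,X)\,g(T,T)$, the vectors $X$ and $T$ are linearly dependent. Because $X$ is causal (hence nonzero) and $T \neq 0$, this forces $X = \lambda T$ for some $\lambda \neq 0$, and therefore $g(X,X) = \lambda^2 g(T,T) < 0$. This contradicts the conclusion $g(X,X) = 0$ obtained in the previous step, completing the argument.

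I do not anticipate a serious obstacle: the whole proof is essentially a two-step use of a result the excerpt already provides. The only point that needs care is the convention that the causal classification is reserved for nonzero tangent vectors, so that $X \neq 0$ is available in the final step to rule out the trivial possibility $\lambda = 0$. An alternative route, should the reverse Cauchy--Schwarz invocation feel heavy-handed, is to note that the $g$-orthogonal complement $T^\perp$ is a spacelike hyperplane and that the hypothesis $g(X,T) = 0$ places the causal vector $X$ inside it, yielding the same contradiction; I would keep this in reserve as a backup presentation.
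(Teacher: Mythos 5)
Your proof is correct and follows essentially the same route as the paper's: both reduce to the case where $X$ is null and then invoke the equality clause of the reverse Cauchy--Schwarz inequality to force $X$ and $T$ to be linearly dependent, which is absurd. The only (harmless) variation is in how nullity of $X$ is established --- you derive $g(X,X)=0$ directly from the inequality and the signs of $g(X,X)$ and $g(T,T)$, whereas the paper argues from the signature that a vector $g$-orthogonal to a timelike vector cannot be timelike.
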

	
The following characterization of past-directed timelike vectors will be used throughout this work. The proof is provided in Appendix~\ref{chap:Deferred_Proofs}.

\begin{lemma}
	\label{lem:past_timelike}
	Let $p \in M$, and let $h_p$ be any Euclidean inner product on $T_pM$. Then the following conditions on a tangent vector $T \in T_pM$ are equivalent:
	\begin{enumerate}
		\item $T$ is past-directed timelike.
		\item For every future-directed causal vector $X \in T_pM$, $g(X,T) > 0$.
		\item There exists a constant $C > 0$ such that, for all future-directed causal vectors $X \in T_pM$,
		\[
			g(T,X) \geq C\norm{X}_{h_p}.
		\]
	\end{enumerate}
\end{lemma}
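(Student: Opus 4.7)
The plan is to prove the cyclic chain of implications $(1)\Rightarrow(2)\Rightarrow(3)\Rightarrow(1)$.

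For $(1)\Rightarrow(2)$, I would exploit Lemma~\ref{lem:causal_timelike}: since $T$ is timelike, the continuous function $X\mapsto g(X,T)$ never vanishes on the set of future-directed causal vectors at $p$. This set is path-connected (in an orthonormal frame adapted to $\vartheta_p$ it is the open convex cone $\{X^{0} > 0,\ (X^{0})^{2} \geq \sum_i (X^i)^2\}$), so $g(\cdot, T)$ has constant sign on it. To determine this sign, I would evaluate at the single test vector $X = -T$, which is future-directed timelike because $T$ is past-directed timelike; this gives $g(-T, T) = -g(T, T) > 0$, and the desired conclusion follows.

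For $(2)\Rightarrow(3)$, I would argue by compactness. The set $K := \{X \in T_pM : X \text{ is future-directed causal and } \|X\|_{h_p} = 1\}$ is bounded in $T_pM$ and closed: any sequential limit $X$ automatically satisfies $\|X\|_{h_p} = 1$, $g(X,X) \leq 0$, and $g(X,\vartheta_p) \leq 0$, and Lemma~\ref{lem:causal_timelike} rules out the boundary case $g(X,\vartheta_p) = 0$ since $X \neq 0$ would then be causal and non-orthogonal to the timelike $\vartheta_p$. Hence $K$ is compact, and by (2) the continuous function $X\mapsto g(T,X)$ is strictly positive on $K$, so it attains a minimum $C > 0$. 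Homogeneity in $X$ then yields the claimed bound for every nonzero future-directed causal vector.

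The delicate implication is $(3)\Rightarrow(1)$. Applying (3) to $X = \vartheta_p$ immediately yields $g(T,\vartheta_p) > 0$, which settles the past-directed sign as soon as $T$ is known to be causal. The real work is to rule out $T = 0$, $T$ null, and $T$ spacelike. The case $T=0$ is immediate, and the null case is handled by taking $X = -T$, which is future-directed null because $g(T,\vartheta_p) > 0$: then (3) forces $-g(T,T) \geq C\|T\|_{h_p} > 0$, contradicting $g(T,T) = 0$. The main obstacle is the spacelike case, where one must exhibit a future-directed \emph{causal} vector $X$ satisfying $g(T,X) \leq 0$. I would produce it by a Gram--Schmidt-style projection of $\vartheta_p$ onto $T^{\perp}$: set $Y := \vartheta_p - \frac{g(\vartheta_p, T)}{g(T,T)}\, T$, so that $g(Y,T) = 0$, and use $g(T,T) > 0$ together with $g(\vartheta_p,\vartheta_p) < 0$ to compute
\[
    g(Y,Y) \;=\; g(\vartheta_p,\vartheta_p) - \frac{g(\vartheta_p,T)^{2}}{g(T,T)} \;<\; 0,
\]
so $Y$ is timelike, and $g(Y,\vartheta_p) = g(Y,Y) < 0$ shows it is future-directed. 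Testing (3) against $X = Y$ yields $0 = g(T,Y) \geq C\|Y\|_{h_p} > 0$, the desired contradiction.
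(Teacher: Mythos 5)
Your proof is correct, and it follows the same cyclic scheme $(1)\Rightarrow(2)\Rightarrow(3)\Rightarrow(1)$ as the paper, with the middle implication handled identically in substance (the paper phrases the compactness argument as a sequential contradiction, you phrase it as attainment of a minimum on the compact set $K$; both need, and you explicitly supply, the observation that a nonzero limit of future-directed causal vectors is again future-directed causal via Lemma~\ref{lem:causal_timelike}). The two genuine differences are these. For $(1)\Rightarrow(2)$ the paper argues by contradiction, perturbing $T$ to $T' = T + \lambda\vartheta$ so as to make it $g$-orthogonal to the offending $X$ while remaining timelike, and then invoking Lemma~\ref{lem:causal_timelike}; you instead use that same lemma to get non-vanishing of $g(\cdot,T)$ on the connected cone of future-directed causal vectors and fix the sign by the single test vector $X=-T$. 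Your version is arguably cleaner and avoids having to verify that $T'$ stays timelike (a small point the paper glosses over); the only cosmetic slip is calling the cone \enquote{open} when it contains its null boundary, but convexity is all you use. For $(3)\Rightarrow(1)$ the paper simply asserts that a zero, null, or spacelike $T$ admits a future-directed causal vector $X$ with $g(T,X)=0$; you actually construct it (taking $X=-T$ in the null case and the projection $Y=\vartheta_p - \tfrac{g(\vartheta_p,T)}{g(T,T)}T$ in the spacelike case, correctly checking $g(Y,Y)=g(Y,\vartheta_p)<0$), so your write-up is more complete than the paper's at that step.
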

	
A \emph{spacetime} is a connected, time-oriented Lorentzian manifold. Unless otherwise specified, the pair $(M,g)$ will denote a spacetime.

\begin{example}
	The most basic example of a spacetime is the $(n+1)$-dimensional \emph{Minkowski spacetime}:
	\[
		\mathbb{M}^{n+1} = (\RR^{n+1}, \eta),
	\]
	where
	\[
		\eta = -dt^2 + d(x^1)^2 + \dots + d(x^n)^2
	\]
	is the Minkowski metric expressed in Cartesian coordinates. The time orientation is given by the coordinate vector field $\partial_t$.
	
	Throughout this work, we denote points in $\mathbb{M}^{n+1}$ by $p = (t_p, x_p)$, where $t_p \in \mathbb{R}$ is the temporal component and $x_p \in \mathbb{R}^n$ the spatial one. Moreover, given two points $p, q \in \mathbb{M}^{n+1}$, we denote by $[p, q]$ the straight-line segment from $p$ to $q$, regarded as a parametrized curve.
\end{example}

\begin{example}
	A \emph{generalized Robertson--Walker (GRW) spacetime} is a spacetime of the form 
	\[
		(M,g) = (I \times S, -dt^2 + f(t)^2h),
	\]
	where $I \subseteq \RR$ is an open interval, $(S,h)$ is an $n$-dimensional connected Riemannian manifold, and $f \from I \to \RR$ is a smooth, strictly positive function called the \emph{scale factor}. The time orientation is given by the coordinate vector field $\partial_t$. 
	
	Note that Minkowski spacetime $\mathbb{M}^{n+1}$ arises as a special case of a GRW spacetime by taking $I = \RR$, $f \equiv 1$, and $(S,h)$ equal to the $n$-dimensional Euclidean space.
\end{example}

\subsubsection{Causality}

A smooth curve $\gamma \from I \to M$ is called \emph{timelike} (respectively, \emph{null}, \emph{spacelike}, \emph{causal}) if its tangent vector $\gamma'$ is everywhere timelike (respectively, null, spacelike, causal). A causal curve is \emph{future-directed} (respectively, \emph{past-directed}) if its tangent vector is everywhere future-directed (respectively, past-directed). 

The \emph{causal future} of a point $p \in M$ is defined by
\[
	J^+(p) = \{ q \in M \mid \text{there exists a future-directed causal curve from }p \text{ to }q \} \cup \{p\}.
\]
Similarly, the \emph{chronological future} of $p$ is given by
\[
	I^+(p) = \{ q \in M \mid \text{there exists a future-directed timelike curve from }p \text{ to }q \}.
\]
The \emph{causal past} $J^-(p)$ and the \emph{chronological past} $I^-(p)$ are defined analogously, replacing future-directed curves with past-directed ones. For any $p,q \in M$, we write $p \leq q$ if and only if $q \in J^+(p)$, or equivalently, $p \in J^-(q)$.

\begin{proposition}
	\label{prop:futures_open}
	For every point $p \in M$, the chronological future $I^+(p)$ and the chronological past $I^-(p)$ are open subsets of $M$.
\end{proposition}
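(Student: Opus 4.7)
By the symmetry given by time reversal, it suffices to prove that $I^+(p)$ is open for every $p \in M$; the openness of $I^-(p)$ then follows by the same argument after reversing the time orientation. Fix $q \in I^+(p)$ and a future-directed timelike curve $\gamma \from [0,1] \to M$ with $\gamma(0) = p$ and $\gamma(1) = q$. The plan is to exhibit an open neighborhood $U$ of $q$ contained in $I^+(p)$.

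First I would select a convex normal neighborhood $V$ of $q$---whose existence for the Levi-Civita connection is classical~\cite{oneill}---so that on $V$ each exponential map $\exp_x$, $x \in V$, is a diffeomorphism from an open subset of $T_xM$ onto $V$. By continuity of $\gamma$, I can pick $s \in [0,1)$ with $\gamma([s,1]) \subseteq V$ and set $r = \gamma(s)$; then $\gamma|_{[s,1]}$ is a future-directed timelike curve from $r$ to $q$ lying entirely in $V$. Invoking the standard fact that in such a convex normal neighborhood $V$ a point $q' \in V$ admits a timelike curve from $r$ lying in $V$ if and only if $\exp_r^{-1}(q')$ is future-directed timelike, I obtain that $X_0 := \exp_r^{-1}(q)$ belongs to the set
\[
	\mathcal{T}^+_r = \{\, X \in T_rM \mid g(X,X) < 0,\ g(X,\vartheta) < 0 \,\},
\]
which is open in $T_rM$. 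Choosing an open neighborhood $W$ of $X_0$ inside $\mathcal{T}^+_r \cap \exp_r^{-1}(V)$ and setting $U = \exp_r(W)$, I obtain an open neighborhood of $q$ in $V$. Every $q' \in U$ is then joined to $r$ by a future-directed timelike radial geodesic, and concatenating this geodesic with $\gamma|_{[0,s]}$---and smoothing the corner at $r$, which is possible because both incoming and outgoing tangents lie in the open future-timelike cone---produces a smooth future-directed timelike curve from $p$ to $q'$. Hence $U \subseteq I^+(p)$, as required.

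The main technical obstacle is the classical convex-normal-neighborhood fact cited above: its proof rests on the analysis of the exponential map together with a variational/homotopy argument that deforms an arbitrary timelike curve in $V$ to the radial geodesic between its endpoints, and it is not purely topological. It is nevertheless standard and available in~\cite{oneill}. The remaining ingredients---openness of $\mathcal{T}^+_r$ inside $T_rM$, the diffeomorphism property of $\exp_r$, and the smoothing of a piecewise future-directed timelike curve at a single corner---are routine.
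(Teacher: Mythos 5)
The paper states this proposition as standard background and gives no proof of its own (it defers to~\cite{oneill} for the basic theory), so there is nothing internal to compare against. Your argument is the classical textbook proof---convex normal neighborhoods, the characterization of $I^+(r,V)$ as the image under $\exp_r$ of the open future timelike cone, and concatenation along the original curve---and it is correct. You also rightly flag the one point where this paper's conventions demand extra care: since causal curves here are smooth rather than piecewise smooth, the concatenation at $r$ must be smoothed while staying timelike and future-directed, which works precisely because both one-sided tangents at $r$ lie in the open future cone. The only imprecision is in your statement of the normal-neighborhood lemma (``admits a timelike curve from $r$'' should read ``future-directed timelike curve''), but this does not affect the argument.
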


\subsubsection{Time Functions}

A continuous function $\tau \from M \to \RR$ is called a \emph{time function} if it is strictly increasing along all future-directed causal curves. 

Recall that, if a function $f \from M \to \RR$ is differentiable at a point $p \in M$, then its gradient at $p$ is the unique tangent vector satisfying
\[
	g((\nabla f)_p, X) = \mathrm{d}_pf(X) \quad \text{for all } X \in T_pM.
\]
The following lemma shows that if a time function is differentiable at a point, then its monotonicity along future-directed causal curves imposes constraints on the causal character of its gradient. The proof is provided in Appendix~\ref{chap:Deferred_Proofs}.

\begin{lemma}
	\label{lem:time_functions_gradient}
	Let $\tau$ be a time function on $M$. If $\tau$ is differentiable at a point $p \in M$, then the gradient vector $(\nabla \tau)_p$ is either past-directed causal or zero.
\end{lemma}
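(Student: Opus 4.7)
The strategy is first to prove the bilinear inequality
\[
	g((\nabla \tau)_p, X) \geq 0 \quad \text{for every future-directed causal } X \in T_pM,
\]
and then to translate this one-sided control into a statement about the causal character of $(\nabla \tau)_p$ via a short case analysis on its causal type.

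For the first step, I would start with the strictly timelike case. Given a future-directed timelike $X \in T_pM$, I would pick any smooth curve $\gamma \from (-\varepsilon, \varepsilon) \to M$ with $\gamma(0) = p$ and $\gamma'(0) = X$ (for example, a straight line in a chart). Since being future-directed timelike is an open condition on tangent vectors and the time orientation $\vartheta$ is continuous, shrinking $\varepsilon$ ensures that $\gamma'(s)$ remains future-directed timelike for all $s \in (-\varepsilon, \varepsilon)$, so $\gamma$ is a future-directed timelike curve. Because $\tau$ is a time function, $\tau \circ \gamma$ is strictly increasing on $(-\varepsilon, \varepsilon)$, whence $(\tau \circ \gamma)'(0) \geq 0$, i.e. $\mathrm{d}_p\tau(X) = g((\nabla \tau)_p, X) \geq 0$. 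Since the set of future-directed causal vectors is the closure of the set of future-directed timelike vectors in $T_pM$, continuity of $g((\nabla\tau)_p, \cdot)$ extends the inequality to all future-directed causal $X$.

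For the second step, set $V = (\nabla \tau)_p$ and assume $V \neq 0$. I would rule out the three offending possibilities in turn. If $V$ were future-directed timelike, taking $X = V$ would give $g(V, V) < 0$, a contradiction. If $V$ were future-directed null, then for any future-directed timelike $T$ one has $g(V, T) < 0$ (a consequence of the reverse Cauchy--Schwarz inequality, or equivalently of Lemma~\ref{lem:past_timelike} applied to $-V$), again a contradiction. If $V$ were spacelike, I would choose a $g$-orthonormal frame $(e_0, e_1, \dots, e_n)$ at $p$ with $e_0$ future-directed timelike, write $V = v^0 e_0 + \sum_{i=1}^n v^i e_i$ with $(v^0)^2 < \sum_i (v^i)^2$, and consider the future-directed null vector
\[
	X = e_0 - \frac{1}{\sqrt{\sum_i (v^i)^2}} \sum_{i=1}^n v^i e_i,
\]
for which $g(V, X) = -v^0 - \sqrt{\sum_i (v^i)^2} < 0$, contradicting the inequality. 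Hence $V$ must be past-directed causal.

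The main obstacle I anticipate is the spacelike case of the case analysis: unlike the two future-directed possibilities, which are immediate from reverse Cauchy--Schwarz, ruling out spacelike $V$ requires explicitly producing a future-directed causal test vector $X$ with $g(V, X) < 0$, which is most cleanly done in an adapted orthonormal frame as above. Everything else (the curve construction, the continuity extension, and the dualization of $\mathrm{d}_p\tau$ to the gradient) is routine.
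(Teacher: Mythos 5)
Your proof is correct and follows essentially the same route as the paper: both first derive $g((\nabla \tau)_p, T) \geq 0$ for future-directed timelike $T$ from the monotonicity of $\tau$ along a timelike curve through $p$, and both then exclude the spacelike case by exhibiting a future-directed causal test vector that pairs negatively with the gradient (the paper perturbs a timelike vector orthogonal to $(\nabla\tau)_p$, while you write down an explicit null vector in an adapted orthonormal frame after extending the inequality to the causal cone by continuity). One small slip in your null case: Lemma~\ref{lem:past_timelike} should be applied to $-T$ (which is past-directed timelike), not to $-V$ (which is only past-directed null and hence does not satisfy condition (1) of that lemma); the fact you need, namely $g(V,T)<0$ for future-directed null $V$ and future-directed timelike $T$, is nevertheless correct and follows from that corrected application.
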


As a consequence, the gradient of a differentiable time function is either past-directed causal or zero at every point. However, it is not necessarily timelike everywhere. A differentiable function whose gradient is everywhere past-directed timelike is called a \emph{temporal function}. Every temporal function is, in particular, a time function.

\begin{proposition}
	\label{lem:gradient_implies_time_function}
	Let $f \from M \to \RR$ be a temporal function. Then $f$ is a time function.
\end{proposition}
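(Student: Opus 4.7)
The plan is to show that $f$ is continuous and that for every future-directed causal curve $\gamma \from [a,b] \to M$, the composition $F = f \circ \gamma$ is strictly increasing. Continuity of $f$ is immediate from differentiability, so the real content lies in the monotonicity statement.

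First I would fix a future-directed causal curve $\gamma \from [a,b] \to M$ and form $F = f \circ \gamma$. Since $\gamma$ is smooth and $f$ is differentiable on $M$, the chain rule yields that $F$ is differentiable on $[a,b]$ with
\[
	F'(s) = \mathrm{d}_{\gamma(s)} f(\gamma'(s)) = g\bigl((\nabla f)_{\gamma(s)}, \gamma'(s)\bigr),
\]
by the definition of the gradient recalled just before Lemma~\ref{lem:time_functions_gradient}.

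Next I would invoke the hypothesis that $f$ is temporal: the vector $(\nabla f)_{\gamma(s)}$ is past-directed timelike at every point, while $\gamma'(s)$ is future-directed causal by assumption on $\gamma$. The equivalence (1)$\Leftrightarrow$(2) of Lemma~\ref{lem:past_timelike} then gives $g((\nabla f)_{\gamma(s)}, \gamma'(s)) > 0$ for every $s \in [a,b]$, so $F'(s) > 0$ on the whole interval. A differentiable real-valued function on an interval with strictly positive derivative is strictly increasing (by the mean value theorem), hence $f(\gamma(a)) < f(\gamma(b))$, which is precisely what is needed for $f$ to be a time function.

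There is no real obstacle here; the statement is essentially a translation of the pointwise inequality provided by Lemma~\ref{lem:past_timelike} into a monotonicity statement along a curve via the chain rule. The only point worth flagging is that $f$ is only assumed differentiable rather than $C^1$, but this is harmless since the chain rule still applies to produce $F'$, and the mean value theorem needs only differentiability.
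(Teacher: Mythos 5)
Your proof is correct and follows essentially the same route as the paper: compute $(f\circ\gamma)' = g(\nabla f, \gamma')$ by the chain rule and conclude positivity from Lemma~\ref{lem:past_timelike}, hence strict monotonicity. The extra remarks on continuity and the mean value theorem are fine but add nothing beyond the paper's argument.
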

\begin{proof}
	Let $\alpha$ be a future-directed causal curve. Since $\nabla f$ is past-directed timelike and $\alpha'$ is future-directed casual, Lemma~\ref{lem:past_timelike} implies that
	\[
		(f\circ\alpha)' = g(\nabla f, \alpha') > 0.
	\]
	It follows that $f$ is strictly increasing along $\alpha$, and thus a time function. 
\end{proof}

\begin{example}
	\label{ex:GRW_temporal_functions}
	In a GRW spacetime
	\[
		(M,g) = (I \times S, -dt^2 + f(t)^2h),
	\]
	consider a differentiable function of the form $\tau(t,x)= \phi(t)$, where $\phi \from I \to \RR$ satisfies $\phi'(t) > 0$ for all $t \in I$. Then $\tau$ is a temporal function. Indeed, its gradient is given by
	\[
		\nabla \tau = -\phi'(t)\partial_t,
	\]
	which is everywhere past-directed timelike.
\end{example}

We now briefly recall some standard causality conditions. A spacetime $(M,g)$ is said to be \emph{causal} if it contains no closed causal curves, and \emph{stably causal} if this property persists under small perturbations of the metric $g$. Every stably causal spacetime is, in particular, \emph{strongly causal}, meaning that for every point $p \in M$ and every open neighborhood $U$ of $p$, there exists an open neighborhood $V \subseteq U$ of $p$ such that every causal curve with endpoints in $V$ is entirely contained in $U$. The following is a fundamental result in Lorentzian geometry.

\begin{theorem}
	\label{thm:strongly_causal}
	Let $(M,g)$ be a spacetime. Then the following conditions are equivalent:
	\begin{enumerate}
		\item $(M,g)$ admits a time function.
		\item $(M,g)$ admits a smooth temporal function.
		\item $(M,g)$ is stably causal.
	\end{enumerate}
\end{theorem}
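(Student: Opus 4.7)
The plan is to establish the cycle of implications $(2) \Rightarrow (1) \Rightarrow (3) \Rightarrow (2)$, invoking the classical theorems of Hawking and of Bernal and Sánchez for the two non-trivial directions. The implication $(2) \Rightarrow (1)$ is immediate from Proposition~\ref{lem:gradient_implies_time_function}: every temporal function is a time function, smoothness being irrelevant for this step.

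For $(1) \Rightarrow (3)$, given a time function $\tau \from M \to \RR$, I would construct a small widening $\tilde{g}$ of the metric $g$, with strictly larger light cones, such that $\tau$ remains strictly monotonic along all future-directed $\tilde{g}$-causal curves. Locally, at each point $p \in M$ one can choose a cone slightly larger than the $g$-causal cone but still contained in the open half-space on which $\mathrm{d}_p\tau$ is strictly positive; such a widening exists because $\mathrm{d}_p\tau$ is positive on the compact set of $g$-unit future-directed causal vectors. A partition-of-unity argument then patches these local choices into a globally defined Lorentzian metric $\tilde{g}$ with wider cones than $g$, for which $\tau$ is still a time function. Since the existence of a time function precludes closed causal curves, $(M, \tilde{g})$ is causal, establishing stable causality.

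The implication $(3) \Rightarrow (2)$ is the heart of the theorem and corresponds to the celebrated Bernal--Sánchez theorem. The strategy proceeds in two stages. First, one produces a continuous time function via a volume construction: fix a finite Borel measure $\mu$ on $M$ equivalent on compact sets to Lebesgue measure in local charts, and define a function of the form
\[
	\tau(p) = \mu\bigl(I^-(p)\bigr) - \mu\bigl(I^+(p)\bigr),
\]
using the widened metric provided by stable causality to establish continuity and strict monotonicity along future-directed causal curves. Second, one regularizes $\tau$ into a smooth function $f$ whose gradient is everywhere past-directed timelike. This is carried out locally by convolving $\tau$ against bump functions in coordinates carefully adapted to the cone structure, and globally by a weighted partition of unity chosen so that $\mathrm{d}f$ pairs strictly positively with every future-directed causal vector at every point.

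The main obstacle is this final smoothing step: a naive regularization of a continuous time function need not have timelike gradient, because the light cones vary with the base point and the mollification scale must be tuned to these variations. Overcoming this requires the subtle local construction of Bernal and Sánchez, which for the present work I would invoke as a black box rather than reproduce, since the theorem is used here purely as foundational input from the causality theory of Lorentzian manifolds.
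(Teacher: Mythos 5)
The paper does not prove this theorem at all: it is quoted as foundational input with a citation to \cite[Thm.~4.12]{ChruścielGrantMinguzzi2016}, so any complete argument lies beyond what the paper attempts. Your outline follows the standard route (trivial implication, cone-widening, Hawking volume functions plus Bernal--S\'anchez smoothing), and invoking Bernal--S\'anchez as a black box for $(3)\Rightarrow(2)$ is perfectly reasonable in this context.

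However, your argument for $(1)\Rightarrow(3)$ has a genuine gap. In this paper a time function is only \emph{continuous}: it is strictly increasing along future-directed causal curves but need not be differentiable at any point. Your cone-widening construction chooses, at each $p$, a cone slightly larger than the $g$-causal cone but contained in the half-space where $\mathrm{d}_p\tau>0$; this requires $\mathrm{d}_p\tau$ to exist (and to be positive on the closed future cone), i.e.\ it requires $\tau$ to be a $C^1$ temporal function. What your argument actually proves is $(2)\Rightarrow(3)$, not $(1)\Rightarrow(3)$. The implication ``continuous time function $\Rightarrow$ stably causal'' is precisely the historically delicate part of this equivalence: it was a folklore claim for decades, and the known proofs go through heavy machinery --- e.g.\ Minguzzi's identification of stable causality with $K$-causality, or the construction (as in Bernal--S\'anchez and in Chru\'sciel--Grant--Minguzzi) showing that a spacetime with a continuous time function already admits a smooth temporal one --- none of which can be replaced by a local mollification or partition-of-unity patching at the level of the merely continuous $\tau$ (naive smoothing can destroy strict monotonicity along causal curves, which is exactly the difficulty you acknowledge in the other direction). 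To repair the write-up, either cite the literature for $(1)\Rightarrow(3)$ as well, or reorganize the cycle so that the cone-widening is applied only to the smooth temporal function, with the passage from $(1)$ to something differentiable delegated to the cited deep results. A further minor point: for a single metric the volume function $\mu\bigl(I^-(p)\bigr)-\mu\bigl(I^+(p)\bigr)$ need not be continuous or strictly increasing under stable causality alone; Hawking's construction averages $\mu\bigl(I^-_{g_a}(p)\bigr)$ over a one-parameter family of widened metrics, which is what your sketch should say if you keep that stage explicit.
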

\begin{proof}
	See~\cite[Thm.~4.12]{ChruścielGrantMinguzzi2016}.
\end{proof}

\subsubsection{Lorentzian Distance}

The \emph{Lorentzian length} of a causal curve $\gamma \from I \to M$ is defined by
\[
	L_g(\gamma) = \int_I \sqrt{ -g(\gamma'(s), \gamma'(s)) } \, ds.
\]
The \emph{Lorentzian distance} between two points $p,q \in M$ is then given by
\[
	d_g(p,q) = \sup\{ L_g(\gamma) \mid \gamma \text{ is a future-directed causal curve from }p \text{ to }q \}.
\]
If $q = p$ or $q \notin J^+(p)$, we set $d_g(p,q) = 0$. Despite its name, the Lorentzian distance is not a metric, as it may vanish for distinct points and is generally not symmetric, except in trivial cases.

\subsection{Metric Background}

This subsection introduces definitions and results from metric geometry that will be used throughout. We adopt much of the notation and terminology of~\cite{bbi}, to which we also refer for many of the proofs.

\subsubsection{Metric Spaces}

\begin{definition}
	Let $X$ be a set. A function $d \from X \times X \to \RR \cup \{\infty\}$ is called a \emph{metric} on $X$ if the following conditions hold for all $x,y,z \in X$:
	\begin{enumerate}
		\item Positiveness: $d(x,y) \geq 0$, with equality if and only if $x = y$.
		\item Symmetry: $d(x,y) = d(y,x)$.
		\item Triangle inequality: $d(x,z) \leq d(x,y) + d(y,z)$.
	\end{enumerate}	
	The condition that $d(x,y) = 0$ implies $x = y$ is referred to as definiteness. If $d$ satisfies all the conditions above except definiteness, it is called a \emph{semi-metric} on $X$.
\end{definition}

A \emph{metric space} is a pair $(X,d)$, where $X$ is a set and $d$ is a metric on it. Unless multiple metrics on the same set $X$ are considered, we will omit an explicit reference to the metric and write "a metric space $X$" instead of "a metric space $(X,d)$". 

For any $x \in X$ and $r > 0$, we denote by $B_r(x)$ the open ball of radius $r$ centered at $x$, that is, the set of points $y \in X$ such that $d(x,y) < r$. Given a nonempty subset $S \subseteq X$, its diameter is defined by
\[
	\diam(S) = \sup\{ d(x,y) \mid x,y \in S \}.
\]
If $\diam(S) < \infty$, the set $S$ is said to be bounded. Given two nonempty subsets $A,B \subseteq X$, their distance is defined by
\[
	\dist(A,B) = \inf\{ d(a,b) \mid a \in A,\ b \in B \}.
\]

The following result, known as Lebesgue's number lemma, will be used throughout this work.

\begin{proposition}[Lebesgue's Number Lemma]
	Let $X$ be a compact metric space, and let $\{ U_{\alpha} \}_{\alpha \in A}$ be an open cover of $X$. Then there exists $\varepsilon > 0$ such that every ball of radius $\varepsilon$ in $X$ is entirely contained in some $U_{\alpha}$.
\end{proposition}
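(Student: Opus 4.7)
The plan is to argue by contradiction, leveraging the equivalence between compactness and sequential compactness for metric spaces. Suppose that no $\varepsilon > 0$ has the required property. Then, for each integer $n \geq 1$, the radius $\varepsilon = 1/n$ fails, so one can select a point $x_n \in X$ such that the ball $B_{1/n}(x_n)$ is not entirely contained in any element of the cover.

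Next, I would apply sequential compactness to extract a subsequence $x_{n_k} \to x_\infty \in X$. Since $\{U_\alpha\}_{\alpha \in A}$ covers $X$, the limit point lies in some $U_{\alpha_0}$, and by openness of $U_{\alpha_0}$ there exists $r > 0$ with $B_r(x_\infty) \subseteq U_{\alpha_0}$. For all sufficiently large $k$, one simultaneously has $d(x_{n_k}, x_\infty) < r/2$ and $1/n_k < r/2$. The triangle inequality then yields $B_{1/n_k}(x_{n_k}) \subseteq B_r(x_\infty) \subseteq U_{\alpha_0}$, contradicting the defining property of $x_{n_k}$.

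No serious obstacle arises in this argument: the only ingredients are sequential compactness, the openness of the cover elements, and the triangle inequality. The single step worth a brief verification is the inclusion $B_{1/n_k}(x_{n_k}) \subseteq B_r(x_\infty)$, which is immediate because any $y$ with $d(y, x_{n_k}) < 1/n_k$ satisfies
\[
    d(y, x_\infty) \leq d(y, x_{n_k}) + d(x_{n_k}, x_\infty) < \tfrac{r}{2} + \tfrac{r}{2} = r.
\]
An alternative route would be to introduce the function $f(x) = \sup\{r > 0 \mid B_r(x) \subseteq U_\alpha \text{ for some } \alpha\}$, show that it is lower semicontinuous and strictly positive on $X$, and conclude by compactness that it attains a positive infimum; but this approach requires more auxiliary work, so the subsequence argument above seems preferable.
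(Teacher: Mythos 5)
Your proof is correct and complete: the contradiction argument via sequential compactness, the choice of $r$ with $B_r(x_\infty) \subseteq U_{\alpha_0}$, and the triangle-inequality verification of $B_{1/n_k}(x_{n_k}) \subseteq B_r(x_\infty)$ are all standard and airtight. Note that the paper does not supply its own proof of this proposition---it simply cites \cite[Thm.~1.6.11]{bbi}---so your self-contained argument is, if anything, more than what the paper provides.
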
 
\begin{proof}
	See~\cite[Thm.~1.6.11]{bbi}.
\end{proof}

\subsubsection{Hausdorff Distance}

For any nonempty $S \subseteq X$ and $r > 0$, we denote by $U_r(S)$ the tubular open neighborhood of radius $r$ around $S$, that is, the set of points $x \in X$ such that $\dist(x,S) < r$. Equivalently, 
\[
	U_r(S) = \bigcup_{x \in S} B_r(x).
\]
Given two nonempty subsets $A,B \subseteq X$, their \emph{Hausdorff distance} is defined by
\[
	d_H(A,B) = \inf\{ r > 0 \mid A \subseteq U_r(B),\ B \subseteq U_r(A) \}.
\]
It is straightforward to verify the following equivalent expression:
\[
	d_H(A,B) = \max\{ \sup\nolimits_{a \in A}\dist(a,B),\ \sup\nolimits_{b \in B}\dist(b,A) \}.
\]

\begin{proposition}
	Let $X$ be a metric space. Then the Hausdorff distance is a semi-metric on the set of all nonempty subsets of $X$. Moreover, if $A$ and $B$ are closed nonempty subsets of $X$ such that $d_H(A,B) = 0$, then $A = B$. 
\end{proposition}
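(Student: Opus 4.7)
The plan is to verify the three axioms of a semi-metric in turn, working from the second (equivalent) expression for $d_H$, and then derive the closed-set definiteness statement from the standard fact that $\dist(x,S) = 0$ iff $x \in \overline{S}$.

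Non-negativity and symmetry are immediate: $d_H$ is defined as a supremum of non-negative quantities, and the defining expression is manifestly symmetric in $A$ and $B$. All the substantive work is in the triangle inequality $d_H(A,C) \leq d_H(A,B) + d_H(B,C)$. I would fix arbitrary $a \in A$, $b \in B$, $c \in C$ and start from the triangle inequality in $X$:
\[
	d(a,c) \leq d(a,b) + d(b,c).
\]
Taking the infimum over $c \in C$ on the left and on the right-most term yields $\dist(a,C) \leq d(a,b) + \dist(b,C)$; bounding $\dist(b,C) \leq \sup_{b' \in B}\dist(b',C) \leq d_H(B,C)$ and then taking the infimum over $b \in B$ gives
\[
	\dist(a,C) \leq \dist(a,B) + d_H(B,C) \leq d_H(A,B) + d_H(B,C).
\]
Taking the supremum over $a \in A$ bounds $\sup_{a \in A}\dist(a,C)$ by the desired quantity; repeating the argument with the roles of $A$ and $C$ interchanged bounds $\sup_{c \in C}\dist(c,A)$ by the same quantity, and the maximum of these two is exactly $d_H(A,C)$.

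For the closed-set statement, suppose $A$ and $B$ are closed with $d_H(A,B) = 0$. Then from the alternative expression, $\sup_{a \in A}\dist(a,B) = 0$, so $\dist(a,B) = 0$ for every $a \in A$. Since $B$ is closed, $\dist(a,B) = 0$ forces $a \in B$, hence $A \subseteq B$; the symmetric argument, using closedness of $A$, gives $B \subseteq A$, and therefore $A = B$.

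No step is really an obstacle here; the only mild subtlety is to be careful when passing infima through sums (doing so one variable at a time, as above, rather than all at once), and to remember that the bound on $\dist(b,C)$ must be made uniform in $b$ before the infimum over $b \in B$ is taken, which is why the bound $d_H(B,C)$ rather than $\dist(b,C)$ has to be inserted at that stage.
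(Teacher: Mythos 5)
Your argument is correct and complete: the paper itself does not prove this proposition but defers to~\cite[Prop.~7.3.3]{bbi}, and your proof is precisely the standard textbook argument given there — triangle inequality via the equivalent $\max$--$\sup$--$\dist$ expression, passing infima through one variable at a time, and definiteness on closed sets from $\dist(a,B)=0 \implies a \in \overline{B} = B$. The only (trivial) point left implicit is that $d_H(A,A)=0$, which is needed for the semi-metric axioms as the paper states them and follows immediately from $\dist(a,A)=0$ for all $a \in A$.
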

\begin{proof}
	See~\cite[Prop.~7.3.3]{bbi}.
\end{proof}

We denote by $\mathcal{F}(X)$ the set of all nonempty closed subsets of $X$ (from the French \emph{fermé}), and by $\mathcal{F}_b(X)$ the set of all nonempty closed and bounded subsets of $X$. The previous proposition shows that the Hausdorff distance is a metric on $\mathcal{F}(X)$. Moreover, it is straightforward to verify that $\mathcal{F}_b(X)$ is a closed subset of $\mathcal{F}(X)$.

\begin{lemma}
	Let $X$ be a metric space, and let $\diam \from \mathcal{F}_b(X) \to [0,\infty)$ denote the function that assigns to each set its diameter. Then $\diam$ is $2$-Lipschitz with respect to the Hausdorff distance. 
\end{lemma}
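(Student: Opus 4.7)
The plan is to prove the two-sided bound $|\diam(A) - \diam(B)| \le 2\,d_H(A,B)$ for all $A, B \in \mathcal{F}_b(X)$, which immediately gives the $2$-Lipschitz property. Since $d_H$ is defined as an infimum, it will be convenient to work with an arbitrary $r > d_H(A,B)$ and then let $r$ approach $d_H(A,B)$ at the end.

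The key step is an application of the triangle inequality. Fix $r > d_H(A,B)$, so that $A \subseteq U_r(B)$. Given any $a_1, a_2 \in A$, choose points $b_1, b_2 \in B$ with $d(a_i, b_i) < r$ for $i = 1, 2$. Then
\[
	d(a_1, a_2) \leq d(a_1, b_1) + d(b_1, b_2) + d(b_2, a_2) < 2r + \diam(B).
\]
Taking the supremum over $a_1, a_2 \in A$ yields $\diam(A) \leq 2r + \diam(B)$. By the symmetric argument using $B \subseteq U_r(A)$, we also obtain $\diam(B) \leq 2r + \diam(A)$. Both diameters are finite because $A, B \in \mathcal{F}_b(X)$, so these inequalities can be rearranged to give $|\diam(A) - \diam(B)| \leq 2r$. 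Letting $r \to d_H(A,B)^+$ completes the proof.

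I do not expect any significant obstacle here; the main thing to be careful about is that $d_H(A,B)$ itself might not be attained by any specific $r$ in the defining infimum, which is why one argues with $r > d_H(A,B)$ and passes to the limit rather than directly substituting $r = d_H(A,B)$. A side remark worth mentioning is that finiteness of $d_H(A,B)$ on $\mathcal{F}_b(X)$ follows from boundedness of $A$ and $B$ together with the triangle inequality, so the inequality $|\diam(A) - \diam(B)| \leq 2\,d_H(A,B)$ is meaningful throughout $\mathcal{F}_b(X) \times \mathcal{F}_b(X)$.
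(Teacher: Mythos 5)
Your proof is correct and follows essentially the same route as the paper's: the paper also works with $A \subseteq U_{d+\varepsilon}(B)$ and $B \subseteq U_{d+\varepsilon}(A)$ for $\varepsilon > 0$ and lets $\varepsilon \to 0$, merely leaving implicit the triangle-inequality computation that you spell out. Your version fills in exactly the step the paper summarizes with ``this implies,'' so there is nothing to change.
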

\begin{proof}
	Let $A,B \in \mathcal{F}_b(X)$, and set $d = d_H(A,B)$. By the definition of the Hausdorff distance, for every $\varepsilon > 0$ we have 
	\[
		A \subseteq U_{d + \varepsilon}(B) \quad \text{and} \quad B \subseteq U_{d + \varepsilon}(A).
	\]
	This implies
	\[
		\abs{ \diam(A) - \diam(B) } \leq 2( d_H(A,B) + \varepsilon ).
	\]
	The conclusion follows by letting $\varepsilon \to 0$.
\end{proof}

The following lemma follows directly from the continuity of $\diam$ with respect to the Hausdorff distance and the fact that $\mathcal{F}_b(X)$ is closed in $\mathcal{F}(X)$.

\begin{lemma}
	\label{lem:diam_convergence}
	Let $X$ be a metric space, and let $A_j \in \mathcal{F}_b(X)$ be a sequence of nonempty closed and bounded subsets. If $A_j$ converges to a set $A \in \mathcal{F}(X)$ in the Hausdorff sense, then $A \in \mathcal{F}_b(X)$ and
	\[
		\diam(A) = \lim_{j \to \infty} \diam(A_j).
	\]
\end{lemma}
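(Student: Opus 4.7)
The plan is to turn the two hints in the paragraph preceding the lemma into a short three-step argument, using the previous lemma (that $\diam$ is $2$-Lipschitz on $\mathcal{F}_b(X)$ with respect to $d_H$) as the main tool.

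First I would establish that $A$ is bounded. Since $A_j$ converges in $(\mathcal{F}(X),d_H)$, the sequence is Cauchy, so for every $\varepsilon>0$ there exists $N$ with $d_H(A_i,A_j)<\varepsilon$ for all $i,j\geq N$. Applying the $2$-Lipschitz bound from the previous lemma to the bounded sets $A_i,A_j\in\mathcal{F}_b(X)$ gives $|\diam(A_i)-\diam(A_j)|\leq 2\varepsilon$, so $\{\diam(A_j)\}$ is a Cauchy sequence in $\RR$ and hence bounded by some $M<\infty$. Now pick any $j$ large enough that $d_H(A,A_j)<1$, so $A\subseteq U_1(A_j)$. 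For any $x,y\in A$, choose $x',y'\in A_j$ with $d(x,x')<1$ and $d(y,y')<1$; then
\[
d(x,y)\leq d(x,x')+d(x',y')+d(y',y)<\diam(A_j)+2\leq M+2.
\]
Taking the supremum yields $\diam(A)\leq M+2<\infty$, hence $A\in\mathcal{F}_b(X)$.

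With $A\in\mathcal{F}_b(X)$ in hand, I would then invoke the $2$-Lipschitz bound directly on the pair $(A_j,A)$:
\[
|\diam(A_j)-\diam(A)|\leq 2\,d_H(A_j,A)\xrightarrow{j\to\infty}0,
\]
which gives the desired identity $\diam(A)=\lim_{j\to\infty}\diam(A_j)$.

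No step here poses a real obstacle; the only subtlety is that $\diam$ was only shown to be $2$-Lipschitz on $\mathcal{F}_b(X)$, not on $\mathcal{F}(X)$, so one cannot apply that bound directly to the pair $(A_j,A)$ before first knowing that $A$ itself is bounded. That is why the boundedness of $A$ must be verified first, via the Cauchy argument combined with a direct estimate from the inclusion $A\subseteq U_1(A_j)$, rather than by a bare appeal to Lipschitz continuity.
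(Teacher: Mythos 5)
Your argument is correct and follows exactly the route the paper indicates (the paper offers no written proof, only the remark that the lemma follows from the $2$-Lipschitz continuity of $\diam$ on $\mathcal{F}_b(X)$ together with the closedness of $\mathcal{F}_b(X)$ in $\mathcal{F}(X)$, which is precisely your two-step structure). The only cosmetic point is that your Cauchy argument for the sequence $\{\diam(A_j)\}$ is unnecessary: a single index $j$ with $d_H(A,A_j)<1$ already gives $\diam(A)\leq \diam(A_j)+2<\infty$.
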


\subsubsection{Metric Completion}

\begin{proposition}
	Let $X$ be a metric space. Then there exist a complete metric space $\overline{X}$ and an isometric embedding of $X$ as a dense subset of $\overline{X}$. Moreover, any other complete metric space containing an isometric copy of $X$ as a dense subset is isometric to $\overline{X}$.
\end{proposition}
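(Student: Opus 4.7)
The plan is to carry out the classical Cauchy-sequence construction. I set $\overline{X}$ to be the set of equivalence classes of Cauchy sequences $(x_n)$ in $X$, where $(x_n) \sim (y_n)$ if and only if $d(x_n,y_n) \to 0$, and I equip it with
\[
	\overline{d}\bigl([x_n],[y_n]\bigr) = \lim_{n \to \infty} d(x_n,y_n).
\]
A short estimate using the triangle inequality in $X$ shows that the real sequence $d(x_n,y_n)$ is Cauchy in $\RR$, hence convergent, and that the limit does not depend on the chosen representatives. The three metric axioms for $\overline{d}$ then follow from those of $d$ by passing to the limit, with definiteness built directly into the equivalence relation.

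Next I define the embedding $\iota \from X \to \overline{X}$ by sending $x$ to the class of the constant sequence $(x,x,\dots)$; this is an isometry by construction, and density follows by approximating an arbitrary class $\xi = [x_n]$ with $\iota(x_N)$ for large $N$. The main step is completeness. Given a Cauchy sequence $(\xi_j)$ in $\overline{X}$, I use density to pick $y_j \in X$ with $\overline{d}(\iota(y_j), \xi_j) < 1/j$. The triangle inequality then gives
\[
	d(y_j, y_k) = \overline{d}(\iota(y_j), \iota(y_k)) \leq 1/j + 1/k + \overline{d}(\xi_j, \xi_k),
\]
so $(y_j)$ is Cauchy in $X$. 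A second application of the triangle inequality shows $\xi_j \to [y_n]$ in $\overline{X}$, so $\overline{X}$ is complete.

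For uniqueness, let $(Y,d_Y)$ be any complete metric space containing an isometric dense copy $j(X)$ of $X$. I extend $j \circ \iota^{-1} \from \iota(X) \to j(X)$ to a map $\Phi \from \overline{X} \to Y$ by $\Phi([x_n]) = \lim_{n \to \infty} j(x_n)$; this limit exists because $(j(x_n))$ is Cauchy in the complete space $Y$. Continuity of $d_Y$ shows that $\Phi$ is an isometry, and the density of $j(X)$ combined with the completeness of $\overline{X}$ yield surjectivity. I expect no real obstacle beyond careful bookkeeping around the fact that $d$ may take the value $\infty$: one notes that Cauchy sequences in $X$ automatically have finite mutual distance from some index onward, so all the limits above make sense, and the various independence-of-representative checks are routine.
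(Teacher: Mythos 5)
Your proof is correct and is the standard Cauchy-sequence construction of the completion; the paper itself gives no argument here, deferring to \cite[Thm.~1.5.10]{bbi}, which proceeds in essentially the same way. Your remark about distances possibly equal to $\infty$ is the right thing to flag, since the paper's definition of a metric allows this, and Cauchy sequences indeed eventually have finite mutual distances so all the limits are well defined.
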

\begin{proof}
	See~\cite[Thm.~1.5.10]{bbi}.
\end{proof}

The space $\overline{X}$ is called the \emph{metric completion} of $X$. In what follows, we identify $X$ with its isometric copy in $\overline{X}$, writing $X \subseteq \overline{X}$ and using the same symbol $d$ to denote both the original metric on $X$ and its extension to $\overline{X}$.

\begin{proposition}
	Let $X$ be a metric space, and let $S \subseteq X$ be a dense subset. Let $Y$ be a complete metric space, and let $f \from S \to Y$ be a $C$-Lipschitz function for some $C > 0$. Then there exists a unique $C$-Lipschitz function $\tilde{f} \from X \to Y$ such that $\at{\tilde{f}}{S} = f$.
\end{proposition}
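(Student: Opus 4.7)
The plan is the standard Lipschitz extension construction, in which the density of $S$ in $X$ and the completeness of $Y$ together pin down the value of $\tilde{f}$ at each new point.

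First I would fix $x \in X$ and choose a sequence $(s_n)_{n \in \NN} \subseteq S$ with $s_n \to x$, which exists by density. The sequence is Cauchy in $X$, and the $C$-Lipschitz condition on $f$ forces $(f(s_n))_{n \in \NN}$ to be Cauchy in $Y$, since $d_Y(f(s_n), f(s_m)) \leq C\, d(s_n, s_m)$. Because $Y$ is complete, this sequence converges to some point of $Y$, which I would provisionally declare to be $\tilde{f}(x)$.

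The next step is to verify that $\tilde{f}(x)$ is well defined, i.e.\ independent of the approximating sequence: if $(s_n)$ and $(t_n)$ both converge to $x$, then $d(s_n,t_n) \to 0$, and the $C$-Lipschitz bound on $f$ gives $d_Y(f(s_n), f(t_n)) \to 0$, so the two limits coincide. Taking the constant sequence $s_n \equiv x$ when $x \in S$ shows $\at{\tilde{f}}{S} = f$. For the Lipschitz estimate, given $x, y \in X$ I would choose $s_n \to x$ and $t_n \to y$, apply $d_Y(f(s_n), f(t_n)) \leq C\, d(s_n, t_n)$ for every $n$, and pass to the limit using the continuity of the distance functions in their arguments to obtain $d_Y(\tilde{f}(x), \tilde{f}(y)) \leq C\, d(x,y)$.

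Uniqueness is then immediate: any $C$-Lipschitz (in particular, continuous) extension of $f$ must agree with $\tilde{f}$ on the dense subset $S$, hence on all of $X$ by continuity. There is no real obstacle here; the only substantive point is that completeness of $Y$ is essential to guarantee the existence of the limit defining $\tilde{f}(x)$, while density of $S$ in $X$ is what ensures every point of $X$ can be reached.
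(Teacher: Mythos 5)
Your proof is correct and complete; the paper itself only cites \cite[Prop.~1.5.9]{bbi} for this statement, and your argument is precisely the standard extension-by-limits construction given there, with all the necessary checks (well-definedness, the Lipschitz estimate by passing to the limit, and uniqueness via continuity on a dense set).
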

\begin{proof}
	See~\cite[Prop.~1.5.9]{bbi}.
\end{proof}

\begin{corollary}
	\label{cor:extension_completion}
	Let $f \from X \to Y$ be a $C$-Lipschitz function from a metric space $X$ to a complete metric space $Y$. Then $f$ admits a unique $C$-Lipschitz extension $\overline{f} \from \overline{X} \to Y$ to the metric completion of $X$.
\end{corollary}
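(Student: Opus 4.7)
The plan is to apply the preceding extension proposition directly, taking the metric completion $\overline{X}$ as the ambient metric space and $X$ itself as the dense subset $S$ in its statement. First I would recall, from the proposition defining $\overline{X}$, that the identification $X \subseteq \overline{X}$ is an isometric embedding with dense image; in particular, the extended metric on $\overline{X}$ restricts to the original metric on $X$, so that $f \from X \to Y$ remains $C$-Lipschitz when viewed as a map from the dense subset $X \subseteq \overline{X}$ to $Y$. Combined with the completeness of $Y$ assumed in the hypothesis, these facts verify all the assumptions of the previous proposition for the pair $(\overline{X}, X)$ and the map $f$.

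Invoking that proposition then produces a unique $C$-Lipschitz map $\overline{f} \from \overline{X} \to Y$ with $\at{\overline{f}}{X} = f$, which is exactly the required extension. There is no real obstacle here: the corollary is nothing more than a specialization of the dense-extension statement to the canonical pair $(\overline{X}, X)$, and no additional construction or estimate is needed beyond the passage from a generic dense subset to the canonical dense inclusion of $X$ into its completion.
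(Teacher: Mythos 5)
Your argument is correct and is exactly the intended derivation: the paper states this as an immediate corollary of the dense-subset extension proposition, obtained by taking $\overline{X}$ as the ambient space and $X$ as the canonical dense subset. Nothing further is needed.
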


\clearpage
\section{Null Distance} 

This section focuses on the notion of null distance, as introduced by Sormani and Vega in~\cite{SormaniVega2016}. Definitions, key results, and proofs are drawn from their work, sometimes adapted for the present context.

\subsection{Definitions}

\begin{definition}
	A curve $\beta \from [a,b] \to M$ is called a \emph{piecewise causal curve} if there exists a partition $a = s_0 < \dots < s_N = b$ such that each restriction $\beta_i = \at{\beta}{[s_i,s_{i+1}]}$ is either a future-directed or a past-directed causal curve. 
\end{definition}

\begin{definition}
	Let $\tau$ be a time function on $M$. The \emph{null length} of a piecewise causal curve $\beta \from [a,b] \to M$, with breakpoints $a = s_0 < \dots < s_N = b$, is defined by
	\[
		\hat{L}_{\tau}(\beta) = \sum_{i=0}^{N-1} \abs{ \tau(\beta(s_{i+1})) - \tau(\beta(s_i)) }.
	\]
	The \emph{null distance} between two points $p,q \in M$ is then given by
	\[
		\hat{d}_{\tau}(p,q) = \inf\{ \hat{L}_{\tau}(\beta) \mid \beta \text{ is a piecewise causal curve from $p$ to $q$} \}.
	\]
	If $q = p$, we set $\hat{d}_{\tau}(p,q) = 0$.
\end{definition}

The following lemma ensures that the null distance between any two points is well-defined.

\begin{lemma}
	\label{lem:null_distance_well-definiteness}
	For any pair of points $p,q \in M$, there exists a piecewise causal curve from $p$ to $q$.
\end{lemma}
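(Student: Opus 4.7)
The plan is to use connectedness of $M$ together with local openness of the "joinable by a piecewise causal curve" relation. Define, for $p, q \in M$, the relation $p \sim q$ to hold if either $p = q$ or there exists a piecewise causal curve from $p$ to $q$. I would first check that $\sim$ is an equivalence relation: reflexivity is by definition; symmetry follows by reparametrizing a piecewise causal curve in reverse, which turns each future-directed piece into a past-directed one and vice versa, producing another piecewise causal curve; transitivity follows by concatenating two piecewise causal curves at a shared endpoint (merging their partitions into a common one).

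Next I would show that each equivalence class $[p]$ is open in $M$. For this, use the time-orientation vector field $\vartheta$ to produce a future-directed timelike integral curve through $p$; any point $r$ on this curve distinct from $p$ lies in $I^+(p)$, so $I^+(p) \neq \emptyset$. By \cref{prop:futures_open}, $I^-(r)$ is open, and it contains $p$. Pick an open neighborhood $V$ of $p$ with $V \subseteq I^-(r)$. For any $q \in V$ there is a future-directed timelike (hence causal) curve from $q$ to $r$; concatenating this with the future-directed timelike curve from $p$ to $r$ traversed in reverse yields a piecewise causal curve from $p$ to $q$. Hence $V \subseteq [p]$, so $[p]$ is open.

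Finally, since $M$ is the disjoint union of the open equivalence classes and $M$ is connected (by the definition of a spacetime), there is exactly one equivalence class. Therefore any two points $p, q \in M$ are related, and for $p \neq q$ this produces the desired piecewise causal curve.

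The only delicate point is making sure $I^+(p)$ is nonempty so that \cref{prop:futures_open} can be applied; this is handled by integrating the time-orientation vector field $\vartheta$ on a small interval, which gives a genuine future-directed timelike curve starting at $p$. Everything else reduces to straightforward concatenation and reparametrization of piecewise causal curves.
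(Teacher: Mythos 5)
Your proof is correct, but it takes a genuinely different route from the paper. The paper fixes an arbitrary smooth curve $\gamma$ from $p$ to $q$, covers $[a,b]$ by the preimages $\gamma^{-1}(I^-(x))$, and applies the Lebesgue number lemma to extract a fine partition whose consecutive points can be joined by going up to some $x_i$ and back down; the piecewise causal curve is thus built as a ``zig-zag'' shadowing $\gamma$. You instead argue by connectedness: the relation ``joinable by a piecewise causal curve'' is an equivalence relation whose classes are open (each $p$ lies in $I^-(r)$ for a point $r$ slightly to its future along an integral curve of $\vartheta$, and every $q \in I^-(r)$ is reached by going up from $p$ to $r$ and back down to $q$), so connectedness of $M$ forces a single class. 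Both arguments hinge on Proposition~\ref{prop:futures_open} and on the same local ``up-then-down'' construction; yours is shorter and avoids both the choice of an auxiliary curve and the Lebesgue number lemma, while the paper's version has the minor advantage of producing a piecewise causal curve that stays uniformly close to a prescribed smooth curve, which can be useful if one later wants the connecting curve to remain inside a given open set. For the statement as written, your argument is complete.
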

\begin{proof}
	Let $\gamma \from [a,b] \to M$ be a smooth curve with $\gamma(a) = p$ and $\gamma(b) = q$. Since 
	\[
		M = \bigcup_{x \in M} I^-(x),
	\]
	and each set $I^-(x)$ is open by Proposition~\ref{prop:futures_open}, the collection $\{ \gamma^{-1}(I^-(x)) \}_{x \in M}$ forms an open cover of the compact interval $[a,b]$. By the Lebesgue's number lemma, there exists $\varepsilon > 0$ such that every subinterval of $[a,b]$ of length less than $\varepsilon$ is contained in some $\gamma^{-1}(I^-(x))$.
	
	Let $a \leq t_0 < \dots < t_{N-1} \leq b$ be a finite collection of points such that the open intervals $( t_i - \varepsilon/2,\ t_i + \varepsilon/2 )$ cover $[a,b]$. For each $i = 1,\dots,N-1$, choose a point $s_i \in [a,b]$ such that
	\[
		t_{i-1} < s_i < t_i, \quad s_i - t_{i-1} < \frac{\varepsilon}{2}, \quad \text{and} \quad t_i - s_i < \frac{\varepsilon}{2}.
	\]
	Define $s_0 = a$ and $s_N = b$. Then each interval $[s_i, s_{i+1}]$ has length less than $\varepsilon$, and is therefore contained in $\gamma^{-1}(I^-(x_i))$ for some $x_i \in M$. It follows that both endpoints $\gamma(s_i)$ and $\gamma(s_{i+1})$ lie in $I^-(x_i)$. Consequently, there exist a future-directed causal curve from $\gamma(s_i)$ to $x_i$ and a past-directed causal curve from $x_i$ to $\gamma(s_{i+1})$. Their concatenation yields a piecewise causal curve from $\gamma(s_i)$ to $\gamma(s_{i+1})$, with a breakpoint at $x_i$. The full concatenation of these segments is a piecewise causal curve from $\gamma(s_0) = p$ to $\gamma(s_N) = q$.
\end{proof}

\subsection{Basic Properties}

\begin{lemma}
	\label{lem:null_distance_basic_properties}
	Let $\tau$ be a time function on $M$, and let $\beta \from [a,b] \to M$ be a piecewise causal curve from $p$ to $q$, with breakpoints $a = s_0 < \dots < s_N = b$. Then:
	\begin{enumerate}
		\item \label{item:1_null_distance_basic_properties} $\hat{L}_{\tau}(\beta) = \tau(q) - \tau(p)$  if and only if $\beta$ is future-directed causal.
		\item \label{item:2_null_distance_basic_properties} The following inequalities hold:
		\[
			\hat{L}_{\tau}(\beta) \geq \max_{s \in [a,b]} \tau(\beta(s)) - \min_{s \in [a,b]} \tau(\beta(s)) \geq \abs{\tau(q) - \tau(p)}.
		\]
		\item \label{item:3_null_distance_basic_properties} If $\tau$ is $C^1$ along $\beta$, then 
		\[
			\hat{L}_{\tau}(\beta) = \int_a^b \abs{(\tau\circ\beta)'} \de s = \sum_{i=0}^{N-1} \int_{s_i}^{s_{i+1}} \abs{(\tau\circ\beta_i)'} \de s.
		\]
	\end{enumerate}
\end{lemma}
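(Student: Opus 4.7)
The plan is to extract all three items from a single elementary fact: because $\tau$ is a time function, on each causal piece $\beta_i$ the real-valued function $\tau \circ \beta_i$ is strictly monotone on $[s_i,s_{i+1}]$, strictly increasing if $\beta_i$ is future-directed and strictly decreasing if $\beta_i$ is past-directed. Set $\Delta_i = \tau(\beta(s_{i+1})) - \tau(\beta(s_i))$, so that $\hat{L}_\tau(\beta) = \sum_i |\Delta_i|$. Strict monotonicity translates into the sign information $\Delta_i > 0$ exactly on the future-directed pieces and $\Delta_i < 0$ exactly on the past-directed pieces.

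For item \cref{item:1_null_distance_basic_properties}, I would use the pointwise bound $|\Delta_i| \ge \Delta_i$, with equality iff $\Delta_i \ge 0$, that is, iff $\beta_i$ is future-directed. Summing and telescoping gives $\hat{L}_\tau(\beta) \ge \tau(q) - \tau(p)$, with equality iff every piece is future-directed, i.e.\ iff $\beta$ is future-directed causal. For item \cref{item:2_null_distance_basic_properties}, the second inequality is immediate since $\tau(p),\tau(q)$ lie in the range of $\tau \circ \beta$. For the first inequality, strict monotonicity on each piece forces $\tau \circ \beta$ to attain its maximum and minimum on $[a,b]$ at breakpoints, say at $s_{i^\ast}$ and $s_{j^\ast}$ respectively. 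Assuming without loss of generality $i^\ast < j^\ast$, telescoping between these indices and applying the ordinary triangle inequality yields
\[
	\max_{s\in[a,b]} \tau(\beta(s)) - \min_{s\in[a,b]} \tau(\beta(s))
	= \Bigl| \sum_{i=i^\ast}^{j^\ast - 1} \Delta_i \Bigr|
	\le \sum_{i=i^\ast}^{j^\ast - 1} |\Delta_i|
	\le \hat{L}_\tau(\beta).
\]

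For item \cref{item:3_null_distance_basic_properties}, the $C^1$ hypothesis makes $(\tau\circ\beta_i)'$ continuous on $[s_i,s_{i+1}]$, and monotonicity forces this derivative to have constant sign there. Hence $|\Delta_i| = \bigl|\int_{s_i}^{s_{i+1}} (\tau\circ\beta_i)' \, \de s\bigr| = \int_{s_i}^{s_{i+1}} |(\tau\circ\beta_i)'| \, \de s$, and summing over $i$ gives both of the claimed expressions (the finitely many breakpoints form a null set, so the sum of piece integrals coincides with $\int_a^b |(\tau\circ\beta)'| \, \de s$).

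There is no substantive obstacle: the only mild subtlety is that $\tau\circ\beta$ need not be $C^1$ globally because the tangent of $\beta$ may jump at the breakpoints, which is exactly why item \cref{item:3_null_distance_basic_properties} is stated with a decomposition over pieces. Everything else is bookkeeping built on the strict monotonicity of $\tau$ along each causal segment.
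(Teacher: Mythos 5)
Your proof is correct and takes essentially the same approach as the paper's: strict monotonicity of $\tau\circ\beta$ on each causal piece, telescoping combined with the triangle inequality (and its equality case) for the first two items, and the constant sign of $(\tau\circ\beta_i)'$ on each subinterval for the third. You merely spell out the bookkeeping that the paper's terse proof leaves implicit.
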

\begin{proof}
	\cref{item:1_null_distance_basic_properties} follows directly from the definition of null length and the triangle inequality.
	
	\cref{item:2_null_distance_basic_properties} follows from the triangle inequality and the strict monotonicity of $\tau\circ\beta$ on each subinterval $[s_i, s_{i+1}]$, which implies that its maximum and minimum over $[a,b]$ are attained at some of the breakpoints.
	
	\cref{item:3_null_distance_basic_properties} follows from
	\begin{align*}
		\hat{L}_{\tau}(\beta) &= \sum_{i=0}^{N-1} \abs{ \tau(\beta(s_{i+1})) - \tau(\beta(s_i)) } \\
		&= \sum_{i=0}^{N-1} \abs{ \int_{s_i}^{s_{i+1}} (\tau\circ\beta_i)' \de s } \\
		&= \sum_{i=0}^{N-1} \int_{s_i}^{s_{i+1}} \abs{(\tau\circ\beta_i)'} \de s,
	\end{align*}
	where the last equality holds since $(\tau\circ\beta_i)'$ does not change sign on $[s_i,s_{i+1}]$, due to the causal character of $\beta_i$.
\end{proof}

The following proposition follows readily from the definition of null distance and Lemma~\ref{lem:null_distance_well-definiteness}.

\begin{proposition}
	\label{prop:null_distance_semi-metric}
	Let $\tau$ be a time function on $M$. Then $\hat{d}_{\tau}$ is a semi-metric on $M$.
\end{proposition}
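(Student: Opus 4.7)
The plan is to verify that $\hat{d}_\tau$ satisfies non-negativity, vanishing on the diagonal, symmetry, and the triangle inequality, while leaving definiteness aside (as the claim is only that $\hat{d}_\tau$ is a semi-metric). First I would observe that $\hat{d}_\tau(p,q)$ is well-defined as an element of $[0,\infty]$: the infimum is taken over a nonempty set by Lemma~\ref{lem:null_distance_well-definiteness}, and every null length is a sum of absolute values, hence non-negative. Finiteness of $\hat{d}_\tau(p,q)$ will follow from the explicit piecewise causal curve constructed in Lemma~\ref{lem:null_distance_well-definiteness}, whose null length is a finite real number.

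The identity $\hat{d}_\tau(p,p) = 0$ holds by definition. For symmetry, given a piecewise causal curve $\beta \from [a,b] \to M$ from $p$ to $q$ with breakpoints $a = s_0 < \dots < s_N = b$, I would consider the reverse curve $\tilde{\beta}(t) = \beta(a+b-t)$. Each reversed segment is still causal (merely with its time-orientation flipped), so $\tilde{\beta}$ is a piecewise causal curve from $q$ to $p$, and since the definition of null length involves only the absolute differences $\lvert \tau(\beta(s_{i+1})) - \tau(\beta(s_i)) \rvert$, we have $\hat{L}_\tau(\tilde{\beta}) = \hat{L}_\tau(\beta)$. Taking the infimum over all such curves yields $\hat{d}_\tau(q,p) \leq \hat{d}_\tau(p,q)$, and the reverse inequality is symmetric.

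For the triangle inequality, fix $p,q,r \in M$ and $\varepsilon > 0$. I would select piecewise causal curves $\beta_1 \from [a_1, b_1] \to M$ from $p$ to $q$ and $\beta_2 \from [a_2, b_2] \to M$ from $q$ to $r$ with
\[
    \hat{L}_\tau(\beta_1) \leq \hat{d}_\tau(p,q) + \varepsilon/2, \qquad \hat{L}_\tau(\beta_2) \leq \hat{d}_\tau(q,r) + \varepsilon/2.
\]
Their concatenation $\beta$, reparametrized on a single interval, is piecewise causal (the union of both partitions, with $q$ appearing as a breakpoint, furnishes an admissible partition), and null length is clearly additive under concatenation, so $\hat{L}_\tau(\beta) = \hat{L}_\tau(\beta_1) + \hat{L}_\tau(\beta_2)$. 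Hence $\hat{d}_\tau(p,r) \leq \hat{d}_\tau(p,q) + \hat{d}_\tau(q,r) + \varepsilon$, and letting $\varepsilon \to 0$ concludes the argument. I do not anticipate any real obstacle; the only point requiring minor care is verifying that concatenating two piecewise causal curves yields a piecewise causal curve, which is immediate from the definition.
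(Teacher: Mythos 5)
Your proof is correct and is exactly the routine verification the paper has in mind when it states that the proposition ``follows readily from the definition of null distance and Lemma~\ref{lem:null_distance_well-definiteness}'': non-negativity and finiteness from that lemma, symmetry by reversing each causal piece, and the triangle inequality by concatenating near-optimal curves. No gaps; the paper simply omits these details.
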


In general, $\hat{d}_{\tau}$ is not a metric, as it may vanish for distinct points (See~\cite[Prop.~3.4]{SormaniVega2016}). We will later introduce the local anti-Lipschitz condition on the time function, which is equivalent to the definiteness of $\hat{d}_{\tau}$. For now, we observe that the following lemma shows that non-definiteness can only occur for pairs of points lying on the same level set of $\tau$.

\begin{lemma}
	\label{lem:time_functions_1Lip}
	Let $\tau$ be a time function on $M$. Then, for every $p,q \in M$,
	\[
		\hat{d}_{\tau}(p,q) \geq \abs{\tau(q) - \tau(p)}.
	\]
	Consequently, definiteness may fail only for pairs of points lying on the same level set of $\tau$:
	\[
		\hat{d}_{\tau}(p,q) = 0 \implies \tau(p) = \tau(q).
	\]
\end{lemma}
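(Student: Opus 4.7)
The plan is essentially to read off the inequality from part~\cref{item:2_null_distance_basic_properties} of Lemma~\ref{lem:null_distance_basic_properties} and then pass to the infimum. No new geometric input is needed beyond what has already been established, so I expect no real obstacle; the argument is a one-liner followed by a trivial corollary.

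\medskip

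First, I would fix $p,q \in M$ and let $\beta \from [a,b] \to M$ be any piecewise causal curve from $p$ to $q$. By \cref{item:2_null_distance_basic_properties} of Lemma~\ref{lem:null_distance_basic_properties}, we immediately obtain
\[
    \hat{L}_{\tau}(\beta) \geq \abs{\tau(q) - \tau(p)}.
\]
Since the right-hand side depends only on the endpoints $p$ and $q$ and not on the curve $\beta$, taking the infimum over all piecewise causal curves $\beta$ from $p$ to $q$ yields
\[
    \hat{d}_{\tau}(p,q) \geq \abs{\tau(q) - \tau(p)}.
\]
The existence of at least one such curve, which guarantees that the infimum is not taken over the empty set, is ensured by Lemma~\ref{lem:null_distance_well-definiteness}. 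The case $p = q$ is trivial, since both sides vanish by convention.

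\medskip

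For the second assertion, I would simply observe that if $\hat{d}_{\tau}(p,q) = 0$, then the inequality just established forces $\abs{\tau(q) - \tau(p)} \leq 0$, and hence $\tau(p) = \tau(q)$. This shows that any two distinct points at which $\hat{d}_{\tau}$ fails to be definite must lie on a common level set of $\tau$, as claimed.
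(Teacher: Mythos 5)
Your proposal is correct and follows exactly the paper's argument: apply part~\cref{item:2_null_distance_basic_properties} of Lemma~\ref{lem:null_distance_basic_properties} to any piecewise causal curve and take the infimum, with the second assertion an immediate consequence. The additional remarks about non-emptiness of the curve set and the case $p=q$ are fine but not needed beyond what the paper already establishes.
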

\begin{proof}
	Let $\beta$ be a piecewise causal curve from $p$ to $q$. By Lemma~\ref{lem:null_distance_basic_properties}, we have
	\[
		\hat{L}_{\tau}(\beta) \geq \abs{\tau(q) - \tau(p)}.
	\]
	The conclusion follows by taking the infimum over all such curves.
\end{proof}

As a consequence of the previous lemma, we obtain the following causality property of the null distance.
\begin{lemma}
	\label{lem:null_distance_causality_property}
	Let $\tau$ be a time function on $M$. Then $\hat{d}_{\tau}$ satisfies the causality property:
	\[
		p \leq q \implies \hat{d}_{\tau}(p,q) = \tau(q) - \tau(p).
	\]
\end{lemma}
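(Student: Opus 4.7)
The plan is to prove the equality by establishing both inequalities, using the two immediately preceding lemmas. First I would dispose of the trivial case $p = q$, where both sides equal zero by definition. So assume $p \neq q$ with $p \leq q$; by definition of $\leq$, there exists a future-directed causal curve $\gamma$ from $p$ to $q$.

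For the upper bound, I would observe that $\gamma$ is in particular a piecewise causal curve (with no breakpoints other than the endpoints). Applying \cref{item:1_null_distance_basic_properties} of \Cref{lem:null_distance_basic_properties}, since $\gamma$ is future-directed causal, we get $\hat{L}_\tau(\gamma) = \tau(q) - \tau(p)$. Taking the infimum over all piecewise causal curves from $p$ to $q$ yields
\[
\hat{d}_\tau(p,q) \leq \hat{L}_\tau(\gamma) = \tau(q) - \tau(p).
\]

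For the lower bound, \Cref{lem:time_functions_1Lip} gives $\hat{d}_\tau(p,q) \geq |\tau(q) - \tau(p)|$. Since $\tau$ is a time function and $\gamma$ is a future-directed causal curve from $p$ to $q$, we have $\tau(p) < \tau(q)$, so the absolute value is simply $\tau(q) - \tau(p)$. Combining the two inequalities concludes the proof.

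There is essentially no main obstacle: the statement is a direct packaging of the two lemmas proved just above, sandwiching $\hat{d}_\tau(p,q)$ between $\tau(q) - \tau(p)$ (from below, via the Lipschitz-type bound) and $\tau(q) - \tau(p)$ (from above, via the candidate curve $\gamma$ witnessing $p \leq q$). The only subtle point worth stating explicitly is that strict monotonicity of $\tau$ along the causal curve $\gamma$ is what lets us drop the absolute value in the lower bound.
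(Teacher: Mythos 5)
Your proof is correct and follows essentially the same route as the paper: the upper bound comes from evaluating $\hat{L}_\tau$ on a future-directed causal curve witnessing $p \leq q$ via \cref{item:1_null_distance_basic_properties} of Lemma~\ref{lem:null_distance_basic_properties}, and the lower bound comes from Lemma~\ref{lem:time_functions_1Lip}. Your explicit handling of the case $p = q$ and of dropping the absolute value via monotonicity are small points the paper leaves implicit, but there is no substantive difference.
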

\begin{proof}
	Suppose $p \leq q$, and let $\beta$ be a future-directed causal curve from $p$ to $q$. Since $\hat{L}_{\tau}(\beta) = \tau(q) - \tau(p)$ by Lemma~\ref{lem:null_distance_basic_properties}, it follows that $\hat{d}_{\tau}(p,q)  \leq \tau(q) - \tau(p)$. The reverse inequality follows from Lemma~\ref{lem:time_functions_1Lip}.
\end{proof}

\subsection{Topology}

\begin{lemma}
	\label{lem:diamonds}
	Let $\tau$ be a time function on $M$. Then:
	\begin{enumerate}
		\item \label{item:1_diamonds}$\tau$ is bounded on diamonds: $p \leq x \leq q \implies \tau(p) \leq \tau(x) \leq \tau(q)$.
		\item \label{item:2_diamonds} $\hat{d}_{\tau}$ is bounded on diamonds: $p \leq x, y \leq q \implies \hat{d}_{\tau}(x, y) \leq 2\left( \tau(q) - \tau(p) \right)$.
	\end{enumerate}
\end{lemma}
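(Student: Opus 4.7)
For part~\cref{item:1_diamonds}, the plan is to unfold the definition of $\leq$. If $p \leq x$, then either $p = x$, in which case $\tau(p) = \tau(x)$, or there exists a future-directed causal curve from $p$ to $x$, along which $\tau$ is strictly increasing by the definition of a time function; in particular $\tau(p) < \tau(x)$. Applying the same argument to $x \leq q$ yields $\tau(x) \leq \tau(q)$, and chaining the two inequalities gives the claim.

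For part~\cref{item:2_diamonds}, the strategy is to construct an explicit piecewise causal curve from $x$ to $y$ that passes through $p$ (the common past endpoint of the diamond). Concretely, I would pick a future-directed causal curve $\alpha$ from $p$ to $x$ and a future-directed causal curve $\gamma$ from $p$ to $y$ (allowing the degenerate case where either is constant if $p = x$ or $p = y$), and then let $\beta$ be the concatenation of $\alpha$ traversed in reverse (a past-directed causal curve from $x$ to $p$) with $\gamma$ (a future-directed causal curve from $p$ to $y$). By construction, $\beta$ is a piecewise causal curve from $x$ to $y$ with a single breakpoint at $p$, so by the definition of null length
\[
    \hat{L}_\tau(\beta) = \abs{\tau(p) - \tau(x)} + \abs{\tau(y) - \tau(p)}.
\]

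To conclude, I would invoke part~\cref{item:1_diamonds}, which gives $\tau(p) \leq \tau(x) \leq \tau(q)$ and $\tau(p) \leq \tau(y) \leq \tau(q)$. Hence both absolute values drop, and each summand is bounded above by $\tau(q) - \tau(p)$, yielding $\hat{L}_\tau(\beta) \leq 2(\tau(q) - \tau(p))$. Taking the infimum over piecewise causal curves from $x$ to $y$ on the left-hand side produces the desired bound on $\hat{d}_\tau(x,y)$. There is no real obstacle here; the only minor nuisance is handling the degenerate cases $p = x$, $p = y$, $x = q$, or $y = q$, where one of the causal curves collapses to a constant, but the conclusion remains valid in each of them.
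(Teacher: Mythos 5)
Your proposal is correct and essentially identical to the paper's argument: part~(1) is the same observation about monotonicity along causal curves, and part~(2) uses the same two-segment piecewise causal curve through one corner of the diamond, the only difference being that you route through the past vertex $p$ (past-directed from $x$ to $p$, then future-directed to $y$) while the paper routes through the future vertex $q$. The estimate via part~(1) and the infimum step are the same in both versions.
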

\begin{proof}
	\cref{item:1_diamonds} follows directly from the fact that $\tau$ is strictly increasing along future-directed causal curves. 
	
	To prove~\cref{item:2_diamonds}, let $\beta$ be a piecewise causal curve consisting of a future-directed causal curve from $x$ to $q$, followed by a past-directed causal curve from $q$ to $y$. Then 
	\[
		\hat{L}_{\tau}(\beta) = \abs{ \tau(q) - \tau(x) } + \abs{ \tau(y) - \tau(q) } \leq 2\left( \tau(q) - \tau(p) \right),
	\]
	where the inequality follows from \cref{item:1_diamonds}.
	Since $\hat{d}_{\tau}(x, y)$ is defined as the infimum of $\hat{L}_{\tau}$ over all piecewise causal curves from $x$ to $y$, the claim follows.
\end{proof}

\begin{lemma}
	Let $\tau$ be a time function on $M$. Then $\hat{d}_{\tau}$ is continuous on $M \times M$.
\end{lemma}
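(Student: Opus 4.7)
The plan is to show sequential continuity: if $(p_j, q_j) \to (p, q)$ in $M \times M$, then $\hat{d}_{\tau}(p_j, q_j) \to \hat{d}_{\tau}(p, q)$. Applying the triangle inequality for the semi-metric $\hat{d}_\tau$ twice gives
\[
    \bigl| \hat{d}_{\tau}(p_j, q_j) - \hat{d}_{\tau}(p, q) \bigr| \leq \hat{d}_{\tau}(p_j, p) + \hat{d}_{\tau}(q_j, q),
\]
so it suffices to prove the following local claim: if $p_n \to p$ in $M$, then $\hat{d}_{\tau}(p_n, p) \to 0$.

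Fix $p \in M$ and $\varepsilon > 0$. The plan is to trap $p$ inside a small causal diamond in which Lemma~\ref{lem:diamonds} gives a quantitative bound on $\hat{d}_\tau$. Since $M$ is time-oriented, the timelike vector field $\vartheta$ defines a smooth future-directed timelike integral curve $\gamma \from (-\delta, \delta) \to M$ with $\gamma(0) = p$. For each $s \in (0, \delta)$, set $a_s = \gamma(-s)$ and $b_s = \gamma(s)$, so that $a_s \ll p \ll b_s$. By continuity of $\tau$, $\tau(b_s) - \tau(a_s) \to 0$ as $s \to 0^+$, hence we may choose $s$ small enough that
\[
    \tau(b_s) - \tau(a_s) < \frac{\varepsilon}{2}.
\]
With this $s$ fixed, write $a = a_s$, $b = b_s$, and consider the open set $V = I^+(a) \cap I^-(b)$, which contains $p$ and is open by Proposition~\ref{prop:futures_open}.

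For every $x \in V$ we have $a \leq x$ and $x \leq b$, as well as $a \leq p \leq b$, so Lemma~\ref{lem:diamonds}\cref{item:2_diamonds} yields
\[
    \hat{d}_{\tau}(x, p) \leq 2\bigl(\tau(b) - \tau(a)\bigr) < \varepsilon.
\]
Since $p_n \to p$ in the manifold topology and $V$ is an open neighborhood of $p$, we have $p_n \in V$ for all sufficiently large $n$, hence $\hat{d}_{\tau}(p_n, p) < \varepsilon$ eventually. This proves the local claim and completes the argument.

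There is no real obstacle here: the only input beyond the semi-metric axioms is the existence of arbitrarily small causal diamonds around any point with $\tau$-oscillation tending to zero, which follows immediately from the existence of timelike directions through $p$ together with the continuity of $\tau$. The symmetry of $\hat{d}_\tau$, which is needed to combine both variables in the triangle inequality reduction, is guaranteed by Proposition~\ref{prop:null_distance_semi-metric}.
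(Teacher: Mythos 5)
Your proof is correct and follows essentially the same route as the paper: reduce to continuity in each variable via the reverse triangle inequality, trap the point in a small causal diamond built from a timelike curve through it, and invoke Lemma~\ref{lem:diamonds}\cref{item:2_diamonds} together with the continuity of $\tau$ to make the bound small. The only differences (using an integral curve of $\vartheta$ rather than an arbitrary future-directed timelike curve, and phrasing the argument via sequences) are cosmetic.
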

\begin{proof}
	Fix any two points $x,y \in M$. Let $\alpha_x$ be a future-directed timelike curve with $\alpha_x(0) = x$, and let $\alpha_y$ be a future-directed timelike curve with $\alpha_y(0) = y$. Choose $\delta > 0$ sufficiently small so that both curves are defined on the interval $[-\delta, \delta]$. Then, for all $x' \in I^+(\alpha_x(-\delta)) \cap I^-(\alpha_x(\delta))$ and $y' \in I^+(\alpha_y(-\delta)) \cap I^-(\alpha_y(\delta))$, we have
	\begin{align*}
		\abs{ \hat{d}_{\tau}(x,y) - \hat{d}_{\tau}(x',y') } &\leq \hat{d}_{\tau}(x,x') + \hat{d}_{\tau}(y,y') \\
		&\leq 2\left( \tau(\alpha_x(\delta)) - \tau(\alpha_x(-\delta)) \right) + 2\left( \tau(\alpha_y(\delta)) - \tau(\alpha_y(-\delta)) \right),
	\end{align*}
	where the first inequality follows from the reverse triangle inequality, and the second from point~\cref{item:2_diamonds} of Lemma~\ref{lem:diamonds}. Since $\tau$ is continuous, the right-hand side tends to zero as $\delta \to 0$. Hence, $\hat{d}_{\tau}$ is continuous at $(x,y)$.
\end{proof}

As a semi-metric, $\hat{d}_{\tau}$ induces a topology on $M$ generated by the \emph{open null balls}:
\[
	\hat{B}_r(x) = \{ y \in M \mid \hat{d}_{\tau}(x,y) < r \}.
\]

\begin{proposition}
	\label{prop:null_distance_topology}
	Let $\tau$ be a time function on $M$. If $\hat{d}_{\tau}$ is definite, then the topology induced by $\hat{d}_{\tau}$ coincides with the manifold topology.
\end{proposition}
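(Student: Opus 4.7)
The plan is to prove mutual containment of the two topologies. The inclusion ``null-open $\Rightarrow$ manifold-open'' is immediate from the preceding lemma: since $\hat{d}_\tau$ is continuous on $M \times M$, for each $x \in M$ the map $y \mapsto \hat{d}_\tau(x,y)$ is continuous, so every null ball $\hat{B}_r(x)$ is open in the manifold topology. Notice that this step does not use definiteness. The harder direction is to show that every manifold-open neighborhood $U$ of $x$ contains some $\hat{B}_r(x)$.

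For this harder direction, I would choose a compact neighborhood $K$ of $x$ with $K \subseteq U$, so that $\partial K$ is compact and disjoint from $x$. By definiteness of $\hat{d}_\tau$ combined with continuity of $\hat{d}_\tau(x, \cdot)$, the infimum $r := \min_{z \in \partial K} \hat{d}_\tau(x,z)$ is attained and strictly positive. I then claim $\hat{B}_r(x) \subseteq K$, equivalently $\hat{d}_\tau(x, y) \geq r$ for every $y \notin K$.

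The core step is a first-exit argument. Given $y \notin K$ and a piecewise causal curve $\beta \from [a,b] \to M$ from $x$ to $y$ with partition $a = s_0 < \dots < s_N = b$, continuity of $\beta$ combined with $x \in \mathrm{int}(K)$ and $y \notin K$ yields a smallest parameter $s_* \in (a,b]$ at which $\beta$ meets $\partial K$. Writing $s_* \in [s_i, s_{i+1}]$, the refined partition $a = s_0 < \dots < s_i < s_*$ exhibits $\at{\beta}{[a,s_*]}$ as a piecewise causal curve from $x$ to $\beta(s_*) \in \partial K$, and the monotonicity of $\tau$ along the causal segment $\beta_i$ shows that truncating it at $s_*$ can only shrink the last term of the null length, so
\[
	\hat{L}_\tau(\at{\beta}{[a,s_*]}) \leq \hat{L}_\tau(\beta).
\]
Combined with $\hat{d}_\tau(x, \beta(s_*)) \geq r$, this gives $\hat{L}_\tau(\beta) \geq r$, and taking the infimum over all such $\beta$ yields $\hat{d}_\tau(x, y) \geq r$. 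Hence $\hat{B}_r(x) \subseteq K \subseteq U$.

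The main obstacle is packaging this first-exit step cleanly, namely verifying that truncating a piecewise causal curve at its first crossing of $\partial K$ produces a piecewise causal curve whose null length does not exceed the original, particularly when $s_*$ falls strictly inside one of the causal segments of the given partition. Everything else reduces to the continuity and definiteness of $\hat{d}_\tau$ already established.
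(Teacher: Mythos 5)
Your proposal is correct and follows essentially the same route as the paper: continuity of $\hat{d}_\tau$ gives one inclusion, and for the other the paper likewise takes a small (Riemannian-ball) neighborhood, uses definiteness plus compactness of its boundary to get a positive minimum of $z \mapsto \hat{d}_\tau(x,z)$, and applies the same first-exit truncation bound $\hat{L}_\tau(\beta) \geq \hat{L}_\tau(\beta_0)$. Your explicit justification of why truncating mid-segment cannot increase the null length is a detail the paper leaves implicit, but it is the identical argument.
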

\begin{proof}
	Since $\hat{d}_{\tau}$ is continuous on $M \times M$, every open null ball $\hat{B}_r(x)$ is open in the manifold topology. Hence, the topology induced by $\hat{d}_{\tau}$ is coarser than or equal to the manifold topology.
	
	Conversely, suppose $U \subseteq M$ is open in the manifold topology, and fix any point $x_0 \in U$. Choose a Riemannian metric $h$ on $M$, and denote by $d_h$ the corresponding distance function. Since $U$ is open, there exists $\varepsilon > 0$ such that the Riemannian ball 
	\[
		B = B^h_{\varepsilon}(x_0) = \{ y \in M \mid d_h(x_0,y) < \varepsilon \}
	\]
	is contained in $U$. Since $\hat{d}_{\tau}$ is definite and continuous, and $\partial B$ is compact, the continuous function 
	\[
		z \mapsto \hat{d}_{\tau}(x_0,z)
	\]
	attains a positive minimum on $\partial B$. Define
	\[
		\varepsilon_0 = \min_{z \in \partial B} \hat{d}_{\tau}(x_0,z) > 0.
	\]
	Now let $y \notin B$, and let $\beta$ be any piecewise causal curve from $x_0$ to $y$. Let $z_0 \in \partial B$ be the first point at which $\beta$ intersects $\partial B$, and denote by $\beta_0$ the initial segment of $\beta$ from $x_0$ to $z_0$. Then
	\[
		\hat{L}_{\tau}(\beta) \geq \hat{L}_{\tau}(\beta_0) \geq \varepsilon_0.
	\]
	Taking the infimum over all such curves $\beta$ yields 
	\[
		y \notin B \implies \hat{d}_{\tau}(x_0,y) \geq \varepsilon_0.
	\]
	Thus, we have 
	\[
		\hat{B}_{\varepsilon_0}(x_0) \subseteq B \subseteq U.
	\]
	Since $x_0 \in U$ was arbitrary, $U$ is open in the topology induced by $\hat{d}_{\tau}$. Therefore, the two topologies coincide.
\end{proof}

\subsection{Definiteness}

\subsubsection{Definiteness and Anti-Lipschitz Condition}

\begin{definition}
	A function $f \from M \to \RR$ is said to be \emph{anti-Lipschitz} on a subset $U \subseteq M$ if there exists a metric $d_U$ on $U$ such that, for all $x,y \in U$, 
	\[
		x \leq y \implies f(y) - f(x) \geq d_U(x,y).
	\]
	The function $f$ is called \emph{locally anti-Lipschitz} if it is anti-Lipschitz on some open neighborhood of every point in $M$.
\end{definition}

Since all metrics on a manifold are locally Lipschitz equivalent, the following lemma follows immediately.

\begin{lemma}
	\label{lem:anti-Lip_equivalent_conditions}
	Let $h$ be any Riemannian metric on $M$, and denote by $d_h$ the corresponding distance function. Then the following conditions on a function $f \from M \to \RR$ are equivalent:
	\begin{enumerate}
		\item $f$ is locally anti-Lipschitz.
		\item For every point $p \in M$, there exist an open neighborhood $U$ of $p$ and a constant $C > 0$ such that, for all $x,y \in U$,
		\[
				x \leq y \implies f(y) - f(x) \geq Cd_h(x,y).
		\]
	\end{enumerate}
\end{lemma}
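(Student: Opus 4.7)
The plan is to prove the two implications separately, with the $(2) \Rightarrow (1)$ direction being essentially a definition chase and the $(1) \Rightarrow (2)$ direction the one requiring the remark preceding the lemma.

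For $(2) \Rightarrow (1)$, I would fix a point $p \in M$ together with the open neighborhood $U$ and constant $C > 0$ provided by condition $(2)$, and then set $d_U := C \cdot d_h|_{U \times U}$. Since $d_h$ is a metric and $C > 0$, the function $d_U$ is a metric on $U$, and condition $(2)$ reads exactly as the anti-Lipschitz inequality $f(y) - f(x) \geq d_U(x,y)$ for every $x \leq y$ in $U$. Hence $f$ is anti-Lipschitz on $U$, and since $p$ was arbitrary, $f$ is locally anti-Lipschitz.

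For the converse $(1) \Rightarrow (2)$, I would fix $p \in M$ and let $U$ and $d_U$ be as in the definition of locally anti-Lipschitz at $p$. The task then reduces to producing a smaller neighborhood $V \subseteq U$ of $p$ and a constant $C > 0$ such that
\[
    d_U(x,y) \geq C \cdot d_h(x,y) \quad \text{for all } x,y \in V,
\]
because once this lower bound is in hand, the anti-Lipschitz bound yields $f(y) - f(x) \geq d_U(x,y) \geq C \cdot d_h(x,y)$ whenever $x \leq y$ in $V$, which is exactly condition $(2)$.

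The main obstacle, and the content of the remark that ``all metrics on a manifold are locally Lipschitz equivalent'', is precisely the lower bound $d_U \geq C \cdot d_h$ near $p$. Concretely, I would work in a chart centered at $p$, take a small closed coordinate ball $\overline{V} \subseteq U$, and compare both $d_U$ and $d_h$ to the Euclidean distance of the chart: for $d_h$ this is a standard compactness argument for Riemannian distances in a chart, and the analogous estimate for $d_U$ uses that $d_U$ is compatible with the manifold topology on $U$ (an assumption implicit in the anti-Lipschitz definition used in~\cite{SormaniVega2016}). Combining the two bi-Lipschitz comparisons on $V$ yields a constant $C > 0$ with $d_U \geq C \cdot d_h$, completing the proof. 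The delicate point is therefore the implicit topological compatibility of the metric $d_U$: with a genuinely arbitrary metric the equivalence could fail, but in the present setting this compatibility is standard and the local equivalence then follows from routine compactness considerations.
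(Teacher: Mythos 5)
Your $(2) \Rightarrow (1)$ direction is correct and is exactly what the paper intends: rescaling the restriction of $d_h$ to $U$ by the constant $C$ produces an admissible metric $d_U$, and nothing more is needed there.

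The gap is in $(1) \Rightarrow (2)$, at the step where you derive the lower bound $d_U \geq C\, d_h$ on a small coordinate ball from topological compatibility of $d_U$ ``by routine compactness considerations''. Compactness together with topological compatibility yields only \emph{uniform} equivalence of the two metrics (a common modulus of continuity in each direction), not Lipschitz equivalence, and the bound can genuinely fail in the direction you need. Already on $U = (-1,1) \subseteq \RR$ the metric $d_U(x,y) = \abs{x^3 - y^3}$ induces the standard topology, yet $\inf\{ d_U(x,y)/\abs{x-y} : x \neq y,\ x,y \in V \} = 0$ for every neighborhood $V$ of the origin, so no estimate $d_U \geq C\, d_h$ holds near $0$. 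The chart comparison you invoke for $d_h$ against the Euclidean distance works because both are length metrics induced by continuous metric tensors, which can be compared fiberwise on the compact unit sphere bundle; a bare distance function $d_U$ carries no infinitesimal structure, so that argument has no analogue for it. Note that the paper itself disposes of this lemma with the single remark that ``all metrics on a manifold are locally Lipschitz equivalent,'' which is true for Riemannian (length) metrics but not for arbitrary distance functions; the lemma is immediate precisely under the convention that the $d_U$ appearing in the definition of anti-Lipschitz is (locally Lipschitz-comparable to) the restriction of a Riemannian distance. Your attempt to replace that convention by mere topological compatibility does not close the implication: either the definition must be read with the stronger Riemannian convention, or a genuinely different argument for $(1) \Rightarrow (2)$ is required.
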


\begin{proposition}
	\label{prop:null_distance_definiteness}
	Let $\tau$ be a time function on $M$. Then $\hat{d}_{\tau}$ is definite if and only if $\tau$ is locally anti-Lipschitz.
\end{proposition}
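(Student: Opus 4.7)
The plan is to prove the two implications separately. The forward direction (locally anti-Lipschitz implies definite) carries the substance, while the reverse direction is essentially formal once one recognizes that $\hat{d}_\tau$ itself furnishes a witnessing metric.

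For the forward direction, I fix distinct points $p, q \in M$ and aim to produce a strictly positive lower bound on $\hat{d}_\tau(p,q)$. Fix a background Riemannian metric $h$ on $M$ with distance function $d_h$, and apply \cref{lem:anti-Lip_equivalent_conditions} to obtain an open neighborhood $U$ of $p$ and a constant $C > 0$ such that, for all $x, y \in U$ with $x \leq y$, one has $\tau(y) - \tau(x) \geq C d_h(x,y)$. Shrinking $U$ if necessary, I choose $\varepsilon > 0$ small enough that the closed $h$-ball of radius $\varepsilon$ centered at $p$ is contained in $U$ and avoids $q$. The key observation is that any piecewise causal curve $\beta \from [a,b] \to M$ from $p$ to $q$ must meet the bounding sphere; let $s^*$ be the first such time, and refine the breakpoint partition of $\beta$ to include $s^*$. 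Because $\tau \circ \beta_i$ is strictly monotone on each causal piece, inserting an intermediate point merely splits one monotone difference into two contributions of the same sign, so the refinement preserves the null length and $\hat{L}_\tau(\beta) \geq \hat{L}_\tau(\at{\beta}{[a,s^*]})$. On each subpiece of $\at{\beta}{[a,s^*]}$ one endpoint causally precedes the other, so the anti-Lipschitz bound gives $\abs{\tau(\beta(s_{i+1})) - \tau(\beta(s_i))} \geq C d_h(\beta(s_i), \beta(s_{i+1}))$ regardless of whether the piece is future- or past-directed. Summing and invoking the triangle inequality for $d_h$ yields $\hat{L}_\tau(\beta) \geq C d_h(p, \beta(s^*)) = C\varepsilon$, so taking the infimum over $\beta$ gives $\hat{d}_\tau(p,q) \geq C\varepsilon > 0$.

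For the reverse direction, assume $\hat{d}_\tau$ is definite; by \cref{prop:null_distance_semi-metric} combined with definiteness, it is a genuine metric on $M$, and its restriction to any subset $U \subseteq M$ is again a metric. Setting $d_U = \hat{d}_\tau|_U$ and applying \cref{lem:null_distance_causality_property}, for $x, y \in U$ with $x \leq y$ one has $\tau(y) - \tau(x) = \hat{d}_\tau(x,y) = d_U(x,y)$. This exhibits $\tau$ as anti-Lipschitz on every open $U \subseteq M$, and in particular locally anti-Lipschitz; no appeal to \cref{lem:anti-Lip_equivalent_conditions} is needed in this direction, since the null distance itself is the witnessing metric.

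The only real obstacle lies in the forward direction, and it is bookkeeping rather than conceptual: one must verify that refining the partition of $\beta$ does not inflate the null length (this rests on the strict monotonicity of $\tau$ along each causal piece), and that the anti-Lipschitz inequality can be applied irrespective of the time-direction of each piece. Once these are in place, the proof reduces to summing the pointwise inequality along the causal chain and combining with the triangle inequality for $d_h$ to trap $\hat{L}_\tau(\beta)$ from below by the radius of the enclosing ball.
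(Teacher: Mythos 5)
Your proof is correct and follows essentially the same route as the paper's: the easy implication uses Lemma~\ref{lem:null_distance_causality_property} to exhibit $\hat{d}_\tau$ itself as the witnessing metric, and the substantive implication bounds $\hat{L}_\tau(\beta)$ from below by summing the anti-Lipschitz estimate over the breakpoints of the initial segment of $\beta$ up to its first exit from a small ball around $p$, then chaining with the triangle inequality. The only cosmetic differences are that you invoke Lemma~\ref{lem:anti-Lip_equivalent_conditions} to work with a Riemannian $d_h$ and constant $C$ where the paper uses the abstract $d_U$ directly, and that you make explicit the (correct) observation that refining the partition at the exit time does not change the null length.
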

\begin{proof}
	Suppose first that $\hat{d}_{\tau}$ is definite. Then, by Lemma~\ref{lem:null_distance_causality_property}, it follows that $\tau$ is anti-Lipschitz on all of $M$, and in particular locally anti-Lipschitz.
	
	Conversely, assume that $\tau$ is locally anti-Lipschitz. Let $p,q \in M$ be distinct points, and let $U$ be an open neighborhood of $p$ on which $\tau$ is anti-Lipschitz. Choose a precompact open set $B \subseteq U$ such that $p \in B$ and $q \notin B$, and let $\beta$ be any piecewise causal curve from $p$ to $q$. Let $z_0 \in \partial B$ be the first point at which $\beta$ intersects $\partial B$, and denote by $\beta_0$ the initial segment of $\beta$ from $p$ to $z_0$, with breakpoints $p = x_0, x_1, \dots, x_N = z_0$. Since $\beta_0 \subseteq U$ and $\tau$ is anti-Lipschitz on $U$, we have
	\begin{align*}
		\hat{L}_{\tau}(\beta) &\geq \hat{L}_{\tau}(\beta_0) = \sum_{i=0}^{N-1} \abs{\tau(x_{i+1}) - \tau(x_i)} \\
		&\geq \sum_{i=0}^{N-1} d_U(x_{i+1},x_i) \\
		&\geq d_U(p,z_0) \\
		&\geq \dist(p,\partial B).
	\end{align*}
	Taking the infimum over all such curves $\beta$, it follows that 
	\[
		\hat{d}_{\tau}(p,q) \geq \dist(p,\partial B) > 0,
	\]
	which proves that $\hat{d}_{\tau}$ is definite.
\end{proof}

Combining Propositions~\ref{prop:null_distance_semi-metric},~\ref{prop:null_distance_topology}, and~\ref{prop:null_distance_definiteness}, we obtain the following result.

\begin{theorem}
	Let $\tau$ be a time function on $M$. If $\tau$ is locally anti-Lipschitz, then $\hat{d}_{\tau}$ is a metric on $M$ that induces the manifold topology.
\end{theorem}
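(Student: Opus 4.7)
The statement is essentially an immediate assembly of the three preceding propositions, so the plan is purely to identify which result supplies which property and verify that the hypotheses match.

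First, I would note that since $\tau$ is a time function, Proposition~\ref{prop:null_distance_semi-metric} applies directly and gives that $\hat{d}_{\tau}$ is a semi-metric on $M$. This supplies non-negativity, symmetry, and the triangle inequality; what is still missing at this stage is definiteness.

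Second, I would invoke Proposition~\ref{prop:null_distance_definiteness}: the local anti-Lipschitz hypothesis on $\tau$ is precisely the condition characterized there as equivalent to definiteness of $\hat{d}_{\tau}$. Combining this with the first step promotes $\hat{d}_{\tau}$ from a semi-metric to a genuine metric on $M$.

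Third, with definiteness now established, the hypothesis of Proposition~\ref{prop:null_distance_topology} is satisfied, and that proposition then yields that the topology induced by $\hat{d}_{\tau}$ coincides with the manifold topology on $M$, completing the proof.

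There is no genuine obstacle here: each ingredient has already been proved separately, and the only thing to check is that the hypotheses of each cited proposition are satisfied by the current assumptions, which is immediate. The entire proof can be written as a two-sentence citation of the three propositions in the order above.
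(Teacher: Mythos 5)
Your proposal is correct and is exactly how the paper obtains this result: the theorem is stated there as an immediate combination of Propositions~\ref{prop:null_distance_semi-metric}, \ref{prop:null_distance_definiteness}, and \ref{prop:null_distance_topology}, in precisely the order and roles you describe. Nothing is missing; the hypothesis checks you mention are indeed immediate.
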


\subsubsection{Temporal Functions}

Every temporal function $f \from M \to \RR$ is, in particular, a time function. As such, the associated null distance $\hat{d}_f$ is a semi-metric on $M$. A natural question is whether $\hat{d}_f$ is actually a metric, or equivalently, whether $f$ is locally anti-Lipschitz. This is indeed the case for $C^1$ temporal functions. Moreover, within the $C^1$  class, the local anti-Lipschitz condition characterizes temporal functions.

\begin{theorem}
	\label{thm:temporal_functions_anti-Lip}
	Let $f \from M \to \RR$ be a $C^1$ function. Then $f$ is locally anti-Lipschitz if and only if it is a temporal function.
\end{theorem}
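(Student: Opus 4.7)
The plan is to prove the two implications separately, using Lemma~\ref{lem:anti-Lip_equivalent_conditions} to replace the abstract metric $d_U$ in the anti-Lipschitz condition by the distance function $d_h$ of a fixed auxiliary Riemannian metric $h$ on $M$.

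\emph{Forward direction.} Fix $p \in M$. Since $(\nabla f)_p$ is past-directed timelike, Lemma~\ref{lem:past_timelike}~(3) supplies a constant $C_p > 0$ with $g((\nabla f)_p, X) \geq C_p \norm{X}_{h_p}$ for every future-directed causal $X \in T_pM$. A compactness argument then extends this to a uniform constant: the continuous function $(q, X) \mapsto g((\nabla f)_q, X)$ is strictly positive on the compact set of $h$-unit future-directed causal vectors based in a small compact neighborhood of $p$, and therefore attains a positive minimum $C > 0$ there. Thus on some open neighborhood $V$ of $p$,
\[
g((\nabla f)_q, X) \geq C\norm{X}_h \quad \text{for all } q \in V \text{ and all future-directed causal } X \in T_qM.
\]
By Proposition~\ref{lem:gradient_implies_time_function}, $f$ is a time function, so by Theorem~\ref{thm:strongly_causal} $M$ is stably, hence strongly, causal; this lets us shrink $V$ to an open neighborhood $U \subseteq V$ of $p$ such that every causal curve with endpoints in $U$ remains in $V$. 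For $x,y \in U$ with $x \leq y$ and any future-directed causal curve $\alpha \from [a,b] \to M$ from $x$ to $y$, we then have $\alpha \subseteq V$, and the fundamental theorem of calculus yields
\[
f(y) - f(x) = \int_a^b g\bigl((\nabla f)_{\alpha(s)}, \alpha'(s)\bigr) \de s \geq C\int_a^b \norm{\alpha'(s)}_h \de s \geq Cd_h(x,y),
\]
establishing the anti-Lipschitz condition in the form of Lemma~\ref{lem:anti-Lip_equivalent_conditions}.

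\emph{Backward direction.} A $C^1$ locally anti-Lipschitz function is automatically a time function, since along any future-directed causal curve one may decompose into short segments contained in anti-Lipschitz neighborhoods to obtain strict monotonicity of $f$. By Lemma~\ref{lem:time_functions_gradient}, it follows that $(\nabla f)_p$ is past-directed causal or zero at every $p \in M$, and it remains to rule out the null and zero cases. Fix $p \in M$ and a future-directed causal $X \in T_pM$, and set $\alpha(s) = \exp_p(sX)$. Since geodesics preserve their causal character, $\alpha$ is a future-directed causal curve with $\alpha(0) = p$ and $\alpha'(0) = X$, so in particular $\alpha(s) \geq p$ for small $s > 0$. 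In an anti-Lipschitz neighborhood $U$ of $p$, Lemma~\ref{lem:anti-Lip_equivalent_conditions} gives
\[
f(\alpha(s)) - f(p) \geq Cd_h(p, \alpha(s))
\]
for all small $s > 0$. Using the expansions $f(\alpha(s)) - f(p) = s\,\de f_p(X) + o(s)$ and $d_h(p, \alpha(s)) = s\norm{X}_{h_p} + o(s)$, dividing by $s$ and letting $s \to 0^+$ yields
\[
g((\nabla f)_p, X) = \de f_p(X) \geq C\norm{X}_{h_p}
\]
for every future-directed causal $X \in T_pM$. By Lemma~\ref{lem:past_timelike}~(3), $(\nabla f)_p$ is then past-directed timelike, so $f$ is temporal.

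The main technical obstacle is the forward direction: one must simultaneously secure a \emph{uniform} lower-bound constant on a neighborhood of $p$ and then invoke strong causality to confine causal curves between nearby points to the neighborhood where the uniform bound holds. Once these are in place, both directions reduce to the standard trick of integrating $g(\nabla f, \alpha')$ along a causal curve $\alpha$.
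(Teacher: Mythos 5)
Your proof is correct and follows essentially the same route as the paper: reduce the anti-Lipschitz condition to its Riemannian form via Lemma~\ref{lem:anti-Lip_equivalent_conditions}, pass between the integrated inequality and the infinitesimal bound $g(\nabla f, X) \geq C\norm{X}_h$, and conclude with Lemma~\ref{lem:past_timelike}; in effect you have inlined the content of Proposition~\ref{prop:C1_functions_anti-Lip}, which the paper proves separately and then cites. The one genuine point of divergence is the uniformization step in the direction from temporal to anti-Lipschitz: the paper isolates this as Lemma~\ref{lem:bounded_away_zero} and proves it by a sequential-compactness contradiction argument that applies to any collection of past-directed timelike vectors locally bounded away from zero, whereas you take the positive minimum of the continuous function $(q,X) \mapsto g((\nabla f)_q, X)$ over the set of $h$-unit future-directed causal vectors based over a compact neighborhood of $p$. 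Your version is shorter and exploits the continuity of $\nabla f$ directly; note only that the compactness of that set rests on the fact that a causal vector cannot be $g$-orthogonal to the time orientation (Lemma~\ref{lem:causal_timelike}), which makes the future-directed condition closed on the unit causal bundle. Two minor remarks: your appeal to Lemma~\ref{lem:time_functions_gradient} in the backward direction is redundant, since the inequality you derive feeds directly into Lemma~\ref{lem:past_timelike}~(3)$\implies$(1); and where you differentiate along $g$-geodesics $\exp_p(sX)$ with causal $X$, the paper uses $h$-geodesics with timelike initial velocity and extends to causal vectors by continuity --- both work.
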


The following proposition provides an equivalent characterization of the local anti-Lipschitz condition for $C^1$ functions.
\begin{proposition}
	\label{prop:C1_functions_anti-Lip}
	Let $h$ be any Riemannian metric on $M$, and denote by $d_h$ the corresponding distance function. Then the following conditions on a $C^1$ function $f \from M \to \RR$ are equivalent:
	\begin{enumerate}
		\item \label{item:1_C1_functions_anti-Lip} For every point $p \in M$, there exist an open neighborhood $U$ of $p$ and a constant $C > 0$ such that, for all $x,y \in U$,
		\[
			x \leq y \implies f(y) - f(x) \geq Cd_h(x,y).
		\]
		\item \label{item:2_C1_functions_anti-Lip} For every point $p \in M$, there exist an open neighborhood $U$ of $p$ and a constant $C > 0$ such that, for all future-directed causal vectors $X \in TU$,
		\[
			g(\nabla f, X) \geq C\norm{X}_h.
		\]
	\end{enumerate}
\end{proposition}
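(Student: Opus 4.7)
The plan is to prove the two implications separately, exploiting the pointwise duality between the gradient of $f$ and its behavior along causal curves.

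For the direction $(2)\Rightarrow(1)$, given $p$ I take $U_p$ and $C$ from $(2)$ and first observe that the pointwise gradient inequality, viewed through condition (3) of Lemma~\ref{lem:past_timelike}, forces $\nabla f$ to be past-directed timelike throughout $U_p$. Since this holds around every point, $f$ is a $C^1$ temporal function on $M$, hence a time function by Proposition~\ref{lem:gradient_implies_time_function}. Theorem~\ref{thm:strongly_causal} then ensures that $(M,g)$ is stably causal, in particular strongly causal. I therefore select a causally convex open neighborhood $V\subseteq U_p$ of $p$: for any $x,y\in V$ with $x\leq y$, every future-directed causal curve $\gamma\from[a,b]\to M$ from $x$ to $y$ is confined to $V\subseteq U_p$, so that
\[
    f(y) - f(x) = \int_a^b g(\nabla f, \gamma'(s))\,\de s \geq C\int_a^b \norm{\gamma'(s)}_h\,\de s = C\,L_h(\gamma) \geq C\,d_h(x,y),
\]
which gives $(1)$ on $V$ with the same constant $C$.

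For the direction $(1)\Rightarrow(2)$, I fix $p$ and take $U$ and $C$ from $(1)$. Let $q\in U$ and let $X\in T_qM$ be future-directed timelike. Using chart coordinates around $q$, I consider the straight-line curve $\gamma(t)=q+tX$: by continuity of $g$ and openness of $U$, $\gamma$ is future-directed timelike and contained in $U$ on some interval $[0,\varepsilon]$. Since $q\leq\gamma(t)$ for $t\in(0,\varepsilon]$, condition $(1)$ yields
\[
    \frac{f(\gamma(t))-f(q)}{t}\geq C\,\frac{d_h(q,\gamma(t))}{t}.
\]
Passing to the limit $t\to 0^+$ and using the standard Riemannian identity $\lim_{t\to 0^+} d_h(q,\gamma(t))/t = \norm{X}_{h_q}$, I obtain $g(\nabla f|_q,X)\geq C\norm{X}_{h_q}$. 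To extend the inequality to an arbitrary future-directed causal $X\in T_qM$, I fix a future-directed timelike $W\in T_qM$ and apply the timelike case to $X_\varepsilon=X+\varepsilon W$, which is future-directed timelike for every $\varepsilon>0$; letting $\varepsilon\to 0^+$ and invoking continuity of $g$ and of $\norm{\cdot}_{h_q}$ transfers the bound to $X$ with the same constant $C$.

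The main obstacle is in the direction $(2)\Rightarrow(1)$: one must prevent the causal curve connecting $x$ to $y$ from leaving the neighborhood on which the pointwise gradient bound is available, since outside that neighborhood there is no uniform lower bound for $g(\nabla f,\gamma')$. The cleanest remedy is to invoke strong causality and replace $U_p$ by a causally convex sub-neighborhood, forcing all relevant causal curves to stay inside. Crucially, this is available only because condition $(2)$ itself makes $f$ a time function, so Theorem~\ref{thm:strongly_causal} can be applied.
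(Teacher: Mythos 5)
Your proof is correct and follows essentially the same route as the paper: for $(2)\Rightarrow(1)$ you use the identical chain (gradient bound $\Rightarrow$ temporal $\Rightarrow$ time function $\Rightarrow$ strong causality $\Rightarrow$ causal curves trapped in $U_p$, then integrate $g(\nabla f,\gamma')$), and for $(1)\Rightarrow(2)$ you use the same difference-quotient argument along a short future-directed timelike curve through $q$, differing only cosmetically (a coordinate straight line with the first-order expansion of $d_h$ instead of the paper's unit-speed $h$-geodesic, and an explicit $X+\varepsilon W$ perturbation where the paper simply invokes continuity to pass from timelike to causal vectors). No gaps.
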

\begin{proof}
	\cref{item:1_C1_functions_anti-Lip}$\implies$\cref{item:2_C1_functions_anti-Lip}. Fix a point $p \in M$, and let $U$ and $C$ be as in~\cref{item:1_C1_functions_anti-Lip}. Let $q \in U$, and	consider any future-directed timelike vector $X \in T_qU$. Let $\alpha \from (-\delta, \delta) \to M$ be an $h$-geodesic with $\alpha(0) = q$ and $\alpha'(0) = X / \norm{X}_h$, for some $\delta > 0$ small enough so that $\alpha(t) \in U$ for all $t \in (-\delta, \delta)$. Since $\alpha'(0)$ is future-directed timelike, the curve $\alpha$ is future-directed timelike near $t = 0$. Furthermore, being an $h$-geodesic, $\alpha$ is locally minimizing for $d_h$. Therefore, for all sufficiently small $t > 0$, we have $\alpha(0) \leq \alpha(t)$ and $d_h(\alpha(0), \alpha(t)) = t$. Applying the inequality in~\cref{item:1_C1_functions_anti-Lip}, we obtain
	\[
		f(\alpha(t)) - f(\alpha(0)) \geq Ct.
	\]
	Dividing both sides by $t$ and taking the limit $t \to 0^+$ yields
	\[
		(f\circ\alpha)'(0) = \lim_{t \to 0^+} \frac{f(\alpha(t)) - f(\alpha(0))}{t} \geq C.
	\]
	Since $(f \circ \alpha)'(0) = g( \nabla f, X / \norm{X}_h )$, we conclude that
	\[
		g(\nabla f, X) \geq C\norm{X}_h.
	\]
	By continuity of $g$ and $h$, this inequality extends to all future-directed causal vectors $X \in TU$. 
	
	\cref{item:2_C1_functions_anti-Lip}$\implies$\cref{item:1_C1_functions_anti-Lip}. Fix a point $p \in M$, and let $U$ and $C$ be as in~\cref{item:2_C1_functions_anti-Lip}. By Lemma~\ref{lem:past_timelike}, the inequality in \cref{item:2_C1_functions_anti-Lip} implies that $\nabla f$ is past-directed timelike on all of $M$. Hence, $f$ is a temporal function. In particular, it is a time function, and thus $M$ is strongly causal by Theorem~\ref{thm:strongly_causal}. Therefore, there exists an open neighborhood $V \subseteq U$ of $p$ such that every future-directed causal curve with endpoints in $V$ is entirely contained in $U$. Let $x,y \in V$ with $x \leq y$, and let $\alpha \from [a,b] \to M$ be a future-directed causal curve from $x$ to $y$. Since $\alpha$ is entirely contained in $U$, we obtain
	\[
		f(y) - f(x) = \int_a^b (f\circ\alpha)' \de s = \int_a^b g(\nabla f, \alpha') \de s \geq CL_h(\alpha) \geq Cd_h(x,y),
	\]
	where the first inequality follows from the assumption in~\cref{item:2_C1_functions_anti-Lip}, and the second from the definition of $d_h(x,y)$ as the infimum of the $h$-length over all smooth curves from $x$ to $y$.
\end{proof}

\begin{definition}
	\label{def:bounded_away_zero}
	Let $h$ be any Riemannian metric on $M$ and let $\mathcal{T} \subseteq TM$ be a collection of tangent vectors. We say that $\mathcal{T}$ is \emph{locally bounded away from zero} if, for every point $p \in M$, there exist an open neighborhood $U$ of $p$ and a constant $C > 0$ such that, for all $T \in \mathcal{T} \cap TU$, $\norm{T}_g \geq C$.
\end{definition}

\begin{lemma}
	\label{lem:bounded_away_zero}
	Let $h$ be any Riemannian metric on $M$, and let $\mathcal{T} \subseteq TM$ be a collection of tangent vectors. If the vectors in $\mathcal{T}$ are past-directed timelike and locally bounded away from zero, then, for every point $p \in M$, there exist an open neighborhood $U$ of $p$ and a constant $C > 0$ such that, for all future-directed causal vectors $X \in TU$ and all $T \in \mathcal{T} \cap TU$, 
	\[
		g(T,X) \geq C\norm{X}_h.
	\]
\end{lemma}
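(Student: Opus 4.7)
My plan is to combine Lemma~\ref{lem:past_timelike}---which supplies the pointwise positivity $g(T,X) > 0$ for past-directed timelike $T$ and future-directed causal $X$---with a compactness argument on a bundle over a precompact neighborhood of $p$.

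Concretely, I would first fix $p \in M$ and invoke the hypothesis to obtain an open neighborhood $U_0$ of $p$ and a constant $C_0 > 0$ with $\norm{T}_g \geq C_0$ for every $T \in \mathcal{T} \cap TU_0$. I would then pass to a smaller open neighborhood $U$ of $p$ whose closure $\overline{U}$ is compact and contained in $U_0$, and consider
\[
	\mathcal{C} = \{ (q,X) \in TM \mid q \in \overline{U},\ X \in T_qM \text{ future-directed causal},\ \norm{X}_h = 1 \},
\]
which is a closed subset of the $h$-unit sphere bundle over $\overline{U}$, and hence compact.

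To handle the dependence on $T$, I would normalize: for each $T \in \mathcal{T} \cap T\overline U$, set $\hat T = T/\norm{T}_g$, so that $\hat T$ is a $g$-unit past-directed timelike vector at the same base point and
\[
	g(T,X) = \norm{T}_g \, g(\hat T, X) \geq C_0 \, g(\hat T, X).
\]
It would then suffice to bound the continuous function $g(\hat T, X)$ below by a positive constant uniformly over pairs $(\hat T, X)$ with $(q,X) \in \mathcal{C}$ and $\hat T$ a normalization of some $T \in \mathcal{T} \cap T_q\overline{U}$. By Lemma~\ref{lem:past_timelike}, the function is strictly positive on this set; once the set is compact, it attains a positive minimum $C' > 0$, and setting $C = C_0 C'$ yields $g(T,X) \geq C \norm{X}_h$ for all future-directed causal $X \in TU$ and all $T \in \mathcal{T} \cap TU$, extending from the $h$-unit case by rescaling.

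The main obstacle is securing compactness on the $\hat T$-side. A $g$-norm lower bound alone does not prevent the $h$-norm of $\hat T$ from blowing up: at each point the $g$-unit past-directed timelike vectors form a non-compact hyperboloid branch, along which $g(\hat T, X)$ can tend to zero for suitable null $X$. Compactness must therefore be extracted from a local upper bound on $\mathcal{T}$, which is automatic in the intended applications where $\mathcal{T}$ arises as the image of a continuous vector field (such as $\nabla f$ of a $C^1$ temporal function) and is hence locally uniformly bounded on any precompact neighborhood of $p$.
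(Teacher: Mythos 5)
Your strategy---pointwise positivity from Lemma~\ref{lem:past_timelike} plus a compactness argument to make the bound uniform---is the same idea as the paper's proof, which runs the contrapositive: it assumes the bound fails, extracts sequences $p_j \to p_0$, $h$-unit future-directed causal $X_j \to X_0$ and $T_j \in \mathcal{T}$ with $g(T_j,X_j) < j^{-1}$, and splits into the cases $\norm{T_j}_h$ bounded or unbounded. In the bounded case it concludes exactly as you would: the limit $V_0$ satisfies $g(V_0,X_0)=0$, hence is null or zero by Lemma~\ref{lem:past_timelike}, contradicting $\norm{T_j}_g \geq C$. So in the regime where $\mathcal{T}$ is locally $h$-bounded, your argument and the paper's coincide in substance, and your version closes cleanly: with $\norm{T}_h \leq K$ and $\norm{T}_g \geq C_0$ the normalized vectors $\hat{T}=T/\norm{T}_g$ satisfy $\norm{\hat{T}}_h \leq K/C_0$, their closure consists of $g$-unit (hence timelike) past-directed vectors, and the minimum of $g(\hat{T},X)$ over the resulting compact set is positive.

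The obstacle you flag in your last paragraph is not a defect of your approach but of the lemma itself, and you have put your finger on precisely the step where the paper's proof breaks. In the unbounded case the paper sets $W_j = T_j/\norm{T_j}_h$, shows $W_0$ is null, and then claims a contradiction from $0 < C \leq \norm{T_j}_g/\norm{T_j}_h$; but the hypothesis only gives $\norm{T_j}_g \geq C$, and once $\norm{T_j}_h \to \infty$ the ratio $\norm{T_j}_g/\norm{T_j}_h$ can perfectly well tend to $0$. Indeed the statement is false without an upper bound: in $\mathbb{M}^{n+1}$ with $h$ Euclidean, let $\mathcal{T}$ be the set of \emph{all} past-directed $g$-unit timelike vectors (so $\norm{T}_g \equiv 1$, locally bounded away from zero); taking $T_A = -Ae_0 + \sqrt{A^2-1}\,e_1$ and the fixed future-directed null vector $X = e_0 - e_1$ gives $g(T_A,X) = A - \sqrt{A^2-1} \to 0$ as $A \to \infty$, so no uniform $C>0$ exists even at a single point. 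Your proposed repair---require $\mathcal{T}$ to be locally bounded above in the $h$-norm, which is automatic when $\mathcal{T}$ is the image of the continuous vector field $\nabla f$ in the proof of Theorem~\ref{thm:temporal_functions_anti-Lip}---is exactly the hypothesis the lemma needs; with it added, both your minimum argument and the paper's bounded case complete the proof, and the application is unaffected.
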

\begin{proof}
	Suppose, for contradiction, that the conclusion fails at some point $p_0 \in M$. Let $U$ and $C $ be as in Definition~\ref{def:bounded_away_zero}, corresponding to $p = p_0$. Then there exist a sequence of points $p_j \in U$, with $p_j \to p_0$, a sequence of future-directed causal vectors $X_j \in T_{p_j}M$, and a sequence of past-directed timelike vectors $T_j \in \mathcal{T} \cap T_{p_j}M$ such that
	\[
		g(T_j, X_j) < j^{-1}\norm{X_j}_h.
	\]
	By rescaling and passing to a subsequence if necessary, we may assume that $\norm{X_j}_h = 1$ for all $j$ and that $X_j \to X_0$ for some future-directed causal vector $X_0 \in T_{p_0}M$. Assume first that the sequence $T_j$ is bounded in the $h$-norm. Then, up to a further subsequence, $T_j \to V_0 \in T_{p_0}M$, where $V_0$ is either past-directed causal or zero. Passing to the limit in the inequality yields
	\[
		g(V_0, X_0) = 0.
	\]
	By Lemma~\ref{lem:past_timelike}, it follows that $V_0$ is either null or zero. In both cases, we have $\norm{T_j}_g \to \norm{V_0}_g = 0$, contradicting the assumption $\norm{T_j}_g \geq C > 0$.	Suppose instead that $\norm{T_j}_h \to \infty$. Define $W_j = T_j / \norm{T_j}_h$. Then, for sufficiently large $j$, we have $\norm{T_j}_h \geq 1$, and thus
	\[
		g(W_j, X_j) < j^{-1}\norm{T_j}_h^{-1} \leq j^{-1}.
	\]
	Since $\norm{W_j}_h = 1$ for all $j$, we may extract a subsequence such that $W_j \to W_0$ for some past-directed causal vector $W_0 \in T_{p_0}M$. Passing to the limit in the inequality yields 
	\[
		g(W_0, X_0) = 0.
	\]
	By Lemma~\ref{lem:past_timelike}, it follows that $V_0$ is null. This contradicts the assumption, since for sufficiently large $j$ we have
	\[
		0 < C \leq \frac{\norm{T_j}_g}{\norm{T_j}_h} = \norm{W_j}_g \to \norm{W_0}_g = 0.
	\]
\end{proof}

We are now ready to prove Theorem~\ref{thm:temporal_functions_anti-Lip}.

\begin{proof}[Proof of Theorem~\ref{thm:temporal_functions_anti-Lip}]
	Suppose first that $f$ is locally anti-Lipschitz, and let $h$ be any Riemannian metric on $M$. Then, by Proposition~\ref{prop:C1_functions_anti-Lip} and Lemma~\ref{lem:anti-Lip_equivalent_conditions}, for every point $p \in M$, there exist an open neighborhood $U$ of $p$ and a constant $C > 0$ such that, for all future-directed causal vectors $X \in TU$,
	\[
		g(\nabla f, X) \geq C\norm{X}_h.
	\]
	By Lemma~\ref{lem:past_timelike}, it follows that $\nabla f$ is past-directed timelike on all of $M$, and therefore $f$ is a temporal function.
	
	Conversely, suppose that $f$ is a temporal function. Then $\nabla f$ is a continuous past-directed timelike vector field on $M$. By continuity, the collection $\{ (\nabla f)_p \}_{p \in M}$ is locally bounded away from zero. It then follows from Lemma~\ref{lem:bounded_away_zero}, Proposition~\ref{prop:C1_functions_anti-Lip}, and Lemma~\ref{lem:anti-Lip_equivalent_conditions} that $f$ is locally anti-Lipschitz.
\end{proof}

\clearpage
\section{Main Result}

The purpose of this section is to establish Theorem~\ref{thm:inequality}. We begin by analyzing a guiding example in the two-dimensional Minkowski spacetime.

\begin{example}
	\label{ex:Minkowski_inequality}
	Consider the temporal function $f(t,x) = t$ on the two-dimensional Minkowski spacetime $\mathbb{M}^2$. For any fixed $t \in \RR$, let $p,q \in M_t$ be two distinct points lying on the same level set
	\[
		M_t = f^{-1}(t) = \{ (t',x') \in \RR^2 \mid t' = t \},
	\]
	and let $d = \abs{ x_q - x_p }$ denote the Euclidean distance between their spatial components.	Since the induced Riemannian metric $h_t$ on $M_t$ coincides with the standard Euclidean metric, it follows that
	\[
		d_{h_t}(p,q) = d.
	\]
	Now consider the point 
	\[
		r = \left( t - \frac{d}{2}, \frac{x_p + x_q}{2} \right), 
	\]
	which lies at the intersection of the past null cones of $p$ and $q$. Define the piecewise causal curve $\beta$ as the concatenation of the null segments $[p, r]$ and $[r, q]$. Then 
	\[
		\hat{L}_f(\beta) = 2\abs{t_r - t} = d.
	\]
	Therefore,
	\[
		\frac{\hat{d}_f(p,q)}{d_{h_t}(p,q)} \leq \frac{\hat{L}_f(\beta)}{d_{h_t}(p,q)} = 1.
	\]
\end{example}

The key idea in the previous example is the construction of a piecewise causal curve whose null length equals the spatial distance between its endpoints. The strategy underlying the proof of Theorem~\ref{thm:inequality} is to asymptotically reproduce this construction, as formalized in the following lemma.

\begin{lemma}
	\label{lem:local_curve_construction}
	Let $(M,g)$ be a spacetime admitting a smooth temporal function $f \from M \to \RR$. Suppose there exist $t \in \RR$ and $C > 0$ such that the level set $M_t = f^{-1}(t)$ is nonempty and the equality 
	\[
		g(\nabla f, \nabla f) = -C^2
	\]
	holds at every point of $M_t$. Fix a point $p \in M_t$, and let $\gamma \from [0,L] \to M_t$ be a smooth unit-speed curve starting at $p$. Then, for every $\varepsilon \in (0,1)$ and every $s \in (0,L]$, there exists a piecewise causal curve $\beta_s$ from $p$ to $\gamma(s)$ such that\footnote{The curve $\beta_s$ depends on both $s$ and $\varepsilon$, but only the dependence on $s$ is made explicit to simplify the notation.}
	\[
		\lim_{\varepsilon \to 0^+} \lim_{s \to 0^+} \frac{\hat{L}_f(\beta_s)}{L_{h_t}(\at{\gamma}{[0,s]})} = C,
	\]
	where $h_t$ denotes the induced Riemannian metric on $M_t$ and $L_{h_t}$ is the associated length functional. Consequently,
	\[
		\lim_{s \to 0^+} \frac{\hat{d}_f(p, \gamma(s))}{L_{h_t}(\at{\gamma}{[0,s]})} \leq C.
	\]
\end{lemma}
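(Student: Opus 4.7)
My plan is to mimic the Minkowski V of Example~\ref{ex:Minkowski_inequality} by taking $\beta_s$ to be the concatenation of a past-directed timelike geodesic segment $\beta_s^1$ from $p$ to a vertex $r_s$ and a future-directed timelike segment $\beta_s^2$ from $r_s$ to $\gamma(s)$. The useful observation is that, for any such V with both legs causal,
\[
\hat{L}_f(\beta_s) = |f(r_s) - f(p)| + |f(\gamma(s)) - f(r_s)| = 2(t - f(r_s))
\]
independently of the specific parametrizations chosen, so everything reduces to placing $r_s$ with $f(r_s)$ as close to $t$ as possible while keeping $p, \gamma(s) \in J^+(r_s)$. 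The parameter $\varepsilon$ will be used to tilt the legs slightly into the timelike interior of the null cones, buying the causal slack needed for the second leg to exist in the curved setting.

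First I would fix an orthonormal basis $(e_0, e_1, \ldots, e_n)$ of $T_pM$ with $e_0 = -\nabla f_p/C$ future-directed unit timelike, $e_1 = \gamma'(0)$, and the remaining $e_i$ orthonormal in $T_pM_t$, then pass to normal coordinates $(y^0, \ldots, y^n)$ centered at $p$. In these coordinates $g_{ij}(y) = \eta_{ij} + O(|y|^2)$ and $\gamma(s) = (0, s, 0, \ldots, 0) + O(s^2)$. Setting
\[
X_\varepsilon := -e_0 + (1-\varepsilon)\,e_1, \qquad \tau(s) := \frac{s}{2(1-\varepsilon)},
\]
I take $\beta_s^1$ to be the $g$-geodesic from $p$ with initial velocity $X_\varepsilon$, run for parameter $\tau(s)$. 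Because $g(X_\varepsilon, X_\varepsilon) = -\varepsilon(2-\varepsilon) < 0$, the tangent of $\beta_s^1$ is past-directed timelike throughout, and in normal coordinates $\beta_s^1$ is the straight line landing at $r_s = \bigl(-\tfrac{s}{2(1-\varepsilon)},\, \tfrac{s}{2},\, 0, \ldots, 0\bigr)$; a Taylor expansion using $(f \circ \beta_s^1)'(0) = g(\nabla f_p, X_\varepsilon) = -C$ then yields $f(r_s) = t - \tfrac{Cs}{2(1-\varepsilon)} + O(s^2)$.

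The main obstacle is verifying the second leg. My candidate for $\beta_s^2$ is the coordinate straight segment from $r_s$ to $\gamma(s)$, whose (constant) coordinate velocity is
\[
\gamma(s) - r_s = \Bigl(\tfrac{s}{2(1-\varepsilon)},\ \tfrac{s}{2},\ 0, \ldots, 0\Bigr) + O(s^2),
\]
with Minkowski square norm $-\tfrac{\varepsilon(2-\varepsilon)}{4(1-\varepsilon)^2}\,s^2 + O(s^3)$, strictly negative of order $\varepsilon s^2$. Since $g_{ij} = \eta_{ij} + O(s^2)$ at every point of the segment, the $g$-square norm of this velocity differs from its $\eta$-square norm by an extra $O(s^4)$ contribution, which is negligible compared to $\varepsilon s^2$ once $s < s_0(\varepsilon)$; the positive $y^0$-component then makes the velocity future-directed timelike throughout. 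Hence $\beta_s := \beta_s^1 \ast \beta_s^2$ is a genuine piecewise causal curve from $p$ to $\gamma(s)$.

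To finish,
\[
\frac{\hat{L}_f(\beta_s)}{L_{h_t}(\at{\gamma}{[0,s]})} = \frac{2(t - f(r_s))}{s} = \frac{C}{1-\varepsilon} + O(s),
\]
so $\lim_{s \to 0^+}$ yields $C/(1-\varepsilon)$ and $\lim_{\varepsilon \to 0^+}$ yields $C$. The consequent upper bound on $\hat{d}_f$ follows from $\hat{d}_f(p,\gamma(s)) \le \hat{L}_f(\beta_s)$ by taking $\limsup_{s \to 0^+}$ at each fixed $\varepsilon$ and then sending $\varepsilon \to 0^+$.
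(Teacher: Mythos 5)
Your construction is correct and follows the same overall strategy as the paper: normal coordinates at $p$ aligned so that $e_0=-(\nabla f)_p/C$ and $e_1=\gamma'(0)$, a two-legged curve with vertex $r_s$ at depth of order $s/2$ below the level set, a parameter $\varepsilon$ buying causal slack, and a first-order Taylor expansion of $f$ to evaluate the null length (both limiting ratios, your $C/(1-\varepsilon)$ and the paper's $C/\sqrt{1-\varepsilon}$, tend to $C$). Where you genuinely diverge is in how causality of the legs is certified. The paper proves a separate comparison result, Lemma~\ref{lem:slim_Minkowski}: a compactness argument produces a neighborhood $U_\varepsilon$ of $p$, independent of $s$, on which every $\eta_\varepsilon$-causal curve is $g$-causal, and the legs are then taken to be exactly $\eta_\varepsilon$-null. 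You instead make both legs strictly $g$-timelike --- the first a $g$-geodesic with past-directed timelike initial velocity $X_\varepsilon$ (hence timelike throughout, since $g(\dot\beta,\dot\beta)$ is constant along geodesics), the second a coordinate segment whose Minkowski square norm is $-c_\varepsilon s^2+O(s^3)$ with $c_\varepsilon>0$, so that the $O(s^4)$ correction coming from $g_{ij}=\eta_{ij}+O(\abs{y}^2)$ cannot flip its sign once $s<s_0(\varepsilon)$. This is more self-contained (no auxiliary lemma, no compactness argument) at the cost of an $s$-dependent smallness threshold instead of a uniform neighborhood; both costs are harmless here because only the limit $s\to0^+$ matters, and the paper's own curve likewise lies in $U_\varepsilon$ only for small $s$. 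Two points worth making explicit in a polished version: future-directedness of the second leg should be justified by noting that a $g$-timelike vector near $p$ with positive $y^0$-component has $\eta$-product $-v^0<0$ with the future-directed vector $e_0$ and hence, by continuity of $g$ and of the time orientation, negative $g$-product with $\vartheta$; and for $s$ above the threshold one should fall back on Lemma~\ref{lem:null_distance_well-definiteness} to supply some piecewise causal curve, since the statement formally asks for $\beta_s$ for every $s\in(0,L]$.
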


To prove this lemma, we first establish a preliminary result showing that, within a sufficiently small normal coordinate neighborhood, piecewise causal curves for a suitably perturbed Minkowski metric are mapped to piecewise causal curves for the spacetime metric. 

For each $\varepsilon \in (0,1)$, define the \emph{$\varepsilon$-slim Minkowski metric} on $\RR^{n+1}$ by 
\[
	\eta_{\varepsilon} = -(1-\varepsilon)dt^2 + d(x^1)^2 + \dots + d(x^n)^2,
\]
and denote by $C_{\varepsilon}$ the corresponding causal cone:
\[
	C_{\varepsilon} = \{ v \in \RR^{n+1} \setminus \{ 0 \} \mid \eta_{\varepsilon}(v,v) \leq 0 \}. 
\]
We then have the following result.

\begin{lemma}
	\label{lem:slim_Minkowski}
	Let $(U,\varphi)$ be a normal coordinate chart centered at $p \in M$. Then, for every $\varepsilon \in (0,1)$, there exists an open neighborhood $U_{\varepsilon} \subseteq U$ of $p$ such that every piecewise $\eta_{\varepsilon}$-causal curve in $\varphi(U_{\varepsilon})$ is mapped by $\varphi^{-1}$ to a piecewise $g$-causal curve in $U_{\varepsilon}$.
\end{lemma}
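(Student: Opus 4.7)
The plan is a continuity and compactness argument exploiting the fact that, at $p$, the slim causal cone $C_\varepsilon$ is strictly interior to the standard Minkowski causal cone, with a quantitative gap depending on $\varepsilon$. I would first fix the normal coordinate chart so that the coordinate basis at $p$ is a time-oriented $g$-orthonormal frame; in particular $g_{ij}(p) = \eta_{ij}$ and $\partial_t|_p$ is future-directed timelike. Under this choice, the components of a tangent vector at $q \in U$ in the coordinate basis are identified with a vector $v \in \RR^{n+1}$, and for a curve $\tilde\beta$ in $\varphi(U)$ the pushforward $\beta = \varphi^{-1} \circ \tilde\beta$ has tangent vector with coordinate components equal to those of $\tilde\beta'$. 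The lemma thus reduces to finding an open neighborhood $V$ of $0 = \varphi(p)$ on which every $v \in C_\varepsilon$ is $g$-causal at every point of $V$, with its time orientation preserved.

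Next I would establish a quantitative estimate at $p$. Normalizing on the Euclidean unit sphere $S^n \subset \RR^{n+1}$, the condition $v \in C_\varepsilon$ together with $\sum_{i=0}^n (v^i)^2 = 1$ forces $(v^0)^2 \geq 1/(2-\varepsilon)$ and
\[
    \eta(v,v) \;\leq\; -\varepsilon (v^0)^2 \;\leq\; -\frac{\varepsilon}{2-\varepsilon} \;<\; 0.
\]
Hence every $\eta_\varepsilon$-causal vector is uniformly $\eta$-timelike on $S^n \cap C_\varepsilon$, and its time component $v^0$ is uniformly bounded away from zero, with sign determined by the future/past orientation.

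Finally, a compactness argument extends this inequality to a neighborhood of $p$. Since $S^n \cap C_\varepsilon$ is compact and $(q, v) \mapsto g_q(v,v)$ is jointly continuous with strictly negative values at $q = 0$, there exists an open neighborhood $V \subseteq \varphi(U)$ of the origin on which $g_q(v, v) < 0$ for every $v \in S^n \cap C_\varepsilon$; by homogeneity of the quadratic form in $v$, this extends to all of $C_\varepsilon$. For the time orientation, at $q = 0$ the continuous function $g_q(v, \vartheta_q)$ has the same sign as $-v^0$ by the orthonormality of the frame, and $|v^0|$ is bounded away from zero on $S^n \cap C_\varepsilon$; shrinking $V$ if necessary, this sign is preserved throughout $V$, so future-directed (respectively past-directed) $\eta_\varepsilon$-causal vectors are mapped to future-directed (respectively past-directed) $g$-causal vectors. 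Setting $U_\varepsilon = \varphi^{-1}(V)$ and applying these pointwise inclusions along each monotone piece of a piecewise $\eta_\varepsilon$-causal curve then gives the conclusion. The only substantive point to execute carefully is the joint continuity and compactness argument covering all of $V \times (S^n \cap C_\varepsilon)$, but this is routine once $V$ is chosen precompact.
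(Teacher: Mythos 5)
Your proof is correct and follows essentially the same route as the paper: both arguments rest on the strict inequality $g_0(v,v)<\eta_{\varepsilon}(v,v)\leq 0$ on the compact normalized cone $S^n\cap C_{\varepsilon}$, propagated to a neighborhood of the origin by continuity and compactness and then to all of $C_{\varepsilon}$ by homogeneity. The only difference is that you track the time orientation explicitly, whereas the paper leaves this implicit — harmlessly, since the image of each piece is $g$-timelike and a connected timelike curve automatically has a constant time orientation.
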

\begin{proof}	
	At the origin $0 \in \RR^{n+1}$, the pullback of $g$ under $\varphi^{-1}$ satisfies\footnote{We identify the tangent space at any point of $\RR^{n+1}$ with $\RR^{n+1}$ itself.}
	\[
		((\varphi^{-1})^*g)_0(v,v) = \eta(v,v) < \eta_{\varepsilon}(v,v) 
	\]
	for all $v \in C_{\varepsilon}$. In particular, the inequality holds on the compact set 
	\[
		D_{\varepsilon} = S^n \cap C_{\varepsilon},
	\]
	where $S^n$ denotes the unit sphere in $\RR^{n+1}$.	Define the function
	\[
		F \from \varphi(U) \times D_{\varepsilon} \to \RR, \quad F(x,v) = ((\varphi^{-1})^*g)_x(v,v) - \eta_{\varepsilon}(v,v).
	\]
	By continuity of $F$ and the fact that $F(0,v) < 0$ for all $v \in D_{\varepsilon}$, for every $v \in D_{\varepsilon}$ there exist open neighborhoods $U_v \subseteq \varphi(U)$ of $0$ and $V_v \subseteq D_{\varepsilon}$ of $v$ such that $F(x,w) < 0$ for all $(x,w) \in U_v \times V_v$. By compactness of $D_{\varepsilon}$, there exist finitely many points $v_1,\dots,v_N \in D_{\varepsilon}$ such that 
	\[
		D_{\varepsilon} \subseteq \bigcup_{i=1}^N V_{v_i}.
	\]
	For each $i=1,\dots,N$, let $r_{i} > 0$ be such that the Euclidean open ball $B_{r_i}(0)$ is contained in $U_{v_i}$, and define
	\[
		r_{\min} = \min_{1 \leq i \leq N} r_i, \quad U_{\varepsilon} = \varphi^{-1}( B_{r_{\min}}(0) ).
	\]
	Fix $x \in \varphi(U_{\varepsilon})$ and $w \in C_{\varepsilon}$. Then the normalized vector $\hat{w} = w/\abs{w}$ lies in $D_{\varepsilon}$, hence in some $V_{v_i}$, and $x \in B_{r_i}(0) \subseteq U_{v_i}$. Therefore,
	\[
		F(x,w) = \abs{w}^2 F(x, \hat{w}) < 0,
	\]
	which implies 
	\[
		((\varphi^{-1})^*g)_x(w,w) < \eta_{\varepsilon}(w,w).
	\]
	Thus, $\mathrm{d}_x(\varphi^{-1})$ maps $\eta_{\varepsilon}$-causal vectors to $g$-causal vectors. Consequently, $\varphi^{-1}$ maps piecewise $\eta_{\varepsilon}$-causal curves in $\varphi(U_{\varepsilon})$ to piecewise $g$-causal curves in $U_{\varepsilon}$.
\end{proof}

We now proceed with the proof of Lemma~\ref{lem:local_curve_construction}.

\begin{proof}[Proof of Lemma~\ref{lem:local_curve_construction}]
	Let $(U,\varphi)$ be a normal coordinate chart centered at $p$. Since $(\nabla f)_p$ is a past-directed timelike vector satisfying 
	\[
		g_p((\nabla f)_p, (\nabla f)_p) = -C^2,
	\]
	and $\gamma'(0)$ is a spacelike unit vector, we may, up to a Lorentz transformation, assume that
	\begin{equation}
		\label{eqn:e_0}
		\mathrm{d}_p\varphi((\nabla f)_p) = -Ce_0
	\end{equation}
	and
	\begin{equation}
		\label{eqn:e_1}
		\mathrm{d}_p\varphi(\gamma'(0)) = e_1,
	\end{equation}
	where $\{ e_i \}_{i=0}^n$ denotes the standard orthonormal basis of the Minkowski spacetime $\mathbb{M}^{n+1}$. Fix $\varepsilon \in (0,1)$, and let $U_{\varepsilon} \subseteq U$ be the open neighborhood provided by Lemma~\ref{lem:slim_Minkowski}. For each $s \in (0,L]$, define $q_s = \varphi(\gamma(s))$ and write $q_s = (t_s,x_s)$, where $t_s \in \RR$ and $x_s \in \RR^n$ denote the temporal and spatial components of $q_s$, respectively. Equation~\eqref{eqn:e_1} ensures that $\abs{x_s} \neq 0$ for sufficiently small $s$. Furthermore, the first-order Taylor expansion of $\varphi\circ\gamma$ at zero yields
	\begin{equation}
		\label{eqn:expansions}
		\abs{x_s} = s + o(s), \quad t_s = o(s) \quad \text{as }\,s \to 0^+.
	\end{equation}
	It follows that, for sufficiently small $s$, 
	\[
		\abs{x_s}^2 - (1-\varepsilon)t_s^2 \neq 0,
	\]
	ensuring that the quantity
	\[
		t^*_s = \frac{\abs{x_s}^2 - (1-\varepsilon)t_s^2}{2\sqrt{1-\varepsilon} \left(  \abs{x_s} - \sqrt{1-\varepsilon}\, t_s \right)}.
	\]
	is well-defined and nonzero. Consequently, for sufficiently small $s$, the point
	\[
		 r_s = \left( t^*_s, \sqrt{1-\varepsilon}\, t^*_s\frac{x_s}{\abs{x_s}} \right),
	\]
	is well-defined and nonzero. Substituting the expansions in~\eqref{eqn:expansions} into the expression for $t^*_s$ and simplifying by absorbing higher-order terms into $o(s)$, we find
	\[
		t^*_s = \frac{1}{2\sqrt{1 - \varepsilon}} (s + o(s)) \quad \text{as }\,s \to 0^+,
	\]
	and thus,
	\[
		\lim_{s \to 0^+} \frac{t^*_s}{s} = \frac{1}{2\sqrt{1 - \varepsilon}}.
	\]	
	A direct computation shows that the straight-line segments $[0, r_s]$ and $[r_s, q_s]$ are $\eta_{\varepsilon}$-null. Furthermore, the Euclidean norm of $r_s$ satisfies
	\begin{equation}
		\label{eqn:norm_rs}
		\abs{r_s} = \sqrt{2 - \varepsilon}\, \abs{t^*_s} = \frac{1}{2}\sqrt{\frac{2 - \varepsilon}{1 - \varepsilon}}\, \abs{ s + o(s) } \quad \text{as }\,s \to 0^+.
	\end{equation}
	Define the piecewise $\eta_{\varepsilon}$-null curve $\tilde{\beta}_s$ as the concatenation of the segments $[0, r_s]$ and $[r_s, q_s]$. Since $\varphi(U_{\varepsilon})$ is open and both $r_s$ and $q_s$ converge to $0$ as $s \to 0^+$, the curve $\tilde{\beta}_s$ is entirely contained in $\varphi(U_\varepsilon)$ for sufficiently small $s$. By Lemma~\ref{lem:slim_Minkowski}, the curve $\beta_s = \varphi^{-1}\circ\tilde{\beta}_s$ is piecewise $g$-causal in $U_{\varepsilon}$, and its null length satisfies
	\begin{align*}
		\hat{L}_f(\beta_s) &= 2\abs{ (f\circ\varphi^{-1})(r_s) - (f\circ\varphi^{-1})(0) } \\
		&= 2\abs{ (\nabla(f\circ\varphi^{-1}))_0 \cdot r_s + o(\abs{r_s}) } \quad \text{as }\,\abs{r_s} \to 0,
	\end{align*}
	where the second equality follows from the first-order Taylor expansion of $f\circ\varphi^{-1}$ at the origin. Moreover, using the definition of the gradient, equation~\eqref{eqn:e_0}, and the fact that $\varphi$ is a normal coordinate chart, we obtain
	\begin{align*}
		(\nabla(f\circ\varphi^{-1}))_0 \cdot r_s &= \mathrm{d}_0(f\circ\varphi^{-1})(r_s) = \mathrm{d}_pf ( \mathrm{d}_0\varphi^{-1}(r_s) ) \\
		&= g_p( (\nabla f)_p, \mathrm{d}_0\varphi^{-1}(r_s) ) = \eta( \mathrm{d}_p\varphi((\nabla f)_p), r_s ) \\
		&= \eta(-Ce_0, r_s) = Ct^*_s.
	\end{align*}
	It follows that
	\[
		\hat{L}_f(\beta_s) = 2\abs{ Ct^*_s + o(\abs{r_s}) } \quad \text{as }\,\abs{r_s} \to 0.
	\]
	Since $\gamma$ is unit-speed, we have $L_{h_t}(\at{\gamma}{[0,s]}) = s$. Therefore,
	\begin{equation}
		\label{eqn:ratio}
		\frac{ \hat{L}_f(\beta_s) }{ L_{h_t}(\at{\gamma}{[0,s]}) } = \frac{2\abs{ Ct^*_s + o(\abs{r_s}) }}{s} \quad \text{as }\,\abs{r_s} \to 0.
	\end{equation}
	Using~\eqref{eqn:norm_rs}, we deduce that
	\[
		\lim_{s \to 0^+} \frac{o(\abs{r_s})}{s} = 0.
	\]
	Taking the limits $s \to 0^+$ and then $\varepsilon \to 0^+$ in~\eqref{eqn:ratio}, we conclude that
	\[
		\lim_{\varepsilon \to 0^+} \lim_{s \to 0^+} \frac{ \hat{L}_f(\beta_s) }{ L_{h_t}(\at{\gamma}{[0,s]}) } = \lim_{\varepsilon \to 0^+} \lim_{s \to 0^+} \frac{2\abs{ Ct^*_s + o(\abs{r_s}) }}{s} = \lim_{\varepsilon \to 0^+} \frac{C}{\sqrt{1-\varepsilon}} = C.
	\]
\end{proof}

We now complete the argument by establishing Theorem~\ref{thm:inequality}.
\begin{proof}[Proof of Theorem~\ref{thm:inequality}]
    Let $\delta > 0$, and let $\gamma : [0,L] \to M_t$ be a smooth unit-speed curve from $p$ to $q$ satisfying
    \begin{equation}
    	\label{eqn:almost_min_curve}
        L_{h_t}(\gamma) < d_{h_t}(p,q) + \delta.
    \end{equation}
    By Lemma~\ref{lem:local_curve_construction}, for every $s \in [0,L]$, there exists $\varepsilon_s > 0$ such that, for all $s' \in [0,L]$ with $\abs{s - s'} < \varepsilon_s$, we have
    \[
        \hat{d}_f(\gamma(s),\gamma(s')) \leq (C+\delta)L_{h_t}(\at{\gamma}{[s,s']}).
    \]
    The collection of intervals $\{ (s - \varepsilon_s/2, s + \varepsilon_s/2) \}_{s \in [0,L]}$ forms an open cover of $[0,L]$. Using the Lebesgue's number lemma, we can find a partition $0 = s_0 < \dots < s_N = L$ such that, for each $i = 0, \dots, N-1$,
    \begin{equation}
    	\label{eqn:local_estimates}
        \hat{d}_f(\gamma(s_i),\gamma(s_{i+1})) \leq (C+\delta)L_{h_t}(\at{\gamma}{[s_i,s_{i+1}]}).
    \end{equation}
    Applying the triangle inequality together with estimates~\eqref{eqn:local_estimates} and~\eqref{eqn:almost_min_curve}, we obtain
    \begin{align*}
        \hat{d}_f(p,q) &\leq \sum_{i=0}^{N-1} \hat{d}_f(\gamma(s_i),\gamma(s_{i+1})) \\
        &\leq (C + \delta)\sum_{i=0}^{N-1} L_{h_t}(\at{\gamma}{[s_i,s_{i+1}]}) \\
        &= (C + \delta)L_{h_t}(\gamma) \\
        &< (C + \delta)(d_{h_t}(p,q) + \delta).
    \end{align*}
    The conclusion follows by letting $\delta \to 0$.
\end{proof}

As a direct consequence of Theorem~\ref{thm:inequality} and Example~\ref{ex:GRW_temporal_functions}, we obtain the following result.

\begin{corollary}
	In a GRW spacetime
	\[
		(M,g) = (I \times S, -dt^2 + f(t)^2h),
	\]
	consider a smooth temporal function of the form $\tau(t,x)= \phi(t)$, where $\phi \from I \to \RR$ satisfies $\phi'(t) > 0$ for all $t \in I$. For any $t \in I$, consider the level set $M_t = \tau^{-1}(t)$. Then, for every $p,q \in M_t$, 
	\[
		\hat{d}_{\tau}(p,q) \leq \phi'(t)d_{h_t}(p,q),
	\]
	where $h_t$ denotes the induced Riemannian metric on $M_t$ and $d_{h_t}$ is the corresponding distance function. 
\end{corollary}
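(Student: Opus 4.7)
The plan is to recognize this corollary as a direct specialization of Theorem~\ref{thm:inequality}, so the task reduces to verifying the two hypotheses of that theorem for the GRW temporal function $\tau(t,x) = \phi(t)$.

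First, I would invoke Example~\ref{ex:GRW_temporal_functions}, which already establishes that $\tau$ is a temporal function with gradient
\[
\nabla \tau = -\phi'(t)\,\partial_t.
\]
Smoothness of $\tau$ follows from smoothness of $\phi$, so $\tau$ qualifies as a smooth temporal function in the sense of Theorem~\ref{thm:inequality}.

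Next, I would compute the Lorentzian norm of the gradient. In a GRW spacetime the coordinate vector field $\partial_t$ satisfies $g(\partial_t, \partial_t) = -1$, so
\[
g(\nabla \tau, \nabla \tau) = (\phi'(t))^2\, g(\partial_t, \partial_t) = -(\phi'(t))^2.
\]
The key observation is that along the level set $M_t$ the coordinate value is fixed; consequently $\phi'(t)$ is a fixed positive real number on $M_t$, and the equality $g(\nabla\tau,\nabla\tau) = -C^2$ holds on $M_t$ with the constant $C = \phi'(t) > 0$.

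With both hypotheses verified, Theorem~\ref{thm:inequality} applies and immediately yields
\[
\hat{d}_{\tau}(p,q) \leq \phi'(t)\, d_{h_t}(p,q)
\]
for every $p,q \in M_t$. There is no substantive obstacle here: the entire content lies in the gradient computation carried out in Example~\ref{ex:GRW_temporal_functions}, and the corollary is otherwise a textbook application of the main theorem.
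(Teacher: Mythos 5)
Your proposal is correct and follows exactly the route the paper intends: the corollary is stated there as a direct consequence of Theorem~\ref{thm:inequality} and Example~\ref{ex:GRW_temporal_functions}, and your verification that $g(\nabla\tau,\nabla\tau) = (\phi'(t))^2 g(\partial_t,\partial_t) = -(\phi'(t))^2$ is constant on the slice $\{t\}\times S$ is precisely the missing computation. The only caveat is notational, inherited from the statement itself: one should read $M_t$ as the constant-$t$ coordinate slice (i.e.\ $\tau^{-1}(\phi(t))$) for the constant $C=\phi'(t)$ to come out as claimed, which is how you have interpreted it.
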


\clearpage
\section{Cosmological Time}

This section focuses on the notion of regular cosmological time, as introduced by Andersson, Galloway, and Howard in~\cite{AnderssonGallowayHoward1998}. It also presents the proof of Theorem~\ref{thm:initial_level_set}.

\subsection{Regular Cosmological Time}

The \emph{cosmological time function} $\tau_g \from M \to [0,\infty]$ is defined by
\[
	\tau_g(q) = \sup_{p \leq q} d_g(p,q).
\]
In general, the function $\tau_g$ is not necessarily well-behaved; for instance, it is identically infinite in Minkowski spacetime. In \cite{AnderssonGallowayHoward1998}, Andersson, Galloway, and Howard introduced a notion of regularity for the cosmological time function and proved that, under this condition, several important properties hold. A future-directed causal curve $\gamma \from (a,b) \to M$ is said to be \emph{past inextendible} if there exists no point $p \in M$ such that 
\[
	\lim_{s \to a^+} \gamma(s) = p.
\]
The cosmological time function $\tau_g$ is called \emph{regular} if it satisfies the following conditions:
\begin{enumerate}
	\item $\tau_g(q) < \infty$ for all $q \in M$;
	\item $\tau_g \to 0$ along all past inextendible future-directed causal curves.
\end{enumerate}

\begin{proposition}[{\cite[Thm.~1.2]{AnderssonGallowayHoward1998}}]
	\label{prop:generators}
	Suppose that the cosmological time function $\tau_g$ is regular. Then the following properties hold:
	\begin{enumerate}
		\item \label{item:1_generators} $\tau_g$ is a time function, and for all $x,y \in M$,
		\[
			x \leq y \implies \tau_g(y) - \tau_g(x) \geq d_g(x,y).
		\]
		\item The gradient $\nabla \tau_g$ exists almost everywhere.
		\item \label{item:3_generators} For every point $q \in M$, there exists a future-directed timelike unit-speed geodesic $\gamma_q \from (0,\tau_g(q)] \to M$ such that $\gamma_q(\tau_g(q)) = q$ and $\tau_g(\gamma_q(t)) = t$ for all $t \in (0,\tau_g(q))$. 
	\end{enumerate}
\end{proposition}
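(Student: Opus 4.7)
The plan is to prove the three parts in the order (3), (1), (2), since (3) provides the main technical tool—the existence of maximizing past-directed \enquote{generators}—that powers the strict monotonicity in (1), while (2) will follow from a separate regularity estimate.

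For (3), I would fix $q \in M$ and by definition of $\tau_g(q)$ select a sequence $p_n \leq q$ with $d_g(p_n, q) \to \tau_g(q)$ together with almost-maximizing future-directed causal curves $\gamma_n$ from $p_n$ to $q$. After a small perturbation these can be taken to be maximizing timelike geodesics (the degenerate case in which every curve from some $p_n$ to $q$ is null would force $\tau_g(q) = 0$, contradicting its definition once one uses the regularity hypothesis). Reparametrize each $\gamma_n$ by Lorentzian arc length so that $\gamma_n(\tau_g(q)) = q$ and its past endpoint has parameter $\tau_g(q) - d_g(p_n,q) \to 0$. The standard Lorentzian limit curve lemma then produces a subsequential limit $\gamma_q$ defined on any compact subinterval of $(0, \tau_g(q)]$, and the second regularity condition prevents the limit from terminating prematurely, so the domain extends to all of $(0, \tau_g(q)]$. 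Since uniform limits of timelike geodesics of fixed speed are timelike geodesics of that same speed, $\gamma_q$ is a future-directed unit-speed timelike geodesic ending at $q$. The identity $\tau_g(\gamma_q(t)) = t$ follows from two inequalities: the bound $\tau_g(\gamma_q(t)) \geq t$ comes from $d_g(\gamma_q(s),\gamma_q(t)) = t-s$ and letting $s \to 0^+$; the reverse is by contradiction, since any $p \leq \gamma_q(t)$ with $d_g(p,\gamma_q(t)) > t$ concatenated with $\gamma_q\vert_{[t,\tau_g(q)]}$ produces a causal curve from $p$ to $q$ of length exceeding $\tau_g(q)$.

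For (1), the inequality $\tau_g(y) - \tau_g(x) \geq d_g(x,y)$ for $x \leq y$ is a direct reverse triangle inequality: for any $p \leq x$, transitivity gives $p \leq y$ and $d_g(p,y) \geq d_g(p,x) + d_g(x,y)$, and taking the supremum over $p$ yields the claim. For the strict increase required to make $\tau_g$ a time function, suppose $\alpha$ is a future-directed causal curve from $x$ to a distinct $y$. Using (3), pick the generator $\gamma_x$ and note that $\gamma_x(s) \ll x$ for $s < \tau_g(x)$, hence $\gamma_x(s) \ll y$ by the usual push-up. Concatenating $\gamma_x\vert_{[s,\tau_g(x)]}$ with $\alpha$ produces a broken causal curve from $\gamma_x(s)$ to $y$ whose corner at $x$ has a timelike incoming branch, so the standard rounding argument yields a strictly longer timelike curve; letting $s \to 0^+$ gives $\tau_g(y) > \tau_g(x)$.

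For (2), the plan is to show $\tau_g$ is locally Lipschitz with respect to any auxiliary Riemannian metric $h$ and then invoke Rademacher's theorem in a chart. One direction of the local Lipschitz bound is the anti-Lipschitz estimate from (1) combined with a V-shape construction of a piecewise causal curve joining two nearby points (exactly as in Lemma~\ref{lem:null_distance_well-definiteness}), and the reverse direction uses that on precompact sets $\tau_g$ is bounded on causal diamonds by Lemma~\ref{lem:diamonds}\cref{item:1_diamonds} applied to a fixed diamond covering the set. The main obstacle I expect is (3), specifically verifying that the limit curve is defined on \emph{all} of $(0,\tau_g(q)]$ rather than on a proper subinterval: this is exactly the point where the second regularity condition genuinely bites, since without it the limit curve could drift off to infinity or accumulate at some $t_0 > 0$ without reaching $\tau_g$-level $0$.
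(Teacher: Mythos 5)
The paper does not prove this proposition at all: it is quoted as a black box from \cite{AnderssonGallowayHoward1998}, so your reconstruction is really being measured against the original Andersson--Galloway--Howard argument. Your overall architecture --- generators first, then strict monotonicity, then local Lipschitz regularity plus Rademacher --- is indeed theirs, but two steps as written do not go through.

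First, in (3) you cannot ``perturb'' almost-maximizing curves into maximizing timelike geodesics: maximizers from $p_n$ to $q$ need not exist before global hyperbolicity is known (AGH \emph{derive} global hyperbolicity from regularity rather than assume it). The standard route is to apply the limit curve lemma to the almost-maximizers themselves, parametrized by the arclength of an auxiliary Riemannian metric, and then show the limit is maximizing on every subsegment, hence an unbroken geodesic. Relatedly, the assertion that ``uniform limits of timelike geodesics of fixed speed are timelike geodesics of that same speed'' hides the crux: the unit timelike velocities at $q$ form a non-compact hyperboloid, so the limit curve could a priori be null --- and this is \emph{not} the degenerate case you dismiss (your $\gamma_n$ are timelike of length close to $\tau_g(q)$, yet their limit could still be null). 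One excludes this via upper semicontinuity of Lorentzian length under $C^0$ convergence, which forces the limit to have length at least $\tau_g(q)>0$. Second, in (2) the upper Lipschitz bound is not delivered by your sketch: boundedness of $\tau_g$ on diamonds (Lemma~\ref{lem:diamonds}) gives no rate, and to convert it into $\tau_g(y)-\tau_g(x)\le C\,d_h(x,y)$ you would need a diamond containing $x$ and $y$ whose height $\tau_g(q)-\tau_g(p)$ is itself $O(d_h(x,y))$ --- which is exactly the estimate being proved. AGH obtain it by a genuine local comparison with flat metrics of opened-up cones, combined with the generators from (3). Finally, a smaller point in (1): corner rounding yields a timelike curve but not automatically a strictly longer one; the clean way to get strictness for null-related $x\le y$ is to note that $\tau_g(y)=\tau_g(x)$ would force the concatenation of $\at{\gamma_x}{[s,\tau_g(x)]}$ with the null segment to be a maximizing causal curve, hence an unbroken timelike geodesic, which it is not.
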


A curve as in point~\cref{item:3_generators} above is called a \emph{generator} at $q$. The following result shows that, wherever it exists, the gradient of a regular $\tau_g$ is a past-directed timelike unit vector.

\begin{proposition}
	\label{prop:cosmo_time_unit_norm}
	Suppose that the cosmological time function $\tau_g$ is regular and differentiable at a point $q \in M$. Then there exists a unique generator $\gamma_q$ at $q$, and $(\nabla \tau_g)_q = -\gamma_q'(\tau_g(q))$. 
\end{proposition}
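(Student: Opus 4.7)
Let $w = (\nabla \tau_g)_q$, and fix any generator $\gamma_q$ at $q$, which exists by Proposition~\ref{prop:generators}~\cref{item:3_generators}. Set $v = \gamma_q'(\tau_g(q))$; then $v$ is a future-directed unit timelike vector. The identity $\tau_g(\gamma_q(t)) = t$ together with the chain rule at $t = \tau_g(q)$ immediately yields $g_q(w, v) = 1$. The strategy is to show that $w = -v$, from which the uniqueness of $\gamma_q$ follows by the standard uniqueness theorem for geodesics with prescribed initial data (two generators at $q$ would be past-directed unit-speed geodesics through $q$ with initial velocity $-v$, hence coincide).

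The next step will be to prove the lower bound $g_q(w, X) \geq 1$ for every future-directed unit timelike vector $X \in T_qM$. Given such $X$, I will pick a unit-speed timelike geodesic $\alpha_X$ with $\alpha_X(0) = q$ and $\alpha_X'(0) = X$. For small $s > 0$ one has $q \leq \alpha_X(s)$ and $d_g(q, \alpha_X(s)) \geq L_g(\at{\alpha_X}{[0,s]}) = s$. By Proposition~\ref{prop:generators}~\cref{item:1_generators}, $\tau_g(\alpha_X(s)) - \tau_g(q) \geq d_g(q, \alpha_X(s)) \geq s$; dividing by $s$ and letting $s \to 0^+$, differentiability of $\tau_g$ at $q$ gives $g_q(w, X) = (\tau_g \circ \alpha_X)'(0) \geq 1$.

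By Lemma~\ref{lem:time_functions_gradient}, $w$ is past-directed causal or zero. The case $w = 0$ is ruled out by $g_q(w, v) = 1$. If $w$ were past-directed null, then working in a time-orthonormal frame at $q$ and parametrizing the future unit hyperboloid by a rapidity parameter, a direct computation shows that $g_q(w, X)$ can be driven to $0$ as the rapidity tends to infinity, contradicting the bound of the previous paragraph. Hence $w$ is past-directed timelike. Setting $a = \sqrt{-g_q(w,w)} > 0$ and applying the reverse Cauchy--Schwarz inequality to the causal pair $(w, X)$, with the sign conventions of Lemma~\ref{lem:past_timelike}, I obtain $g_q(w, X) \geq a$ for every future-directed unit timelike $X$, with equality exactly when $X = -w/a$. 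Combining this with $g_q(w, X) \geq 1$ and the attained equality $g_q(w, v) = 1$ forces $a = 1$ and $v = -w/a = -w$; thus $w = -v$.

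The step I expect to be the main obstacle is the pointwise derivative bound $g_q(w, X) \geq 1$: it encodes the variational content of the cosmological time function via the ``$\tau_g$ dominates $d_g$'' inequality from Proposition~\ref{prop:generators}~\cref{item:1_generators}, and uses the differentiability of $\tau_g$ at $q$ in a decisive way. Everything else---the characterization via reverse Cauchy--Schwarz, the ruling out of the null case, and the uniqueness of $\gamma_q$---is then a matter of elementary Lorentzian linear algebra and the geodesic uniqueness theorem.
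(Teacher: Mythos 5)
Your proof is correct and follows essentially the same route as the paper: both arguments derive $g_q((\nabla\tau_g)_q, X) \geq 1$ for all future-directed unit timelike $X$ from the inequality $\tau_g(y)-\tau_g(x)\geq d_g(x,y)$ of Proposition~\ref{prop:generators}, obtain the equality $g_q((\nabla\tau_g)_q,\gamma_q'(\tau_g(q)))=1$ from the chain rule along the generator, and exclude the null case by boosting $X$ toward the light cone (your rapidity limit is exactly the paper's $A-\sqrt{A^2-1}\to 0$). The only cosmetic difference is that you package the final extremality step via the equality case of the reverse Cauchy--Schwarz inequality, where the paper works with the explicit decomposition $(\nabla\tau_g)_q=-au+bv$.
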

\begin{proof}
	Set $\tau = \tau_g$, and let $\gamma = \gamma_q$ be any generator at $q$. Define $u = \gamma'(\tau(q))$, which is a future-directed timelike unit vector at $q$. Then the gradient of $\tau$ at $q$ can be expressed as 
	\[
		(\nabla \tau)_q = -au + bv,
	\]
	where $v \in T_qM$ is a spacelike unit vector orthogonal to $u$, and $a,b \geq 0$. Since $\gamma$ is a generator at $q$, we have
	\[
		a = g((\nabla \tau)_q, u) = \frac{\mathrm{d}}{\mathrm{d}t}(\tau\circ\gamma) = \frac{\mathrm{d}}{\mathrm{d}t}(t) = 1.
	\]
	Thus, $(\nabla \tau)_q = -u + bv$. By Lemma~\ref{lem:time_functions_gradient}, the vector $(\nabla \tau)_q$ must be causal, which implies $0 \leq b \leq 1$. In particular,
	\[
		\norm{(\nabla \tau)_q}_g = \sqrt{1 - b^2} \leq 1.
	\]
	Now fix any future-directed timelike unit vector $V \in T_qM$, and let $\alpha \from (-\delta,\delta) \to M$ be a future-directed timelike unit-speed curve with $\alpha(0) = q$ and $\alpha'(0) = V$. By point~\cref{item:1_generators} of Proposition~\ref{prop:generators}, we obtain 
	\[
		g((\nabla \tau)_q, V) = \lim_{t \to 0^+} \frac{\tau(\alpha(t)) - \tau(\alpha(0))}{t} \geq \lim_{t \to 0^+} \frac{d_g(\alpha(0), \alpha(t))}{t} \geq 1.
	\]
	We now show that this forces $b = 0$, hence $(\nabla \tau)_q = -u$. First, suppose $b \neq 1$, so that $(\nabla \tau)_q$ is not null. Applying the inequality above to $V = -(\nabla \tau)_q / \norm{(\nabla \tau)_q}_g$ yields 
	\[
		\norm{(\nabla \tau)_q}_g = g((\nabla \tau)_q, V) \geq 1.
	\]
	Combining this with the earlier bound $\norm{(\nabla \tau)_q}_g \leq 1$, we conclude that $\norm{(\nabla \tau)_q}_g = 1$, and thus $b = 0$. Now suppose $b = 1$. For every $A \geq 1$, consider the future-directed timelike unit vector 
	\[
		V = Au - \sqrt{A^2 - 1}\, v.
	\]
	Then
	\[
		1 \leq g((\nabla \tau)_q, V) = A - \sqrt{A^2 - 1}.
	\]
	However, the right-hand side tends to $0$ as $A \to \infty$, yielding a contradiction. Therefore, the case $b = 1$ is excluded, and from the previous argument we deduce that $b = 0$.
	
	Finally, the uniqueness of the generator at $q$ follows from the uniqueness of geodesics with given initial data.
\end{proof}

\subsection{Initial Level Set}

As a consequence of Proposition~\ref{prop:cosmo_time_unit_norm}, a smooth regular cosmological time function $\tau_g$ is a smooth temporal function with unit gradient everywhere. Then the associated null distance $\hat{d}_{\tau_g}$ is a metric on $M$ that induces the manifold topology. Furthermore, by Lemma~\ref{lem:time_functions_1Lip}, $\tau_g$ is $1$-Lipschitz with respect to $\hat{d}_{\tau_g}$, and by Corollary~\ref{cor:extension_completion}, it admits a unique $1$-Lipschitz extension $\overline{\tau_g} \from \overline{M} \to [0,\infty)$ to the metric completion of $(M, \hat{d}_{\tau_g})$.

\begin{lemma}
	\label{lem:generators_1Lip}
	Suppose that the cosmological time function $\tau_g$ is smooth and regular. Then, for every $q \in M$, the unique generator $\gamma_q \from (0,\tau_g(q)] \to M$ at $q$ is $1$-Lipschitz with respect to $\hat{d}_{\tau_g}$. In particular, $\gamma_q$ admits a unique $1$-Lipschitz extension $\overline{\gamma}_q \from [0,\tau_g(q)] \to \overline{M}$.
\end{lemma}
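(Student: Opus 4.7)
The plan is to reduce the claim to the causality property of the null distance established in Lemma~\ref{lem:null_distance_causality_property}, and then invoke Corollary~\ref{cor:extension_completion} for the extension to the completion.

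First I would observe that $\gamma_q$ is by construction a future-directed timelike unit-speed geodesic satisfying $\tau_g(\gamma_q(t)) = t$ for all $t \in (0, \tau_g(q)]$. Hence for any $0 < s \leq t \leq \tau_g(q)$, the restriction $\gamma_q|_{[s,t]}$ is a future-directed causal curve from $\gamma_q(s)$ to $\gamma_q(t)$, so $\gamma_q(s) \leq \gamma_q(t)$. Applying Lemma~\ref{lem:null_distance_causality_property}, I get
\[
    \hat{d}_{\tau_g}(\gamma_q(s), \gamma_q(t)) = \tau_g(\gamma_q(t)) - \tau_g(\gamma_q(s)) = t - s,
\]
which (together with the symmetric case $t \leq s$) immediately yields $\hat{d}_{\tau_g}(\gamma_q(s), \gamma_q(t)) = |t - s|$ for all $s,t \in (0,\tau_g(q)]$. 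In particular $\gamma_q$ is $1$-Lipschitz with respect to $\hat{d}_{\tau_g}$.

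For the extension, I would view $\gamma_q$ as a $1$-Lipschitz map from the metric space $\bigl((0, \tau_g(q)], |\cdot|\bigr)$ into the complete metric space $\overline{M}$ (viewing $M \subseteq \overline{M}$ as in the discussion preceding the lemma). Since the metric completion of $(0, \tau_g(q)]$ is $[0, \tau_g(q)]$, Corollary~\ref{cor:extension_completion} provides a unique $1$-Lipschitz extension $\overline{\gamma}_q \from [0, \tau_g(q)] \to \overline{M}$, which is exactly the claim.

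There is no real obstacle here: the whole argument is essentially a one-line application of the causality property plus the abstract completion result. The only point requiring minor care is to make sure that $\gamma_q(s) \leq \gamma_q(t)$ for $s \leq t$, which follows because $\gamma_q$ is future-directed timelike and therefore causal. Nothing beyond what has already been set up in Sections 2 and 3 is needed.
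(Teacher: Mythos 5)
Your proposal is correct and follows essentially the same route as the paper: both reduce the Lipschitz bound to the fact that the generator is a future-directed causal curve with $\tau_g(\gamma_q(t)) = t$, and both conclude via Corollary~\ref{cor:extension_completion}. The only cosmetic difference is that you invoke the causality property (Lemma~\ref{lem:null_distance_causality_property}), which gives the exact equality $\hat{d}_{\tau_g}(\gamma_q(s),\gamma_q(t)) = \abs{t-s}$, whereas the paper bounds $\hat{d}_{\tau_g}$ directly by the null length of the restricted generator using Lemma~\ref{lem:null_distance_basic_properties}; these are the same underlying computation.
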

\begin{proof}
	For all $t,t' \in (0,\tau_g(q)]$, we have
	\[
		\hat{d}_{\tau_g}(\gamma_q(t), \gamma_q(t')) \leq \hat{L}_{\tau_g}(\at{\gamma_q}{[t,t']}) = \abs{ \tau_g(t) - \tau_g(t') } = \abs{t - t'}
	\]
	where we used Lemma~\ref{lem:null_distance_basic_properties} and the fact that $\gamma_q$ is a generator. This shows that $\gamma_q$ is $1$-Lipschitz with respect to $\hat{d}_{\tau_g}$. The existence and uniqueness of the extension then follow from Corollary~\ref{cor:extension_completion}.
\end{proof}

We are now ready to prove Theorem~\ref{thm:initial_level_set}.

\begin{proof}[Proof of Theorem~\ref{thm:initial_level_set}]
	We first prove that $\overline{\tau_g}^{-1}(0) \neq \emptyset$. Fix a point $q \in M$, and let $\gamma_q \from (0,\tau_g(q)] \to M$ be the unique generator at $q$. Consider a sequence $t_k \in (0,\tau(q)]$ with $t_k \to 0$. Then the sequence $\gamma_q(t_k) \in M$ is Cauchy with respect to $\hat{d}_{\tau_g}$: 
	\[
		\hat{d}_{\tau_g}(\gamma_q(t_k), \gamma_q(t_j)) \leq \abs{t_k - t_j} \to 0,
	\]
	where we used Lemma~\ref{lem:generators_1Lip}. Since $\overline{M}$ is complete, the sequence converges to a point $q_0 \in \overline{M}$. By continuity of $\overline{\tau_g}$ and the fact that $\gamma_q$ is a generator, we obtain
	\[
		\overline{\tau_g}(q_0) = \lim_k \tau_g(\gamma_q(t_k)) = \lim_k t_k = 0,
	\]
	which shows that $q_0 \in \overline{\tau_g}^{-1}(0)$. 
	
	We now prove that $\overline{\tau_g}^{-1}(0)$ consists of a single point. Since $\tau_g$ is a smooth temporal function with unit gradient everywhere, Theorem~\ref{thm:inequality} implies that for any $t > 0$ such that $M_t$ is nonempty, and for every $p,q \in M_t$, 
	\[
		\hat{d}_{\tau_g}(p,q) \leq d_{h_t}(p,q).
	\]
	In particular, it follows that
	\[
		\lim_{t \to 0^+} \diam_{\hat{d}_{\tau_g}}(M_t) = 0.
	\]
	Since, for sufficiently small $t > 0$, $M_t$ is a nonempty closed and bounded subset of $\overline{M}$, and $\overline{\tau_g}^{-1}(0)$ is nonempty and closed, Lemma~\ref{lem:diam_convergence} applies; hence, it suffices to show that $M_t$ converges to $\overline{\tau_g}^{-1}(0)$ in the Hausdorff sense as $t \to 0$. Indeed, in that case,
	\[
		\diam(\overline{\tau_g}^{-1}(0)) = \lim_{t \to 0^+} \diam_{\hat{d}_{\tau_g}}(M_t) = 0,
	\]
	which implies that $\overline{\tau_g}^{-1}(0)$ consists of a single point. For every $t > 0$ such that $M_t$ is nonempty, we have
	\[
		d_{H}( M_t, \overline{\tau_g}^{-1}(0) ) = \max\{ \sup\nolimits_{q_t \in M_t}\dist(q_t, \overline{\tau_g}^{-1}(0)),\ \sup\nolimits_{q_0 \in \overline{\tau_g}^{-1}(0)}\dist(q_0, M_t) \}.
	\]
	We estimate each term separately. Fix any point $q_t \in M_t$, and let $\gamma_{q_t} \from (0,t] \to M$ be the unique generator at $q_t$. Let $\overline{\gamma}_{q_t} \from [0,t] \to \overline{M}$ be its $1$-Lipschitz extension. Then
	\begin{equation}
		\label{eqn:t_estimate_1}
		\dist(q_t, \overline{\tau_g}^{-1}(0)) \leq \hat{d}_{\tau_g}(q_t, \overline{\gamma}_{q_t}(0)) = \hat{d}_{\tau_g}(\overline{\gamma}_{q_t}(t), \overline{\gamma}_{q_t}(0)) \leq t.
	\end{equation}
	Now fix any point $q_0 \in \overline{\tau_g}^{-1}(0)$, and let $q_i \in M$ be a sequence converging to $q_0$. Choose any point $q \in M_t$, and let $\gamma_q \from (0,t] \to M$ be its generator. Given $\varepsilon > 0$, choose $0 < \tilde{t} \leq t$ such that
	\[
		\diam_{\hat{d}_{\tau_g}}(M_{t'}) < \varepsilon/2 \quad \text{for all } 0 < t' \leq \tilde{t}.
	\]
	Since $q_i \to q_0$ and $\tau_g(q_i) \to 0$, we can select $i$ sufficiently large so that 
	\[
	\hat{d}_{\tau_g}(q_0,q_i) < \varepsilon/2 \quad \text{ and } \quad \tau_g(q_i) < \tilde{t}.
	\]
	Then
	\begin{align*}
		\dist(q_0, M_t) &\leq \hat{d}_{\tau_g}(q_0, q) \\
		&\leq \hat{d}_{\tau_g}(q_0, q_i) + \hat{d}_{\tau_g}(q_i, \gamma_q(\tau_g(q_i))) + \hat{d}_{\tau_g}(\gamma_q(\tau_g(q_i)), q) \\
		&< \varepsilon / 2 + \diam_{\hat{d}_{\tau_g}}(M_{\tau_g(q_i)}) + (t - \tau_g(q_i)) \\
		&< t + \varepsilon.
	\end{align*}
	Taking the limit $\varepsilon \to 0$, we conclude that
	\begin{equation}
		\label{eqn:t_estimate_2}
		\dist(q_0, M_t) \leq t.
	\end{equation}
	Combining estimates~\eqref{eqn:t_estimate_1} and~\eqref{eqn:t_estimate_2}, we obtain
	\[
		d_{H}( M_t, \overline{\tau_g}^{-1}(0) ) \leq t,
	\]
	which implies that $M_t$ converges to $\overline{\tau_g}^{-1}(0)$ in the Hausdorff sense as $t \to 0$.
\end{proof}

\clearpage
\section{Appendix}

\subsection{Deferred Proofs}	
\label{chap:Deferred_Proofs}
	
\begin{lemma*}[Lemma~\ref{lem:causal_timelike} (restated)]
	Let $X,T \in TM$ be tangent vectors at the same point, with $X$ causal and $T$ timelike. Then $g(X,T) \neq 0$.
\end{lemma*}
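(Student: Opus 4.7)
The plan is to argue by contradiction: assume $g(X,T)=0$ and derive that $X$ must be spacelike, contradicting causality. The key structural fact I want to exploit is that in a Lorentzian inner product space of signature $(1,n)$, the orthogonal complement of a timelike direction is a spacelike subspace.

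First I would fix the point $p$ and work in the tangent space $T_pM$, where $g_p$ is a nondegenerate symmetric bilinear form of signature $(1,n)$. Since $T$ is timelike, $g(T,T) < 0$, so the line $\mathbb{R}T$ is a nondegenerate subspace and we may orthogonally decompose $T_pM = \mathbb{R}T \oplus T^{\perp}$. By Sylvester's law (or just counting indices of the induced forms on each summand), the restriction of $g$ to $T^{\perp}$ must then be positive definite, otherwise the total signature of $g$ would not be $(1,n)$. In particular every nonzero vector of $T^{\perp}$ is spacelike.

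Now suppose $g(X,T)=0$. Then $X \in T^{\perp}$. Since $X$ is causal, by definition $X \neq 0$, so the previous step forces $g(X,X) > 0$, contradicting $g(X,X) \leq 0$. Hence $g(X,T) \neq 0$.

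The only mildly delicate point is the claim that $g|_{T^{\perp}}$ is positive definite; the cleanest justification is the signature computation just described, which is a standard linear algebra fact about nondegenerate forms restricted to complementary nondegenerate subspaces. As an alternative, one could instead invoke the reverse Cauchy--Schwarz inequality already stated in the paper: if $g(X,T)=0$ then $0 \geq g(X,X)g(T,T)$, and since $g(T,T)<0$ this forces $g(X,X) \geq 0$, so combined with $g(X,X)\leq 0$ one gets $g(X,X)=0$ with equality in reverse Cauchy--Schwarz, hence $X$ and $T$ are linearly dependent, and then $X = cT$ gives $g(X,X) = c^2 g(T,T) < 0$ unless $c=0$, both of which contradict the hypotheses on $X$. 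I would likely present the orthogonal decomposition proof for clarity, since it is more conceptual and does not rely on the equality case of the reverse Cauchy--Schwarz inequality.
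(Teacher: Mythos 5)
Your argument is correct, and your primary route is genuinely different from the paper's. The paper argues by contradiction in two stages: first it notes that $X$ cannot be timelike (two mutually orthogonal timelike vectors are impossible in signature $(1,n)$), so $X$ must be null; then, since $g(X,T)=0$ and $g(X,X)=0$ force equality in the reverse Cauchy--Schwarz inequality, $X$ and $T$ would be linearly dependent, which is absurd for a null and a timelike vector. Your decomposition $T_pM = \mathbb{R}T \oplus T^{\perp}$ with $g|_{T^{\perp}}$ positive definite handles the timelike and null cases in one stroke: any $X$ orthogonal to $T$ is spacelike or zero, so it cannot be causal. What your approach buys is self-containment and transparency --- it rests only on Sylvester's law for nondegenerate subspaces and does not invoke the equality case of reverse Cauchy--Schwarz, which is the more delicate part of that proposition. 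What the paper's approach buys is economy within its own framework, since reverse Cauchy--Schwarz is already stated and available. Your sketched alternative via reverse Cauchy--Schwarz is essentially the paper's proof (with the preliminary ``$X$ is null'' step derived from the inequality itself rather than from a separate signature observation), and it is also correct. Either version is acceptable; the one delicate point you flagged --- positive definiteness of $g|_{T^{\perp}}$ --- is indeed the standard index count and needs no further justification.
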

\begin{proof}
	Suppose, for contradiction, that $g(X,T) = 0$. Since $T$ is timelike and the metric $g$ has signature $(1,n)$, $X$ cannot be timelike and must therefore be null. Given that $g(X,T) = 0$ and $X$ is null, equality is attained in the reverse Cauchy--Schwarz inequality, which implies that $X$ and $T$ are linearly dependent. This is a contradiction, as $X$ is null and $T$ is timelike.
\end{proof}

\begin{lemma*}[Lemma~\ref{lem:past_timelike} (restated)]
	Let $p \in M$, and let $h_p$ be any Euclidean inner product on $T_pM$. Then the following conditions on a tangent vector $T \in T_pM$ are equivalent:
	\begin{enumerate}
		\item \label{item:1_past_timelike} $T$ is past-directed timelike.
		\item \label{item:2_past_timelike} For every future-directed causal vector $X \in T_pM$, $g(X,T) > 0$.
		\item \label{item:3_past_timelike} There exists a constant $C > 0$ such that, for all future-directed causal vectors $X \in T_pM$,
		\[
			g(T,X) \geq C\norm{X}_{h_p}.
		\]
	\end{enumerate}
\end{lemma*}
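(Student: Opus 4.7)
The plan is to prove the chain of implications \cref{item:1_past_timelike}$\Rightarrow$\cref{item:3_past_timelike}$\Rightarrow$\cref{item:2_past_timelike}$\Rightarrow$\cref{item:1_past_timelike}. The implication \cref{item:3_past_timelike}$\Rightarrow$\cref{item:2_past_timelike} is immediate: any future-directed causal vector $X$ is nonzero, hence $\norm{X}_{h_p} > 0$, so the bound in \cref{item:3_past_timelike} gives $g(T,X) \geq C\norm{X}_{h_p} > 0$.

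For \cref{item:1_past_timelike}$\Rightarrow$\cref{item:3_past_timelike}, the strategy is a compactness argument. The set
\[
    S = \{X \in T_pM \mid X \text{ is future-directed causal and } \norm{X}_{h_p} = 1\}
\]
is a closed subset of the $h_p$-unit sphere in $T_pM$, and therefore compact. The map $X \mapsto g(T,X)$ is continuous on $S$ and, by the expected equivalence with \cref{item:2_past_timelike}, strictly positive there. Taking $C$ to be the minimum of this map on $S$ and rescaling an arbitrary future-directed causal $X$ to unit $h_p$-norm gives the desired inequality. Strictly speaking I should not invoke \cref{item:2_past_timelike} here; instead I will note directly that for $T$ past-directed timelike and $X$ future-directed timelike, the reverse Cauchy--Schwarz inequality combined with the sign information coming from Lemma~\ref{lem:causal_timelike} applied to $T$ and the time orientation $\vartheta$ forces $g(T,X) > 0$, and then extend to future-directed null $X$ by continuity (the limit is nonzero by Lemma~\ref{lem:causal_timelike} again).

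The substantive implication is \cref{item:2_past_timelike}$\Rightarrow$\cref{item:1_past_timelike}, which I will prove by ruling out every causal character of $T$ other than past-directed timelike. The case $T = 0$ is trivial. If $T$ is null and future-directed, taking $X = T$ gives $g(X,T) = 0$, a contradiction. If $T$ is null and past-directed, then $X = -T$ is future-directed null and $g(X,T) = 0$, again a contradiction. If $T$ is future-directed timelike, then for the time orientation $\vartheta$ the reverse Cauchy--Schwarz inequality combined with the sign of $g(\vartheta,\vartheta)$ forces $g(\vartheta,T) < 0$, contradicting \cref{item:2_past_timelike}. The main obstacle is the spacelike case: here the approach is to work inside the two-plane $\mathrm{span}(\vartheta, T)$, which has signature $(1,1)$, and solve explicitly for the two roots $s_\pm$ of $g(\vartheta + sT, \vartheta + sT) = 0$. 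Setting $a = g(\vartheta,T)$, $b = g(\vartheta,\vartheta) < 0$, $c = g(T,T) > 0$, these are $s_\pm = (-a \pm \sqrt{a^2 - bc})/c$, with $bc < 0$ ensuring both roots are real and $\sqrt{a^2 - bc} > \abs{a}$. Both $N_\pm = \vartheta + s_\pm T$ are null, and both are future-directed because they arise as continuous deformations of $\vartheta$ through timelike vectors. A short computation gives $g(T, N_\pm) = \pm\sqrt{a^2 - bc}$, so $N_-$ is a future-directed causal vector with $g(T, N_-) < 0$, contradicting \cref{item:2_past_timelike}. This completes the cycle.
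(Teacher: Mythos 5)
Your proof is correct, but it is organized differently from the paper's and uses different micro-arguments in two places, so a comparison is worth recording. You close the cycle as (1)$\Rightarrow$(3)$\Rightarrow$(2)$\Rightarrow$(1), whereas the paper proves (1)$\Rightarrow$(2), (2)$\Rightarrow$(3) and (3)$\Rightarrow$(1) separately. For (1)$\Rightarrow$(3) you take a positive minimum of $X \mapsto g(T,X)$ over the compact set $S$ of $h_p$-unit future-directed causal vectors; the paper instead obtains (1)$\Rightarrow$(2) by a perturbation trick (if $g(X,T)<0$ for some future-directed causal $X$, then $T' = T + \lambda\vartheta$ with a suitable $\lambda<0$ is timelike and $g$-orthogonal to $X$, contradicting Lemma~\ref{lem:causal_timelike}) and then (2)$\Rightarrow$(3) by a sequential compactness contradiction --- essentially your extremum argument in contrapositive form. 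Your justification of positivity on the timelike part of $S$ (reverse Cauchy--Schwarz plus the sign anchored at $\vartheta$) is the standard connectedness-of-the-cone argument and is sound, though the paper's perturbation is tighter and avoids the separate limiting step for the null boundary; if you keep your version, you should note explicitly that every future-directed null vector is a limit of future-directed timelike ones (e.g.\ $X+\epsilon\vartheta$) and that $S$ is closed because $g(\cdot,\vartheta)\neq 0$ on unit causal vectors by Lemma~\ref{lem:causal_timelike}. For the converse direction the paper argues from (3) and merely asserts that a zero, null or spacelike $T$ admits an orthogonal future-directed causal vector, while you argue from (2) and exhibit explicit witnesses, in particular the future-directed null vector $N_- = \vartheta + s_- T$ with $g(T,N_-) = -\sqrt{a^2-bc} < 0$ in the spacelike case; your computation is more explicit and actually produces a strictly negative pairing rather than a vanishing one. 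Neither difference is a gap.
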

\begin{proof}
	\cref{item:1_past_timelike}$\implies$\cref{item:2_past_timelike}. By Lemma~\ref{lem:causal_timelike}, for every future-directed causal vector $X \in T_pM$, we have $g(X,T) \neq 0$. Suppose, for contradiction, that $g(X,T) < 0$ for some such $X$. Let $\vartheta$ denote the time orientation, and define
	\[
		T' = T + \lambda\vartheta,
	\]
	where $\lambda < 0$ is chosen so that 
	\[
		g(X,T') = g(X,T) + \lambda g(X,\vartheta) = 0.
	\]
	Then $T'$ is timelike and $g(X,T') = 0$, contradicting Lemma~\ref{lem:causal_timelike}.
	
	\cref{item:2_past_timelike}$\implies$\cref{item:3_past_timelike}. Suppose, for contradiction, that the conclusion fails. Then there exists a sequence of future-directed causal vectors $X_j \in T_pM$ such that 
	\[
		g(T,X_j) < j^{-1}\norm{X_j}_{h_p}. 
	\]
	By rescaling and passing to a subsequence if necessary, we may assume that $\norm{X_j}_{h_p} = 1$ for all $j$ and that $X_j \to X_0$ for some future-directed causal vector $X_0 \in T_pM$. Passing to the limit in the inequality yields
	\[
		g(T,X_0) = 0,
	\]
	which contradicts~\cref{item:2_past_timelike}. 
	
	\cref{item:3_past_timelike}$\implies$\cref{item:1_past_timelike}. Suppose, for contradiction, that $T$ is either zero, null, or spacelike. 
	Then there exists a future-directed causal vector $X \in T_pM$ such that $g(T,X) = 0$, contradicting the inequality in~\cref{item:3_past_timelike}.  Hence, $T$ must be timelike. Moreover, since the time orientation $\vartheta$ is future-directed timelike, the inequality
	\[
		g(T,\vartheta) \geq C\norm{\vartheta}_h > 0
	\]
	implies that $T$ is past-directed.
\end{proof}

\begin{lemma*}[Lemma~\ref{lem:time_functions_gradient} (restated)]
	Let $\tau$ be a time function on $M$. If $\tau$ is differentiable at a point $p \in M$, then the gradient vector $(\nabla \tau)_p$ is either past-directed causal or zero.
\end{lemma*}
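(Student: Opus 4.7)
The plan is a two-step argument: first, to establish that $g((\nabla\tau)_p, X) \ge 0$ for every future-directed causal $X \in T_pM$; second, to rule out, by contradiction, the three classes of vectors inconsistent with the desired conclusion.

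For the first step, given a nonzero future-directed causal $X$, I would take $\gamma(t) = \exp_p(tX)$, a smooth geodesic defined on some interval $[0,\delta)$. Since $g(\gamma',\gamma')$ is constant along geodesics and equals $g(X,X) \le 0$, the curve $\gamma$ is causal, and, after shrinking $\delta$ if necessary, future-directed by continuity of the time orientation. Strict monotonicity of $\tau$ along $\gamma$ then gives $\tau(\gamma(t)) \ge \tau(p)$ for $t \in [0,\delta)$. Since $\tau$ is differentiable at $p$ and $\gamma$ is smooth, the chain rule yields
\[
g((\nabla\tau)_p, X) \;=\; \mathrm{d}_p\tau(X) \;=\; (\tau\circ\gamma)'(0) \;\ge\; 0.
\]
The case $X=0$ being trivial, the inequality holds for every future-directed causal $X$.

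For the second step, suppose for contradiction that $T := (\nabla\tau)_p$ is nonzero and not past-directed causal, so that it falls into exactly one of three cases. If $T$ is future-directed timelike, then $-T$ is past-directed timelike, and Lemma~\ref{lem:past_timelike} gives $g(X, -T) > 0$ for every future-directed causal $X$, contradicting step one. If $T$ is future-directed null, take $X = \vartheta_p$, which is future-directed timelike; applying Lemma~\ref{lem:past_timelike} to the past-directed timelike vector $-\vartheta_p$ yields $g(T, -\vartheta_p) > 0$, i.e.\ $g(T, \vartheta_p) < 0$, again contradicting step one. Finally, if $T$ is spacelike, the hyperplane $T^\perp$ inherits an inner product of Lorentzian signature $(1, n-1)$ and so contains a future-directed timelike vector $V$ (obtained, for instance, by projecting $\vartheta_p$ onto $T^\perp$); for sufficiently small $\varepsilon > 0$, $X := V - \varepsilon T$ remains future-directed timelike by openness of the future timelike cone, and
\[
g(X, T) \;=\; -\varepsilon\, g(T, T) \;<\; 0,
\]
since $g(T,T) > 0$, contradicting step one once more.

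The main technical subtlety lies in step one, where one needs a smooth future-directed causal curve through $p$ realizing an arbitrary prescribed future-directed causal initial velocity, including the delicate null case, in which a naive constant-coefficient extension of $X$ in a coordinate chart could generally fail to remain causal nearby; the geodesic $\exp_p(tX)$ resolves this uniformly, since causality is preserved along geodesics. The case analysis in step two is then essentially routine, resting only on Lemma~\ref{lem:past_timelike} and elementary properties of the Lorentzian inner product.
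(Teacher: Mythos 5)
Your proof is correct and follows essentially the same route as the paper's: first establish $g((\nabla\tau)_p, X) \geq 0$ for future-directed test vectors via a curve through $p$, then rule out the spacelike case by pairing against a small timelike perturbation of a vector orthogonal to $(\nabla\tau)_p$ (your $V - \varepsilon T$ is the paper's $T' = T + \lambda(\nabla\tau)_p$), and dispose of the future-directed causal cases using Lemma~\ref{lem:past_timelike}. The geodesic device for null initial velocities in step one is sound but not actually needed, since every contradiction in your step two is derived from a \emph{timelike} test vector.
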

\begin{proof}
	We first claim that for every future-directed timelike vector $T \in T_pM$, $g((\nabla \tau)_p, T) \geq 0$. Indeed, let $\gamma \from (-\delta,\delta) \to M$ be a future-directed timelike curve with $\gamma(0) = p$ and $\gamma'(0) = T$. Since $\tau$ is a time function, the composition $\tau\circ\gamma$ is strictly increasing. Hence, its derivative at zero satisfies $(\tau\circ\gamma)'(0) \geq 0$. This derivative equals $\mathrm{d}_p\tau(T) = g((\nabla \tau)_p, T)$, so the claim follows.
	
	Suppose, for contradiction, that $(\nabla \tau)_p$ is spacelike, that is, $g((\nabla \tau)_p, (\nabla \tau)_p) > 0$. Choose a future-directed timelike vector $T \in T_pM$ such that $g(T, (\nabla \tau)_p) = 0$, and define
	\[
		T' = T + \lambda(\nabla \tau)_p,
	\]
	where $\lambda < 0$ is chosen so that
	\[
		g(T,T) + \lambda^2g((\nabla \tau)_p, (\nabla \tau)_p) < 0.
	\]
	Let $\vartheta$ denote the time orientation. Then:
	\begin{enumerate}
		\item $T'$ is timelike, since
		\begin{align*}
			g(T',T') &= g(T + \lambda (\nabla \tau)_p, T + \lambda (\nabla \tau)_p) \\
			&= g(T,T) +2\lambda g(T, (\nabla \tau)_p) + \lambda^2 g((\nabla \tau)_p, (\nabla \tau)_p) \\
			&= g(T,T) + \lambda^2 g((\nabla \tau)_p, (\nabla \tau)_p) \\
			&< 0.
		\end{align*}
		\item $T'$ is future-directed, since
		\[
			g(T', \vartheta) = g(T, \vartheta) + \lambda g((\nabla \tau)_p, \vartheta) < 0,
		\]
		where we used that $g(T, \vartheta) < 0$ because $T$ is future-directed, and that $g((\nabla \tau)_p, \vartheta) \geq 0$ by the initial claim.
	\end{enumerate}
	However,
	\begin{align*}
		g((\nabla \tau)_p, T') &= g((\nabla \tau)_p, T) + \lambda g((\nabla \tau)_p, (\nabla \tau)_p) \\
		&= \lambda g((\nabla \tau)_p, (\nabla \tau)_p) \\
		&< 0,
	\end{align*}
	which contradicts the initial claim. Hence, $(\nabla \tau)_p$ cannot be spacelike and must be either causal or zero. Finally, if $(\nabla \tau)_p$ is causal, then it must be past-directed since $g((\nabla \tau)_p, \vartheta) \geq 0$.
\end{proof}

\clearpage
\printbibliography

\end{document}